\appto\appendix{\addtocontents{toc}{\protect\setcounter{tocdepth}{0}}}
\newcommand{\K}{\mathbb K}
\newcommand{\bbL}{{\mathbb L}}
\newcommand{\bbP}{{\mathbb P}}
\newcommand{\cB}{{\mathcal B}}
\newcommand{\cF}{{\mathcal F}}
\newcommand{\cR}{{\mathcal R}}
\newcommand{\cS}{{\mathcal S}}
\newcommand{\N}{\mathbb N}
\newcommand{\Z}{\mathbb Z}
\newcommand{\Q}{\mathbb Q}
\newcommand{\R}{\mathbb R}
\newcommand{\C}{\mathbb C}
\newcommand{\ra}{\rightarrow}
\newcommand{\lra}{\longrightarrow}
\renewcommand{\1}{\mathbbm{1}}
\newcommand{\inv}{^{-1}}
\DeclareMathOperator{\Exp}{Exp}
\DeclareMathOperator{\id}{id}
\DeclareMathOperator{\Aut}{Aut}
\DeclareMathOperator{\Ext}{Ext}
\DeclareMathOperator{\Int}{Int}
\DeclareMathOperator{\Frac}{Frac}
\DeclareMathOperator{\im}{im}
\DeclareMathOperator{\lex}{lex}
\DeclareMathOperator{\Hom}{Hom}
\newcommand{\vAut}{v\text{-}\!\Aut}
\newcommand{\oAut}{o\text{-}\!\Aut}
\newcommand{\uAut}{1\text{-}\!\Aut}
\newcommand{\pExt}{\Psi\text{-}\!\Ext}
\DeclareMathOperator{\Str}{+}
\newcommand{\gexp}[1]{#1\text{-}\!\Exp}
\DeclareMathOperator{\Cr}{Cr}
\DeclareMathOperator{\PGL}{PGL}
\newcommand{\pow}[1]{\!\left(\!\left( #1 \right)\!\right)}
\newcommand{\setbr}[1]{\left\{ #1 \right\}}
\newcommand{\brackets}[1]{\left( #1 \right)}
\DeclareMathOperator*\supp{supp}
\DeclareMathOperator*\Supp{Supp}
\newcommand{\molt}{\times}
\DeclareMathOperator{\Char}{char}
\newcommand{\func}[5]{
	\begin{center}
		\begin{tabular}{rccc}
			$#1\colon$ &  $#2$ &$\longrightarrow$ & $#3$\\
			& $#4$ & $\longmapsto$ & $#5$
		\end{tabular}
	\end{center}
}
\renewcommand{\epsilon}{\varepsilon}
\renewcommand{\phi}{\varphi}
\renewcommand{\theta}{\vartheta}
\theoremstyle{definition}
\newtheorem{defn}{Definition}[subsection]
\newtheorem{es}[defn]{Example}
\newtheorem{ess}[defn]{Examples}
\newtheorem{notation}[defn]{Notation}
\theoremstyle{plain}
\newtheorem{lemma}[defn]{Lemma}
\newtheorem*{lemma*}{Lemma}
\newtheorem{prop}[defn]{Proposition}
\newtheorem*{prop*}{Proposition}
\newtheorem{theorem}[defn]{Theorem}
\newtheorem*{teorema*}{Theorem}
\newtheorem{cor}[defn]{Corollary}
\newtheorem{rmk}[defn]{Remark}
\newtheorem*{cor*}{Corollary}
\lbrace\begin{array}{@{}l@{}}}%
\newcommand{\vertiii}[1]{{\left\vert\kern-0.25ex\left\vert\kern-0.25ex\left\vert #1 
		\right\vert\kern-0.25ex\right\vert\kern-0.25ex\right\vert}}
\newcounter{nootje}
\numberwithin{equation}{section}
\begin{document}

\begin{abstract}
	Let $ k $ be a field, $ G $ a totally ordered abelian group and $ \K = k\pow{G} $ the maximal field of generalised power series, endowed with the canonical valuation $ v $ \cite{hahn}.
	We study the group $ \vAut K $ of valuation preserving automorphisms of a subfield $ k(G)\subseteq K\subseteq \K $, where $ k(G) $ is the fraction field of the group ring $ k[G] $.
	Under the assumption that $ K $ satisfies two\textit{ lifting properties} we are able to generalise and refine the decomposition of $ \vAut\K $ \cite{hofbergerthesis} and prove a structure theorem decomposing $ \vAut K $ into a 4-factor semi-direct product of notable subgroups.
	We then identify a large class of Hahn fields satisfying the two aforementioned lifting properties.
	Next we focus on the group of \textit{strongly additive} automorphisms of $ K $.
	We give an explicit description of the group of strongly additive \emph{internal} automorphisms in terms of the groups of homomorphisms $ \Hom(G,k^\times) $  of $ G $ into $ k^\times $ and $ \Hom(G,1+I_K) $ of $ G $ into the group of $ 1 $-units of the valuation ring of $ K $.
	Finally, we specialise our results to some relevant special cases. In particular, we extend the work of Schilling \cite{schilling44} on the field of Laurent series and that of \cite{deschamps:puiseux} on the field of Puiseux series. 
\end{abstract}

\title[Automorphisms of fields of generalised power series]{The automorphism group of a valued field \\of generalised formal power series}

\author[S.~Kuhlmann]{Salma Kuhlmann}
\author[M.~Serra]{Michele Serra}

\address{Fachbereich Mathematik und Statistik, Universität Konstanz, 78457 Konstanz, Germany}
\email{salma.kuhlmann@uni-konstanz.de}

\address{Fachbereich Mathematik und Statistik, Universität Konstanz, 78457 Konstanz, Germany}
\email{michele.serra@uni-konstanz.de}

\date{\today \\ \textup{2020} \textit{Mathematics Subject Classification}: 13J05 (12J20 16W60 06F20)}


\maketitle


	\section{Introduction}
	In his paper \cite{schilling44}, Schilling studied the $ k $-automorphisms of the field $ \bbL=k\pow{\Z} $ of formal Laurent series over a coefficient field $ k $. First, he observed \cite[Lemma~1]{schilling44} that all $ k $-automorphisms are necessarily valuation preserving.
	Moreover, since the ordered abelian group $ \Z $ admits only  the trivial automorphism, all $ k $-automorphisms of $ \bbL $ are 
	\emph{internal} (Definition~\ref{def-int-field}).
	
	Schilling then proves that the group of $ k $-automorphisms is isomorphic to the group of units of the valuation ring of $ \bbL $ endowed with a particular group operation (see Section~\ref{subsect-laurent-series} for more details). 
	
	In this paper, inspired by Schilling's ideas, we study the group $ \vAut K$ of \emph{valuation preserving automorphisms} of a Hahn field $ K $, and its subgroups $\vAut_{(k)} K,$ $\vAut_k K $ of $ k $\emph{-stable} resp.\ $ k $\emph{-automorphisms} (Notation~\ref{not:Aut-groups}\eqref{not:sub-v-Aut}). \emph{Hahn fields} are distinguished subfields of fields of generalised power series $ \K=k\pow{G} $ for a field $ k $ and a totally ordered abelian group $G$ (Definition~\ref{def:hahn-field}).
	The group $ \Int\Aut K $ of internal automorphisms is an important normal subgroup of $ \vAut K $.
	In order to further analyse $ \vAut K $, we impose an additional assumption on $ K $.
	A Hahn field $ K $ such that every pair $ (\rho,\tau)\in\Aut k \times \oAut G $ lifts to an automorphism of $ K $ is said to have the \emph{first lifting property} (Notation~\ref{not:Aut-groups} and Definition~\ref{def-lifting-fields}). This allows us to identify another important subgroup, namely the group $ \Ext\Aut K $ of \emph{external} valuation preserving automorphisms (Definition~\ref{def-ext-field}).
	
	A result of Hofberger {\cite[Satz 2.2]{hofbergerthesis}} allows to present the group $ \vAut\K $ of the \emph{maximal} Hahn field $ \K $ as the semi-direct product $ \Int\Aut\K \rtimes\Ext\Aut\K $. 
	In Theorem~\ref{hofberger-semi-direct} we generalise Hofberger's result and show that all Hahn fields which satisfy the first lifting property admit Hofberger's decomposition.
	
	We further refine Hofberger's result by first noticing that the group $ \Ext\Aut K $ is isomorphic to the direct product $ \Aut k\times \oAut G $. A detailed study of the factor $ \oAut G $ is provided in \cite{droste-goebel} and generalised in \cite{ours-groups}.
	
	Secondly, we decompose the group $ \Int\Aut K $ (Theorem~\ref{hofberger-enhanced}) of a Hahn field $ K $ which satisfies the \emph{second lifting property} (Definition~\ref{def:G-exp-closed}) by identifying two further subgroups of fundamental importance: the group $ \uAut K $
	of \emph{1-automorphisms} and the group $ \gexp{G}K $ of $ G $\emph{-exponentiations} (Definitions~\ref{def:1-aut} and \ref{def:g-exp}).
	In Corollary~\ref{cor:1-Aut-determines-vAut} we deduce that $ \vAut K $
	is completely determined, up to isomorphism, by $ \uAut K $. 
	Similar results are obtained for $ \vAut_{k}K $ and $ \vAut_{(k)}K $.
	
	An interesting large class of Hahn fields is that of \emph{Rayner fields} (Definition~\ref{def:rayner}). 
	We characterise Rayner fields which fulfil the \emph{canonical first lifting property} (Definition~\ref{def:CLP}) and prove that all Rayner fields satisfy the second lifting property.
	We follow up our study on Rayner fields in \cite{KKS-rayner-structures} and \cite{KKS-linear-recurrence}.
	
	Pursuing our generalisation of Schilling's results, we study the group $ \vAut^{\Str} K$ of \emph{strongly additive} (Definition~\ref{def-strongly-linear}) valuation preserving automorphisms (and its subgroups $\vAut_{(k)}^{\Str} K$,  $ \vAut_k^{\Str} K $) of a Hahn field $ K $.	
	In particular, in Theorem~\ref{thm:main} we describe $ \vAut_{(k)}^{\Str}K $, for a Hahn field $ K $ which satisfies the first and second lifting property, in terms of the valuation invariants of $ K $.
	Finally, to illustrate our results, 
	we provide a detailed study of two special cases. 
	If the group $ G $ is finitely generated we obtain a generalisation of a result of \cite{schilling44} on the field
	$ \bbL $ of Laurent series.
	If $ G $ is divisible and finite dimensional, we obtain increasingly precise results depending on our assumptions on $ k $.
	For the field $ \bbP $ of Puiseux series we find descriptions of $ \vAut_{(k)}\bbP $ and $ \vAut_{k}\bbP $ that generalise results of \cite{deschamps:puiseux}.
	
	\smallskip	
	The paper is organised as follows.
	In Section~\ref{sec:definitions} we introduce the fundamental definitions and establish some notation that will be used throughout the paper. In this section Example~\ref{es:counter-non-arch} is of particular interest: we show that if the coefficient field $ k $ is not archimedean, order preserving automorphisms need not be valuation preserving.
	
	In Section~\ref{section:decomposition} we introduce two notions of lifting property, which allow us to obtain several decomposition theorems.
	
	\noindent
	Subsection~\ref{sec:lifting-property} is devoted to the study of the first lifting property (Definition~\ref{def-lifting-fields}). For a Hahn field $ K $ satisfying the first lifting property, we introduce two important subgroups of $ \vAut K $: the normal subgroup $ \Int\Aut K $ and its complement $ \Ext\Aut K $ (Definitions~\ref{def-int-field} and \ref{def-ext-field}).
	
	\noindent
	The main result of Subsection~\ref{subsec:Hofberger} is Theorem~\ref{hofberger-semi-direct}, which generalises Hofberger's decomposition to any Hahn field with the first lifting property. This will be the starting point for a much deeper investigation of the structure of the automorphism group.
	
	\noindent
	In Subsection~\ref{subsec:internal} we introduce the canonical second lifting property.
	For a Hahn field $ K $ satisfying the canonical second lifting property, we further introduce two important subgroups of $ \Int\Aut K $: the normal subgroup $ \uAut K $ of $ 1 $-auto\-mor\-phisms and its complement, the group $ \gexp{G}K $ of $ G $-exponentiations (Definition~\ref{def:G-exp}). 
	
	\noindent
	In Subsection~\ref{subsec:intern-decomp} we provide a decomposition of $ \Int\Aut K $ into a semi-direct product of the subgroups $ \uAut K $ and $ \gexp{G}K $, for a Hahn field $ K $ satisfying the  canonical second lifting property.
	
	\noindent
	Subsection~\ref{sec:CLP} is dedicated to a special version of the first lifting property: the canonical first lifting property.
	This allows to simplify the decomposition of the group $ \vAut_{(k)}K $ of $ k $-stable automorphism, as will be shown in Subsection~\ref{sec:decomp-general}.
	
	\noindent
	In Subsection~\ref{subsec:rayner} we present the class of Rayner fields, first introduced in \cite{rayner1968}. All these fields satisfy the canonical second lifting property and include, for example, the $ \kappa $\emph{-bounded} Hahn fields (Example~\ref{rayner-examples-ours}) introduced in \cite{alling:existence-eta-alpha} and, in particular, the maximal Hahn fields. The main result of this section is a characterisation of Rayner fields with the canonical first lifting property (Proposition~\ref{prop:rayner-criterion}).
	
	\noindent
	In Subsection~\ref{sec:decomp-general}, combining results from the previous sections, we obtain two decompositions into a 4-factor semi-direct product. One for the groups $ \vAut K $ and $ \vAut_{k}K $ for a Hahn field $ K $ with the first and canonical second lifting property (Theorem~\ref{hofberger-enhanced}) and one for $ \vAut_{(k)}K $ under the extra assumption, that $ K $ has the canonical first lifting property (Proposition~\ref{hofberger-enhanced-k-stable}).

	Section~\ref{sec:strongly-linear} focuses on strong additivity. 
	
	\noindent
	In Subsection~\ref{subsec:structure} we show that Hofberger's decomposition, in its generalised and refined form, also holds if we restrict to the group $ \vAut^{\Str}K $  (Proposition~\ref{prop:hofberger-str-linear} and Proposition~\ref{rmk:1-Aut-is-normal}). This way we obtain a detailed description of $ \vAut^{\Str} K$ and its subgroups $\vAut_{(k)}^{\Str} K,\ \vAut_k^{\Str}K $ (Theorem~\ref{thm:hofberger-enhanced-str}), which is the main result of this subsection.
	
	\noindent
	Subsection~\ref{subsec:general} is devoted to a deeper investigation of the component $ \uAut^{\Str}_{k} K $ appearing in Theorem~\ref{thm:hofberger-enhanced-str}.
	We do this in terms of the \emph{$ K $-summable homomorphisms} $ \Hom^{\Str}(G,1+I_K) $ (Definition~\ref{def:summable-map}) from the value group into the group of 1-units of the valuation ring. This is the main result of the section (Theorem~\ref{thm:main}), which provides a decomposition of $ \vAut_{(k)}^{\Str}K $ and $ \vAut_{k}^{\Str}K $ purely in terms of the valuation invariants of $ K $.
	
	Section~\ref{sec:examples} is devoted to the explicit description of the automorphism groups in some special cases.
	
	\noindent
	In Subsection~\ref{fin-gen-G:subsec} we study the case of a Hahn field $ K\subseteq k\pow{G} $ with a finitely generated exponent group, necessarily of the form $ G=\Z^n $. In this case, if $ K $ has the canonical second lifting property, we can explicitly describe $ \gexp{G}K $ in terms of $ k $ and the number $ n $ of generators of $ G $ (Theorem~\ref{general-Z^n:thm}). If $ G=\Z^n $ is ordered lexicographically, we can moreover represent $ \oAut G $ as a group of matrices. Applying this, we give a description of the groups $ \vAut_{(k)}^{\Str}K$ and $ \vAut_k^{\Str}K $, which depends solely on $ k $, $ 1+I_K $ and $ n $ (Theorem~\ref{srt-add-Z^n:thm}).
	
	\noindent
	In Subsection~\ref{subsect-laurent-series} we apply the results from Subsection~\ref{fin-gen-G:subsec} to the field $ \bbL $ of Laurent series.
	We notice that $ \vAut\bbL=\vAut^{\Str}\bbL $ and obtain a precise description of $ \vAut_{(k)}\bbL $ in terms of $ 1+I_\bbL $, $ k^\times $ and $ \Aut k $ (Theorem~\ref{thm:laurent-main}).
	Schilling's result on the $ k $-automorphisms of $ \bbL $ can be derived as a special case (Corollary~\ref{schilling:cor}).
	We also provide a sharpening of Theorem~\ref{thm:laurent-main} for the case of an ordered coefficient field $ k $ to characterise the group $ \oAut\bbL $ of order preserving automorphisms.
	
	\noindent
	Subsection~\ref{cremona-1:es} is also dedicated to the case $ G=\Z $, but focusses on the field $ k(\Z) $. The group $ \Cr_1(k):=\Aut_k k(\Z) $ is the \emph{Cremona\footnote{Cremona groups are of fundamental importance in algebraic geometry (see \cite{deserti}).} group in dimension $ 1 $ over $ k $}.
	We identify the groups $ \vAut_k k(\Z) $ and $ \uAut_k k(\Z) $ as subgroups of $ \Cr_1(k) $.
	
	\noindent
	In Subsection~\ref{divisible:subsec} we study the case of a Hahn field $ K\subseteq k\pow{G} $ where $ G $ is a totally ordered, divisible, abelian group, of finite dimension $ d $ over $ \Q $ and $ k $ a real closed field. 
	In analogy to Subsection~\ref{fin-gen-G:subsec}, we provide a description of $ \oAut G $ as a group of rational matrices and, under the extra assumption that $ K $ is henselian, we give an explicit description of the groups $ \vAut_{(k)}^{\Str}K$ and $ \vAut_k^{\Str}K $, depending only on $ k $, $ 1+I_K $ and $ d $ (Theorem~\ref{hensel-char0-str-add:thm}).
	
	\noindent
	In Subsection~\ref{subsect-puiseux} we consider the field $ \bbP $ of Puiseux series. After observing that $ \vAut\bbP = \vAut^{\Str}\bbP $ (Proposition~\ref{prop:puiseux-str-lin}), we apply our general results and retrieve decompositions analogous to those appearing in \cite{deschamps:puiseux}.

\section{Definitions and notations}\label{sec:definitions}
\setcounter{subsection}{1}
Let $ (G,+,0,<) $ be a totally ordered abelian group\footnote{Unless explicitly specified otherwise, all orderings will be total.} and $ k $ a field.
Consider the set $ k^G $ of all functions from $ G $ to $ k $ and for an element $ a\in k^G $ define the \emph{support of} $ a $ to be the set $ \supp a := \{g\in G : a(g)\neq 0 \} $. We denote by $ \K=k\pow{G} $ the set of all elements of $ k^G $ with well ordered support.
Denote a function $ a\in k\pow{G} $ by the formal power series expression $a= \sum_{g\in G}a_gt^g $ where $ a_g :=a(g) $. In particular, the coefficient $ a_0:= a(0) $ will play an important role and we call it the \emph{constant term} of $ a $.
We define the following two operations: for $ a=\sum_G a_gt^g,\ b=\sum_G b_gt^g \in \K $ set
\[
a+b := \sum_{g\in G} (a_g+b_g)t^g\qquad
\text{and}\quad
 ab := \sum_{g\in G}c_gt^g\quad \text{where }
c_g=\sum_{\substack{r,s\in G\\r+s=g}}a_rb_s. \]
It was shown by Hahn \cite{hahn} that these two operations are well defined and make $ \K $ into a field. We call $ \K $ the \emph{maximal Hahn field over $ k $ with exponents in $ G $\label{maximal:def}}.

The \emph{group ring} $ k[G] $ is the unitary subring of $ \K $ consisting of series with finite support (generalised polynomials). It is an integral domain and we denote its field of fractions by $ k(G):=\Frac (k[G])\subseteq \K $.
\begin{defn}\label{def:hahn-field}
	A \emph{Hahn field} is a field $ K $ such that $ k(G)\subseteq K \subseteq \K $. 
	We define a valuation $ v_{\min}^K $ on a Hahn field $ K $ by setting $ v_{\min}^K(a)=\min\supp(a) $ for $ a\neq 0 $ and $ v_{\min}^K(0)=\infty $.
	We will call this the \emph{canonical valuation} on $ K $. Whenever the context is clear we will simply write $ v $ instead of $ v_{\min}^K $.
	For  $a\in K^\times $ we define the \emph{first coefficient of}
$ a $ to be $ a_{v(a)}:= a(v(a)) $.
\end{defn}
\begin{notation}
\label{not:general-KvG}
For a Hahn field $ K $ we will denote by $ v $ its canonical valuation and by $ G = v(K^\times) $ its value group. We denote by $ t^G $ the multiplicative group of monic monomials $ t^G:=\{ t^g:g\in G \} $.
The valuation ring $ R_K $ is $ k\pow{G^{\geq 0}}\cap K $ (elements with non-negative value) and the valuation ideal $ I_K $ is 
$ k\pow{G^{>0}}\cap K $ (elements with positive value). The residue field is $ \bar{K} = R_K/I_K $.

If $ k $ is totally ordered by $ <_k $, we order $ \K $ lexicographically by setting $ a>_{\lex}0\Leftrightarrow a_{v(a)}>_k0 $. Then $ (\K,<_{\lex}) $ is a totally ordered field and so is any subfield $ K\subseteq \K $. When considering the lexicographic order on a Hahn field $ K $ we will omit the subscript and simply denote the ordering by $ < $.
\end{notation}

\begin{rmk}\label{coeff-isom}\label{not:general-KvG-2}
\begin{enumerate}[(i)]
	\item The residue field is isomorphic to $ k $ via the canonical isomorphism $f_c\colon \bar K \to k,\ a+I_K\mapsto a_0 $, for all $ a\in  R_K $.
	Note that, if $ f\colon\bar K \to k $ is any other isomorphism, then there exists a uniquely determined $ \rho_f\in\Aut k $, defined by $ \rho_f(a_0)=f(a_0+I_K) $, such that $ f = \rho_f f_c $.
\item 
The units of the valuation ring are the elements with null value, so the group of units $ R_K^\times $ is the direct sum $ U_K:=I_K + k^\times $. 
\item 
A subgroup of $ U_K $ that will be relevant in the sequel is that of \emph{1-units} (units with constant term 1), i.e., $ 1+I_K $. In fact, $ U_K $ is (isomorphic to) the direct product $ U_K \simeq (1+I_K)\times k^\times $.
\item
In the sequel we will consider the group $ \Hom(G,U_K) $ of homomorphisms of $ (G,+) $ into $ (U_K,\cdot) $, endowed with pointwise multiplication $ \cdot $
(and similarly for $ \Hom(G,1+I_K) $ and $ \Hom(G,k^\times) $).
Since $ U_K = (1+I_K)\times k^\times  $ we have $$  (\Hom(G,U_K),\cdot) \simeq (\Hom(G,1+I_K),\cdot) \times (\Hom(G,k^\times),\cdot).  $$
\end{enumerate}
\qed
\end{rmk}

\begin{defn}\label{def:k,k-stab,v-,o-Aut}
An automorphism $ \tau $ of the group $ (G,+) $ is said to be \emph{order preserving} if, for all $ g\in G $ we have $ g>0\Rightarrow \tau(g)>0 $.
If $ K $ is an ordered Hahn field, an \textit{order preserving automorphism of} $ K $ is defined in a similar way.
An automorphism $ \sigma \in\Aut K $ is \emph{valuation preserving} if there exists a (necessarily unique) order preserving automorphism $ \tau $ of $ G $ such that, for all $ a\in K $, we have $ \tau(v(a)) = v(\sigma(a)) $.
An automorphism $ \sigma \in \Aut K $ is a
$ k $\emph{-stable automorphism} if $ \sigma(k)=k $ and a 
$ k $\emph{-automorphism} if $ \sigma|_k=\id_k $.
Notice that $ \sigma\in\Aut K $ is a $ k $-automorphism if and only if it is $ k $-linear (when we view $ K $ as a $ k $-vector space).
\end{defn}

We recall the definition of a henselian Hahn field. For the corresponding definition for a general valued field see, for example, \cite[Section~4.1]{engler-prestel} 
\begin{defn}\label{henselian:def}
	Let $ K\subseteq k\pow{G} $ be a Hahn field. 
	For a polynomial $ p=p_0+p_1X+\ldots+p_nX^n\in R_K[X] $ let $ \bar p := (p_0)_0 + (p_1)_0 X +\ldots+ (p_n)_0X^n \in k[X]$.
	Then $ K $ is \emph{henselian} if for every polynomial $ p\in R_K[X] $ and every $ a\in R_K $ such that $ \bar p(a_0)=0 $ and $ \bar p'(a_0)\neq 0  $ there exists $ b\in R_K $ such that $ b_0 = a_0 $ and $ p(b)=0 $.
\end{defn}

\begin{notation}\label{not:Aut-groups}
	\begin{enumerate}[(i)]
		\item \label{not:sub-k-Aut}
		The $ k $-stable automorphisms form a group under composition that we will denote by $ \Aut_{(k)}K $.
		The $ k $-automorphisms form a group under composition that we will denote by $ \Aut_kK $.
		\item \label{not:sub-v-Aut}
		The valuation preserving automorphisms of $ K $ form a group under composition that we will denote by $ \vAut K $.
		We will also write $ \vAut_{(k)} K = \vAut K \cap \Aut_{(k)}K $ and $ \vAut_kK = \vAut K \cap \Aut_kK $.
		\item \label{not:sub-o-Aut}
		The order preserving automorphisms of $ G $ form a group under composition that we will denote by $ \oAut G $.
		If $ K $ is an ordered Hahn field, $ \oAut K $ is defined in a similar way.
		
	\end{enumerate}
\end{notation}

\begin{rmk}[Characterisations of valuation preserving automorphisms]
	\label{rmk:val-pres-induces-ord}\label{rmk:ord-implies-val-pres}\label{not:sub-induced-aut}
		Various characterisations of valuation preserving automorphisms in terms of the valuation ring, valuation ideal and group of units are given in \cite[Theorem~4.2]{kuhlmann-matusinski-point}.
	In particular, for all $ \sigma \in \Aut K $:	
		\[ \sigma \in \vAut K \Leftrightarrow
		\sigma(R_K) = R_K \Leftrightarrow
		\sigma(I_K) = I_K \Leftrightarrow
		\sigma(U_K) = U_K \Leftrightarrow
		\sigma(1+I_K) = 1+I_K	\]
		\begin{enumerate}[(i)]
	\item So, for $ \sigma \in \vAut K $ we have 
		 \[\sigma(k^\times\times(1+I_K)) = k^\times\times(1+I_K) \text{ and } \sigma(1+I_K) = 1+I_K.  \]
		 This however does not imply, in general, that $ \sigma(k^\times) = k^\times $, i.e., does not imply, in general, that $ \sigma\in\vAut_{(k)}K. $
		 
	\item Furthermore $ \sigma\in \vAut K $ if and only if, for all $ a,b\in K $ we have $$  v(a)<v(b)\iff v(\sigma(a))<v(\sigma(b)).  $$
	
	\noindent Equivalently, $ \sigma \in \vAut K $ if and only if the map 
	\[ \sigma_G\colon G\to G,\quad\sigma_G(v(a)):=v(\sigma(a)) \]
	is a well defined element of $ \oAut G $, if and only if the map
	\[\bar\sigma\colon\bar K \to\bar K,\quad a+I_K\mapsto \sigma(a)+I_K \]
	is a well defined element of $ \Aut\bar{K} $.
	
	\item 
	If we fix an isomorphism \(f\colon\bar{K}\overset{\sim}{\longrightarrow} k\), the automorphism $ \bar\sigma $ uniquely defines an automorphism $ \sigma_k\in\Aut k $, given by $ \sigma_k = f \bar\sigma  f\inv $.
	Note that, if we write $ f = \rho_f f_c $ then $ \sigma_k(x) = \rho_f(\sigma(\rho_f\inv(x))_0) $ for all $ x\in k $.
	We call $ \sigma_G $ and $ \sigma_k $ the \emph{automorphisms induced by $ \sigma $ on $ G $ and $ k $}, respectively.
	\qed
	\end{enumerate}
\end{rmk}

\begin{rmk}[The automorphism group of $ R_K $]
	Let $ \Aut R_K $ denote the automorphism group of the valuation ring.
	The map $ \vAut K \to \Aut R_K $ given by $ \sigma\mapsto\sigma|_{R_K} $ is well defined by Remark~\ref{rmk:val-pres-induces-ord}.
	Moreover, since $ R_K $ is a valuation ring in $ K $, 
	then for all $ a\in K $ there exist $ c,d\in R_K $ with $ d\neq 0 $ and $ a = c/d $. Then
	every automorphism $ \nu\in\Aut R_K $ extends uniquely to an automorphism $ \sigma\in\vAut K $ by 	
$ 	\sigma(a) = \sigma\brackets{\frac{c}{d}} = \frac{\nu(c)}{\nu(d)} $.
	We see that $ \vAut K \simeq \Aut R_K $, therefore, our work also provides a study of $ \Aut R_K $.
	\qed
\end{rmk}
 
If $ k $ is an archimedean field,  an order preserving automorphism of $ K $ also preserves the valuation $ v $, so we have $ \oAut K \leq \vAut K $ (\cite[Lemma~1.3]{salma-monograph}).
If $ k $ is not archimedean this is no longer true, as shown in Example~\ref{es:counter-non-arch} below.

\begin{notation}\label{not:hahn-sum}
	Let $ A,B $ be ordered abelian groups. We denote by $ G=A\amalg B $ the group $ A\oplus B $ endowed with the lexicographic order.
	More generally, if $ \Gamma $ is a chain and $ Q $ is an ordered abelian group, then $ H=\coprod_\Gamma Q $ is the ordered abelian group $ \bigoplus_{\gamma\in\Gamma} Q $ endowed with the lexicographic order. We denote an element $ h\in H $ by $ h = \sum_{\gamma\in\Gamma}q_\gamma\1_\gamma $ where $ q_\gamma \in Q $, $ \1_\gamma $ is the characteristic function on the singleton $ \{\gamma\} $ and $ \supp(h) = \{ \gamma\in\Gamma : q_\gamma\neq 0 \} $ is finite.
\end{notation}

\begin{es}\label{es:counter-non-arch}
	Let $ A = \coprod_{\Z^{<0}}\Q $, $ B=\coprod_{\Z^{\geq 0}}\Q $ and $ G = A\amalg B = \coprod_\Z \Q $. The group $ G $ is what is called a \emph{Hahn group}. It is a valued group with valuation $ v_G\colon G^{\neq 0 }\to \Z $ given by $ v_G(\sum_{n\in\Z} q_n\1_n) = \min\{n:q_n\neq 0\} $ (for more on Hahn groups see \cite{ours-groups, michelethesis}).
	
	\noindent
	Consider the ordered Hahn fields $ k = \R\pow{B} $, $ K = k\pow{A} $ and $ F = \R\pow{G} $. The canonical valuation $ v_{\min}^K $ on $ K $ has value group $ v_{\min}^K(K^\times)=A $ and non-archimedean residue field $ k=\R\pow{B} $.
	
	\noindent
	The canonical valuation $ v_{\min}^F $ has value group $ v_{\min}^F(F^\times) = G $ and residue field $ \bar F = \R $. This is the finest convex valuation on $ F $ (\cite[p~17]{salma-monograph}). The field $ F $ also admits a coarser valuation $ w $ whose value group is $ w(F^\times) = A =v_{\min}^K(K^\times)$ and residue field is $ \bar{F}^w = \R\pow{B}=k $. The valuation $ w $ is defined (for non-zero elements) by
	\begin{equation}\label{eq:w-valuation}
	w\left( \sum_{(q,r)\in G}\alpha_{(q,r)}t^{(q,r)}\right) = \min\{ q\in A:\exists r\in B,\ \alpha_{(q,r)}\neq 0 \}.
	\end{equation}
	It is straightforward to verify that the map
	\[
	\xi\colon (F,w)\to (K,v_{\min}^K),\quad
	\sum_{(q,r)\in G}\alpha_{(q,r)}t^{(q,r)}\mapsto
	\sum_q\left( \sum_r \alpha_{(q,r)}x^r \right)y^q,
	\]
	where $ x $ is the variable in $ \R\pow{B} $ and $ y $ the variable in $ k\pow{A} $,
	is an isomorphism of valued fields (i.e., for all $ \alpha \in F $ we have $ w(\alpha)=v_{\min}^K(\xi(\alpha)) $).
	
	\noindent
	We will construct an order preserving automorphism $ \sigma $ of $ F $ that does not preserve $ w $ and therefore $ \xi\inv \sigma \xi $ will be an order preserving automorphism of $ K $ that does not preserve $ v_{\min}^K $.
	
	\noindent
	Consider the automorphism $ \rho $ of the chain $ (\Z,<) $ given by $ n\mapsto n+1 $. Since $ G $ has the canonical first lifting property as a Hahn group (see ~\cite{ours-groups}), the map $ \rho $ lifts to an order preserving automorphism $ \rho_G\in\oAut G $ given by $ \rho_G\left( \sum_{n\in\Z}q_n\1_n \right) = \sum_{n\in\Z}q_n\1_{\rho(n)} $. 
	Now, since $ F $ is a maximal Hahn field, by Example~\ref{canonical-puiseux-kappa} it has the canonical first lifting property, so $ \rho_G $ lifts to an automorphism $ \sigma\in v_{\min}^F\text{-}\Aut F $. Since $ G $ is divisible, $ F $ is real closed and all its automorphisms are necessarily order preserving, so $ \sigma\in\oAut F $ \cite[Theorem~8.6]{priess-crampe}.
	Now we show that $ \sigma $ does not preserve $ w $.
	
	\noindent
	Set $ U_w = \{ a\in F^\times : w(a)=0 \} $, the group of units of the valuation ring of $ w $ and $ G_w = v_{\min}^F(U_w) $. From \eqref{eq:w-valuation}, it is straightforward to verify that $ G_w = B $. Write $ \Gamma_w = v_G(G_w^{\neq 0}) = \Z^{\geq 0} $. Then \cite[Theorem~4.7]{kuhlmann-matusinski-point} implies that $ \sigma $ preserves $ w $ if and only if the induced chain automorphism $ \rho $ of $ \Z $ preserves $ \Gamma_w $. But we have $ \Gamma_w = \Z^{\geq 0}\neq \Z^{>0}=\rho(\Gamma_w) $ thus $ \sigma $ does not preserve $ w $.
	\qed
\end{es}

\section{Decomposition theorems}\label{section:decomposition}

\subsection{The first lifting property}\label{sec:lifting-property}

Let $ K $ be a Hahn field.
As announced in the introduction, our aim is to study $ \vAut K $.
Let $\sigma\in \vAut K$, $\sigma_G \in \oAut G$ the induced automorphism of the value group and $ \bar \sigma \in \Aut \bar K $ the one induced on the residue field.
This gives rise to a map

\begin{equation}\label{eq:Phi_K}
\Phi_K\colon  \vAut K  \longrightarrow  \Aut \bar K \times \oAut G,\qquad
\sigma \longmapsto (\bar \sigma,\sigma_G).
\end{equation}

Let \(f\colon\bar{K}\overset{\sim}{\longrightarrow} k\) be an isomorphism. We recall that the automorphism $ \bar\sigma $ uniquely defines an automorphism $ \sigma_k\in\Aut k $, given by $ \sigma_k = f \bar\sigma  f\inv $ (see  Remark~\ref{rmk:val-pres-induces-ord}).
This defines a map
\begin{equation}\label{eq:Phi}
\Phi_{K,f}\colon  \vAut K  \longrightarrow  \Aut k \times \oAut G,\qquad
\sigma \longmapsto (\sigma_k,\sigma_G).
\end{equation}
It is straightforward to verify that $ \Phi_{K,f} $ is a group homomorphism and that, if $ e\colon \bar K \to k $ is another isomorphism, then $ \Phi_{K,e} $ and $ \Phi_{K,f} $ are related by the formula
\begin{equation}\label{eq:Phi-transf-formula}
\Phi_{K,e} = \Phi_{K,\delta f}\quad \text{where $ \delta:=ef\inv \in \Aut k $.}
\end{equation}

\medskip
\noindent\textbf{Whenever the context is clear we will omit $ K,f $ from the notation and write $ \Phi $ instead of $ \Phi_{K,f} $.}
\begin{defn}
	\label{def-int-field}
	The kernel $ \ker\Phi $ of the map \eqref{eq:Phi}
	is a normal subgroup of $ \vAut K $ that we call the
	subgroup of \emph{internal automorphisms of} $K$.
	We will denote it  by $\Int\Aut K$.
	We write $ \Int\Aut_{(k)} K:= \Int\Aut K \cap\, \vAut_{(k)}K $ and
	$ \Int\Aut_{k} K:= \Int\Aut K \cap\, {\vAut_{k}K} $.
	Notice that, since $ \sigma_k = f\bar\sigma f\inv $, then $ \sigma_k = \id_k \iff \bar\sigma=\id_{\bar K} $. The definition of $ \Int\Aut K $ is therefore independent of our choice of $ f $.
\end{defn}

\noindent
We explicitly point out the following key properties of internal automorphisms:
\begin{prop}\label{rmk:internal-key-property}
	 Let $ \sigma\in\vAut K $. Then $ \sigma\in\Int\Aut K $ if and only if both of the following hold:
	 \begin{enumerate}[(i)]
	 	\item $ v(a) = v(\sigma(a)) $, for all $ a\in K $;
	 	\item if $ a\in R_K $ then $ \sigma(a)_0 = a_0 $.
	 \end{enumerate}
 Moreover, we have $ \Int\Aut_{(k)}K = \Int\Aut_kK $.
\end{prop}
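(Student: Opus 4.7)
The plan is to unpack the definition $\Int\Aut K = \ker \Phi_{K,f}$ and translate the two coordinate conditions $\sigma_G = \id_G$ and $\bar\sigma = \id_{\bar K}$ (equivalently $\sigma_k = \id_k$) into the concrete requirements~(i) and~(ii). First I would handle~(i). By Remark~\ref{rmk:val-pres-induces-ord}, the induced map $\sigma_G$ is characterised by $\sigma_G(v(a)) = v(\sigma(a))$ for every $a \in K^\times$. Hence $\sigma_G = \id_G$ reads exactly as $v(\sigma(a)) = v(a)$ for all $a \in K^\times$, which together with the vacuous case $a = 0$ is precisely~(i).

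Next I would translate $\bar\sigma = \id_{\bar K}$ into~(ii). The equivalence $\sigma_k = \id_k \Leftrightarrow \bar\sigma = \id_{\bar K}$, noted in Definition~\ref{def-int-field}, makes the reformulation independent of the auxiliary isomorphism $f$. For $a \in R_K$, the equality $\bar\sigma(a + I_K) = a + I_K$ means $\sigma(a) - a \in I_K$. Since $\sigma \in \vAut K$ preserves $R_K$ by Remark~\ref{rmk:val-pres-induces-ord}, the difference $\sigma(a) - a$ lies in $R_K$, so it belongs to $I_K$ if and only if its constant term vanishes; equivalently, by the canonical isomorphism $f_c\colon \bar K \to k$, $a + I_K \mapsto a_0$ of Remark~\ref{coeff-isom}, this is $(\sigma(a) - a)_0 = 0$, i.e., $\sigma(a)_0 = a_0$. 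Combining the two translations, $\sigma \in \ker \Phi = \Int\Aut K$ iff both (i) and~(ii) hold.

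For the final assertion, the inclusion $\Int\Aut_k K \subseteq \Int\Aut_{(k)}K$ is immediate since every $k$-automorphism is $k$-stable. For the reverse inclusion, let $\sigma \in \Int\Aut_{(k)}K$ and take any $a \in k$. Then $a \in R_K$ (every nonzero constant has valuation $0$) and $a_0 = a$, so condition~(ii) yields $\sigma(a)_0 = a$. On the other hand, $k$-stability gives $\sigma(a) \in k$, whence $\sigma(a) = \sigma(a)_0 = a$. Thus $\sigma|_k = \id_k$ and $\sigma \in \Int\Aut_k K$, proving the equality.

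I do not expect any genuine obstacle: the whole argument is a direct definitional unfolding. The only conceptual points worth flagging are that $\sigma(a)_0$ is well defined thanks to $\sigma(R_K) = R_K$, that the equivalence between "$\sigma(a)-a \in I_K$" and "$\sigma(a)_0 = a_0$" requires the preliminary observation $\sigma(a) - a \in R_K$, and that the characterisation of internality is independent of the chosen isomorphism $f \colon \bar K \to k$, all of which have been made available by the setup preceding the statement.
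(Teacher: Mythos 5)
Your proof is correct and follows essentially the same route as the paper: unpack $\Int\Aut K=\ker\Phi$, observe that $\sigma_G=\id_G$ is literally condition (i), and translate $\bar\sigma=\id_{\bar K}$ into condition (ii) via the identification of $I_K$ inside $R_K$ with the elements of vanishing constant term. Your explicit argument for $\Int\Aut_{(k)}K=\Int\Aut_kK$ (using (ii) plus $k$-stability to get $\sigma|_k=\id_k$) is exactly the step the paper leaves as ``follows immediately.''
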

\begin{proof}
	Let $ \sigma\in \Int\Aut K $. Then
	(i) holds by definition, because $ \sigma_G = \id_G $.
	To prove (ii) let $ a\in R_K $ and let $ \bar\sigma\in \Aut \bar K $ be defined as in Remark~\ref{rmk:val-pres-induces-ord}. Then, since $ \bar\sigma = \id_{\bar K} $ we have
		\[	
		a_0 + I_K = a+I_K = \bar\sigma (a + I_K) = \sigma(a)+I_K = \sigma(a)_0 +I_K
		\]
	which, since $ a_0, \sigma(a)_0 \in k $, implies $ a_0= \sigma(a)_0 $.
	
Vice versa, let $ \sigma\in\vAut K $ satisfy (i) and (ii).
From (i) it follows that $ \sigma_G=\id_G $. Let $ a\in R_K $ and compute
\[
\bar\sigma(a+I_K)=\sigma(a)+I_K = \sigma(a)_0 +I_K \overset{(ii)}{=} a_0+I_K.
\]
Thus $ \bar\sigma=\id_{\bar K} $ and so $ \sigma\in\Int\Aut K $.
The last statement now follows immediately.
\end{proof}

In Section~\ref{subsec:internal} we will study the group $ \Int\Aut K $ of internal automorphisms of a Hahn field $ K $. Now we want to determine a complement of $ \Int\Aut K $ in $ \vAut K $.

\begin{defn}\label{def-lifting-fields}
	We say that a pair
	$ (\rho,\tau) \in \Aut k\times \oAut G $ \emph{lifts to} $ K $ if there exists
	an automorphism $ \sigma\in\vAut K $ such that $ \Phi(\sigma)=(\rho,\tau) $. In this case we call $ \sigma $ a \emph{lift of} $ (\rho,\tau) $.
	If the map $ \Phi $ defined in \eqref{eq:Phi} admits a section (in particular, $ \Phi $ is surjective), i.e., an injective group homomorphism $ \Psi\colon \Aut k \times \oAut G\to \vAut K $ such that $ \Phi \Psi = \id $, then every pair
	$ (\rho,\tau) \in \Aut k\times \oAut G $ lifts to an automorphism $ \Psi(\rho,\tau) $ of $ K $ and  we say that $ K $ has 	the \emph{first lifting property with respect to $ \Psi $}.
\end{defn}

\begin{es}
	\begin{enumerate}[(i)]
		\item The maximal Hahn field $ \K $ has the first lifting property. Indeed, for every pair $ (\rho,\tau)\in\Aut k\times \oAut G $ the map $ \sigma\colon\sum a_gt^g\mapsto \sum\rho(a_g)t^{\tau(g)} $ is an automorphism of $ \K $ such that $ \Phi(\sigma)=(\rho,\tau) $ (see Corollary~\ref{canonical-puiseux-kappa}).
		\item The field $ k(G) $ has the first lifting property. The map $ \sigma $ of part (i) restricts to an automorphism of $ k(G) $.
		\item A large class of Hahn fields with the first lifting property will be described in Section~\ref{subsec:rayner}.
	\end{enumerate}\qed
\end{es}

\begin{rmk}
	The morphism $ \Phi_K $ admits a section if and only if $ \Phi_{K,f} $ admits a section, for every isomorphism $ f\colon \bar K \to k $.
	Indeed, if $ e\colon \bar K \to k $ is another isomorphism and $ \Psi_{K,f} $ is a section of $ \Phi_{K,f} $, then it follows from Equation~\eqref{eq:Phi-transf-formula} that a section $ \Psi_{K,e} $ of $ \Phi_{K,e} $ is given by the formula
	\begin{equation}\label{eq:Psi-transf-formula}
	\Psi_{K,e}(\rho,\tau) = \Psi_{K,f}(\delta\inv\rho\delta,\tau)
	\quad
	\text{ where } \delta = ef\inv.
	\end{equation}
	\qed
\end{rmk}

\begin{defn}
	\label{def-ext-field}
	Assume that $ K $ has the first lifting property with respect to a fixed section $ \Psi $ of $ \Phi $.
	The subgroup $ \Psi(\Aut k \times \oAut G) $ of $ \vAut K $ will be called the subgroup of $ \Psi $\emph{-external automorphisms of $K$} and denoted by $\pExt\Aut K$.
	Therefore, for every $ \sigma\in\Psi\text{-}\Ext\Aut K $ and every $ (\rho,\tau)\in\Aut k \times \oAut G $ we have
	\[
	\Phi(\sigma) = (\rho,\tau)\iff \Psi(\rho,\tau)=\sigma.
	\]
	Hence, for any section $ \Psi $ we have $ \Psi\text{-}\Ext\Aut K \simeq \Aut k \times \oAut G $. 
	
	\noindent
	We will also use the notations $ \pExt\Aut_{(k)} K:= \pExt\Aut K \cap \Aut_{(k)}K $ and
$ \pExt\Aut_{k} K:= \pExt\Aut K \cap \Aut_{k}K $.
\end{defn}

\begin{rmk}\label{rmk:Psi1-Psi2}\label{rmk:lifts-k-stable}
	\begin{enumerate}[(i)]
	\item 
	Let $ \sigma\in\Psi\text{-}\Ext\Aut K $, say $ \sigma =\Psi(\rho,\tau) $ for $ (\rho,\tau)\in\Aut k \times \oAut G $. Then $ \sigma\in\vAut_k K \iff \rho=\id_k $.
	Thus  
	$ \pExt\Aut_k K\simeq \oAut G $.
	\item 
	If $ K $ has the first lifting property with respect to $ \Psi $,
	the homomorphism $ \Phi $ defined in \eqref{eq:Phi} is surjective and every pair $ (\rho,\tau)\in\Aut k \times \oAut G $ lifts to an automorphism $ \sigma=\Psi(\tau)\in \vAut K $.
	The first isomorphism theorem then yields
	\[
	\Aut k \times \oAut G \simeq \frac{\vAut K }{\Int\Aut K}.
	\]
	In particular, the set of lifts of some pair $ (\rho,\tau)\in\Aut k \times \oAut G $ is the coset $ \{ \sigma \sigma' : \sigma'\in\Int\Aut K \} $, where $ \sigma $ is any given lift of $ (\rho,\tau) $.
	\item 
	Notice also that a $ k $-automorphism is not necessarily internal. Let $ \id_G\neq\tau\in\oAut G $, then the the pair $ (\id_k,\tau) $ lifts to an automorphism $ \sigma\in \vAut_k K \setminus \Int\Aut K. $\qed
	\end{enumerate}
\end{rmk}

\begin{cor}\label{prop:external-direct}
	We have:
	\begin{align}
		\pExt\Aut K &\simeq \Aut k \times \oAut G \label{eq:external-direct-k-stable}\\
		\pExt\Aut_kK &\simeq \oAut G \label{eq:external-direct-k}
	\end{align}\qed
\end{cor}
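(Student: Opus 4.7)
The plan is to deduce both isomorphisms directly from the definition of the first lifting property together with Remark~\ref{rmk:lifts-k-stable}(i). Since this is really an unpacking of definitions, the proof will be short; the only thing that requires a small argument is the identification in \eqref{eq:external-direct-k}.

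For \eqref{eq:external-direct-k-stable}: by Definition~\ref{def-lifting-fields}, $\Psi\colon \Aut k\times \oAut G\to \vAut K$ is an injective group homomorphism, and by Definition~\ref{def-ext-field} we have $\pExt\Aut K = \Psi(\Aut k\times \oAut G)$. Thus $\Psi$ corestricts to an isomorphism $\Aut k\times \oAut G \xrightarrow{\sim} \pExt\Aut K$, which is precisely \eqref{eq:external-direct-k-stable}.

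For \eqref{eq:external-direct-k}: the plan is to show
\[
\pExt\Aut_k K = \Psi(\{\id_k\}\times \oAut G).
\]
One inclusion is immediate: if $\tau\in\oAut G$ and $\sigma:=\Psi(\id_k,\tau)$, then by Remark~\ref{rmk:lifts-k-stable}(i) (with $\rho=\id_k$) we have $\sigma\in\vAut_k K$, and by construction $\sigma\in\pExt\Aut K$, hence $\sigma\in\pExt\Aut_k K$. Conversely, if $\sigma\in\pExt\Aut_k K$, write $\sigma=\Psi(\rho,\tau)$ for a unique pair $(\rho,\tau)\in\Aut k\times \oAut G$ (unique by injectivity of $\Psi$); then $\sigma\in\vAut_k K$, so by Remark~\ref{rmk:lifts-k-stable}(i) we get $\rho=\id_k$, showing $\sigma\in\Psi(\{\id_k\}\times \oAut G)$.

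Combining the two inclusions, the restriction of $\Psi$ to $\{\id_k\}\times\oAut G$ is an injective homomorphism onto $\pExt\Aut_k K$, giving
\[
\pExt\Aut_k K \simeq \{\id_k\}\times\oAut G \simeq \oAut G,
\]
which is \eqref{eq:external-direct-k}. There is no real obstacle here; the content of the corollary is essentially bookkeeping, with Remark~\ref{rmk:lifts-k-stable}(i) doing the only nontrivial work by ensuring that the $k$-automorphism condition on a $\Psi$-external automorphism translates exactly into the condition $\rho=\id_k$ on its preimage under $\Psi$.
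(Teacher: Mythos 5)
Your proof is correct and follows exactly the route the paper intends: the paper leaves this corollary without proof because \eqref{eq:external-direct-k-stable} is already recorded in Definition~\ref{def-ext-field} (injectivity of the section $\Psi$) and \eqref{eq:external-direct-k} is exactly Remark~\ref{rmk:Psi1-Psi2}(i). Your unpacking of the two inclusions for $\pExt\Aut_kK=\Psi(\{\id_k\}\times\oAut G)$ is just a more explicit write-up of the same bookkeeping.
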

Analogous results to Corollary~\ref{prop:external-direct} for $ k $-stable automorphisms will appear in Section~\ref{sec:CLP}.
\noindent\textbf{}

\subsection{The group of valuation preserving automorphisms}
\label{subsec:Hofberger}
In this section we assume that all Hahn fields under consideration have the first lifting property with respect to $ \Psi $. Whenever the context is clear we will omit $ \Psi $ from the notation and terminology.
Recall our notation $ \K = k\pow{G} $.
In \cite[Satz 2.2]{hofbergerthesis} Hofberger shows that $ \vAut \K $ can be decomposed into a semi-direct product of the groups of internal and
external automorphisms (with respect to a specific section -- see \eqref{eq:Phi-can}).
We generalise Hofberger's result to a Hahn field $ K\subseteq \K $ which has the first lifting property with respect to an arbitrary section.

\begin{theorem}
	\label{hofberger-semi-direct}
	Let $K\subseteq \K$ be a Hahn field with the first lifting property.
	Then we have the following inner \footnote{
	For $ \alpha_i\in \Int\Aut K $ and $ \beta_i\in \Ext\Aut K $, $ i=1,2 $, we have $ (\alpha_1,\beta_1)(\alpha_2\beta_2)=(\beta_1\inv\alpha_1\beta_1\alpha_2,\beta_1\beta_2) $.
	} 
	semi-direct product decompositions:
\begin{alignat}{3}
	\vAut K &= \Int\Aut K &&\rtimes \Ext\Aut K \label{eq:hofberger-plain}\\
	\vAut_{(k)} K &= \Int\Aut_{k} K &&\rtimes \Ext\Aut_{(k)} K \label{eq:hofberger-k-stable}\\
		\vAut_k K &= \Int\Aut_k K &&\rtimes \Ext\Aut_k K
		 \label{eq:hofberger-k}
\end{alignat}
\end{theorem}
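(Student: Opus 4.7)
The plan is to deduce all three semi-direct decompositions from the standard group-theoretic fact that a split short exact sequence $1 \to N \to H \to Q \to 1$ yields $H = N \rtimes S$ for the image $S$ of any section $Q \to H$, applied to the sequence
\[
1 \longrightarrow \Int\Aut K \longrightarrow \vAut K \overset{\Phi}{\longrightarrow} \Aut k \times \oAut G \longrightarrow 1
\]
and subsequently restricted to the $k$-stable and $k$-linear subgroups.

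For \eqref{eq:hofberger-plain} I would first observe that $\Int\Aut K = \ker\Phi$ is normal by construction, while the first lifting property furnishes a section $\Psi$ of $\Phi$ with image $\Ext\Aut K$ by Definition~\ref{def-ext-field}. From $\Phi\Psi = \id$ one reads off $\Int\Aut K \cap \Ext\Aut K = \{\id_K\}$: any $\sigma = \Psi(\rho,\tau) \in \Ext\Aut K$ that also lies in $\ker\Phi$ forces $(\rho,\tau) = \Phi\Psi(\rho,\tau) = (\id_k,\id_G)$, hence $\sigma = \Psi(\id_k,\id_G) = \id_K$. For the factorisation, given $\sigma \in \vAut K$ set $\eta := \Psi(\Phi(\sigma))$ and $\iota := \sigma \eta\inv$; then $\Phi(\iota) = \Phi(\sigma)\Phi(\sigma)\inv = (\id_k,\id_G)$, so $\iota \in \Int\Aut K$ and $\sigma = \iota\eta$, which together with normality of $\Int\Aut K$ gives \eqref{eq:hofberger-plain}.

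To upgrade to \eqref{eq:hofberger-k-stable} and \eqref{eq:hofberger-k}, I would intersect the above factorisation with $\vAut_{(k)}K$ and $\vAut_k K$ respectively and check that both factors land in the claimed subgroups. Two ingredients enter: Proposition~\ref{rmk:internal-key-property}, giving $\Int\Aut_{(k)}K = \Int\Aut_k K$, and Remark~\ref{rmk:Psi1-Psi2}(i), which identifies $\Ext\Aut_k K$ with $\Psi(\{\id_k\} \times \oAut G)$ inside $\Ext\Aut_{(k)}K$. Concretely, for $\sigma \in \vAut_k K$ one computes $\sigma_k = \id_k$ via $f = f_c$, so $\eta = \Psi(\id_k,\sigma_G) \in \Ext\Aut_k K$ and the complement $\iota = \sigma\eta\inv$ is both internal and $k$-linear, hence in $\Int\Aut_k K$; this yields \eqref{eq:hofberger-k}. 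For $\sigma \in \vAut_{(k)}K$ the same construction produces $\eta = \Psi(\sigma_k,\sigma_G) \in \Ext\Aut_{(k)}K$ and $\iota \in \Int\Aut_{(k)}K = \Int\Aut_k K$, yielding \eqref{eq:hofberger-k-stable}. Triviality of the intersections and normality of the internal factor inside the smaller ambient groups transfer verbatim (normality, e.g., follows because $\sigma \in \vAut_{(k)}K$ and $\iota \in \Int\Aut_k K$ immediately imply $\sigma\iota\sigma\inv$ fixes $k$ pointwise).

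The only genuinely delicate point, and the one I would verify first, is that the section $\Psi$ furnished by the first lifting property has image in $\vAut_{(k)}K$ --- equivalently, that each $\Psi(\rho,\tau)$ is actually $k$-stable with $\Psi(\rho,\tau)|_k = \rho$. This is exactly what gives Remark~\ref{rmk:Psi1-Psi2}(i) its bite and ensures $\eta = \Psi(\sigma_k,\sigma_G) \in \Ext\Aut_{(k)}K$ in the argument above. It is transparent for the canonical section $\sum a_g t^g \mapsto \sum \rho(a_g) t^{\tau(g)}$ on $\K$, and carries over to any Hahn field with the first lifting property once one unpacks the compatibility of $\Psi$ with $f_c$. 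Once this compatibility is in hand, all three decompositions follow uniformly, and the twisted multiplication rule displayed in the footnote arises automatically from the conjugation action of $\Ext\Aut K$ on the normal subgroup $\Int\Aut K$.
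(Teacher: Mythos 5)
Your proof follows the same route as the paper's: decomposition \eqref{eq:hofberger-plain} comes from the split short exact sequence $\Int\Aut K \hookrightarrow \vAut K \overset{\Phi}{\twoheadrightarrow} \Aut k \times \oAut G$ (you merely unwind the textbook citation the paper uses, writing $\sigma = \bigl(\sigma\Psi(\Phi(\sigma))\inv\bigr)\Psi(\Phi(\sigma))$ explicitly), and \eqref{eq:hofberger-k-stable}, \eqref{eq:hofberger-k} are then obtained, exactly as in the paper, by intersecting with $\vAut_{(k)}K$ and $\vAut_kK$ and using $\Int\Aut_{(k)}K = \Int\Aut_kK$ from Proposition~\ref{rmk:internal-key-property}. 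The one point where you go beyond the paper is the ``delicate point'' you flag at the end, and you are right to flag it: the intersection step is not automatic for subgroups of a semi-direct product, and it requires precisely that the external component $\Psi(\sigma_k,\sigma_G)$ of a $k$-stable $\sigma$ be itself $k$-stable (this is also what makes Remark~\ref{rmk:Psi1-Psi2}(i) work). This is transparent for the canonical section, but for an arbitrary section $\Psi$ your closing assertion that it ``carries over once one unpacks the compatibility of $\Psi$ with $f_c$'' is exactly the step that still needs an argument rather than a gesture --- a section twisted by a cocycle with values in $\Int\Aut K$ that moves elements of $k$ off of $k$ is not obviously excluded by the definition of the first lifting property --- so either supply that verification or state it as a standing hypothesis on $\Psi$.
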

\begin{proof}
	Let $ \Phi $ be defined as in \eqref{eq:Phi} and let $ \Psi $ be a section.
	Consider the sequence
	$$
	\xymatrix{ 
		\Int\Aut K \ar[r]^-\iota &
		\vAut K \ar[r]^-{\Phi_f} &
		\Aut k \times \oAut G \ar@{-->}@/^1pc/[l]^\Psi
	}
	$$
	where $\iota$ is the canonical embedding. By definition of $\Int\Aut K$ we have $\im\iota = \ker\Phi_f$
	so the sequence is exact. Therefore (see~\cite[p.~109]{maclane-homology}) we have:
	\[	\vAut K= \im\iota\rtimes \im\Psi 
	= \Int\Aut K \rtimes \Ext\Aut K.\]
	Equations~\eqref{eq:hofberger-k-stable} and~\eqref{eq:hofberger-k} are obtained from \eqref{eq:hofberger-plain} by taking intersections with $ \vAut_{(k)}K $ and $ \vAut_kK $ respectively and using Proposition~\ref{rmk:internal-key-property}.
\end{proof}

Let $ K $ have the first lifting property. By Theorem~\ref{hofberger-semi-direct}, describing $ \vAut K $ consists of two tasks: describing the normal subgroup $ \Int\Aut K $ and the subgroup $ \Ext\Aut K $. By Corollary~\ref{prop:external-direct}, we have $ \Ext\Aut K\simeq \Aut k \times \oAut G $. A detailed study of $ \oAut G $ in terms of its value set $ v_G(G^{\neq 0}) $ is carried out in \cite{ours-groups}. In the next section we investigate the structure of $ \Int\Aut K $.

\subsection{The canonical second lifting property}\label{subsec:internal}
In this section we study the group $ \Int\Aut K $ in more detail and provide a decomposition into a semi-direct product of two notable subgroups. As announced in part~(iv) of Remark~\ref{coeff-isom} we work with the following:

\begin{defn}\label{def:G-exp}
	Let $ \Hom(G,k^\times)$ be the set of all homomorphisms of the additive group $ (G,+) $ into the multiplicative group $ (k^\times,\cdot) $. Let $ x\in  \Hom(G,k^\times)  $.
	We will denote the image of a $ g\in G $ under $ x $ by $ x^g:=x(g) $, so, for all $ g,h\in G $ we have $ x^{g+h}=x^gx^h $.
	Let $ \mathbf1\colon(G,+)\to(k^\times,\cdot),\ g\mapsto 1 $ be the trivial morphism.
	Then the set $  \Hom(G,k^\times)  $ forms a group under the pointwise multiplication defined by $ (xy)^g:=x^gy^g $.
	The inverse of $ x $ is the morphism $ g\mapsto x^{-g} $ and $ \mathbf1 $ is the neutral element.
\end{defn}

\begin{lemma}\label{X-lemma}
	Let $ K $ be a Hahn field. The map
\begin{equation}\label{eq:Xi}
	X\colon\Int\Aut K \to\Hom(G,k^\times) ,\quad \sigma\mapsto x_\sigma
\end{equation}
where $ x_{\sigma} $ is defined by
\begin{equation}\label{eq:x_sigma}
	x_{\sigma}(g):=x_{\sigma}^g:=\sigma(t^g)_g
\end{equation}
is a group homomorphism.
\end{lemma}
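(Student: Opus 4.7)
The plan is to work directly from the characterisations of internal automorphisms given in Proposition~\ref{rmk:internal-key-property}. The first step is to rewrite $\sigma(t^g)$ for $\sigma\in\Int\Aut K$ in a convenient normal form. Since internality forces $v(\sigma(t^g)) = v(t^g) = g$, the element $\sigma(t^g)t^{-g}$ lies in $U_K = k^\times\cdot(1+I_K)$, and its constant term coincides with the first coefficient $\sigma(t^g)_g = x_\sigma^g$. This gives
\[ \sigma(t^g) = x_\sigma^g\, t^g\, u_g^\sigma, \qquad x_\sigma^g\in k^\times,\ u_g^\sigma \in 1+I_K, \]
which will be the workhorse of the proof and also establishes that $x_\sigma$ genuinely takes values in $k^\times$.

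Next I would show that $x_\sigma\in\Hom(G,k^\times)$. Applying $\sigma$ to $t^{g+h}=t^g t^h$ and inserting the normal form on both sides yields
\[ x_\sigma^{g+h}\, t^{g+h}\, u_{g+h}^\sigma \;=\; x_\sigma^g x_\sigma^h\, t^{g+h}\, (u_g^\sigma u_h^\sigma). \]
Reading off the coefficient at $t^{g+h}$, and using that both $u_{g+h}^\sigma$ and $u_g^\sigma u_h^\sigma$ lie in $1+I_K$ (hence have constant term $1$), gives $x_\sigma^{g+h} = x_\sigma^g x_\sigma^h$.

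The final and most delicate step is the multiplicativity of $X$, i.e.\ verifying that $(\sigma\tau)(t^g)_g = x_\sigma^g\, x_\tau^g$ for $\sigma,\tau\in\Int\Aut K$. Starting from $\tau(t^g) = x_\tau^g\, t^g\, u_g^\tau$ and applying $\sigma$ gives
\[ (\sigma\tau)(t^g) \;=\; \sigma(x_\tau^g)\,\sigma(t^g)\,\sigma(u_g^\tau). \]
The main obstacle here is that an internal automorphism need not fix $k$ pointwise: one may only use that $\sigma$ preserves constant terms \emph{modulo} $I_K$. Concretely, Proposition~\ref{rmk:internal-key-property}(ii) gives $\sigma(x_\tau^g) = x_\tau^g + \varepsilon$ with $\varepsilon\in I_K$, and the characterisation recalled in Remark~\ref{rmk:val-pres-induces-ord} ensures $\sigma(u_g^\tau)\in 1+I_K$. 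Substituting the normal form for $\sigma(t^g)$ and reading off the coefficient of $t^g$ in the product (each of the three factors contributes its leading coefficient, namely $x_\tau^g$, $x_\sigma^g$ and $1$) yields $x_\sigma^g x_\tau^g$, so $X(\sigma\tau)=X(\sigma)X(\tau)$ as claimed.
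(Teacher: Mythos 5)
Your proof is correct and follows essentially the same route as the paper's: both arguments rest on internality forcing $v(\sigma(t^g))=g$ and on the fact that the first coefficient of a product is the product of the first coefficients. Your normal form $\sigma(t^g)=x_\sigma^g\,t^g\,u_g^\sigma$ just packages this more explicitly, and you are slightly more careful than the paper at the one delicate point — that an internal $\sigma$ satisfies only $\sigma(x_\tau^g)_0=x_\tau^g$ rather than $\sigma(x_\tau^g)=x_\tau^g$ — which the paper's step $\sigma(\alpha t^g)_g=\alpha\,\sigma(t^g)_g$ handles implicitly.
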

\begin{proof}
	Let $ \sigma\in \Int\Aut K $ and $ g,h\in G $.
	The formula \eqref{eq:x_sigma} defines an element of $ \Hom(G,k^\times) $. Indeed, by Proposition~\ref{rmk:internal-key-property}, $ g=v(t^g)=v(\sigma(t^g)) $, so $ x_\sigma(g)=\sigma(t^g)_g\in k^\times $. 
	Moreover, we have
	 \[
	 x_\sigma(g+h)=\sigma(t^{g+h})_{g+h} = (\sigma(t^g)\sigma(t^h))_{g+h} = \sigma(t^g)_g\sigma(t^h)_h=x_\sigma(g)x_\sigma(h).
	 \]
	 So $ X $ is a well defined map. To show that it is also a group homomorphism, let $ \sigma,\tau\in\Int\Aut K $. Let $ \alpha:=\tau(t^g)_g $. Then
	 \[
	 x_{\sigma\tau}(g) = (\sigma\tau(t^g))_g=\sigma(\alpha t^g)_g = \alpha\sigma(t^g)_g = x_\sigma(t^g)x_\tau(t^g).
	 \]
\end{proof}

\begin{defn}\label{def:1-aut}
	The kernel $ \ker X $ will be called the group of \emph{1-automorphisms of $ K $} and denoted by $ \uAut K $. Hence $ \uAut K \unlhd \Int\Aut K $.
	We use the notations $ \uAut_{(k)}K :=\uAut K \cap \Aut_{(k)} K $ and $ \uAut_kK:=\uAut K \cap \Aut_kK $.
\end{defn}

\begin{lemma}\label{lemma:1Aut-normal}\label{lemma:1-Aut-k=1-Aut(k)} Let $ K $ be a Hahn field. The following hold: 
	\begin{enumerate}[(i)]
		\item For all $ \tau\in\uAut K $ and all $ a\in K^\times $ we have $ \tau(a)_{v(a)} = a_{v(a)} $;
		\item $ \uAut_{(k)}K = \uAut_k K $.
	\end{enumerate}
\end{lemma}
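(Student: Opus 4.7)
My plan for (i) is to reduce the general case to the two conditions defining a $1$-automorphism, namely that $\tau$ is internal and that $\tau(t^g)_g = x_\tau(g) = 1$ for every $g \in G$. The natural way to combine these is to separate the ``monomial part'' from the ``unit part'' of $a$. Concretely, setting $g := v(a)$, I factor $a = t^g u$ where $u := t^{-g}a \in R_K$ has constant coefficient $u_0 = a_g$. Since $\tau$ is internal, Proposition~\ref{rmk:internal-key-property} gives $v(\tau(u)) = 0$ and $\tau(u)_0 = u_0 = a_g$, so $\tau(u) = a_g + \beta$ with $\beta = 0$ or $v(\beta) > 0$. Since $\tau \in \ker X$ is also internal, we likewise get $\tau(t^g) = t^g + \alpha$ with $\alpha = 0$ or $v(\alpha) > g$.

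Expanding
\[
\tau(a) \;=\; \tau(t^g)\tau(u) \;=\; (t^g + \alpha)(a_g + \beta) \;=\; a_g t^g + t^g \beta + a_g \alpha + \alpha\beta,
\]
each of the last three summands has valuation strictly greater than $g$, so the coefficient at $g$ of $\tau(a)$ equals $a_g = a_{v(a)}$, which establishes (i).

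For (ii), the inclusion $\uAut_k K \subseteq \uAut_{(k)} K$ is tautological, since every $k$-automorphism is in particular $k$-stable. For the reverse inclusion, given $\tau \in \uAut_{(k)} K$ and $c \in k^\times$, part (i) applied at $v(c) = 0$ yields $\tau(c)_0 = c_0 = c$, while $k$-stability forces $\tau(c) \in k$ and hence $\tau(c) = \tau(c)_0 = c$. One could alternatively short-circuit this argument by invoking the identity $\Int\Aut_{(k)} K = \Int\Aut_k K$ from Proposition~\ref{rmk:internal-key-property} together with the inclusion $\uAut K \subseteq \Int\Aut K$. The only genuine content lies in (i), and the only subtlety is the bookkeeping: $\tau$ need not fix $t^g$ or $u$ pointwise, only their leading coefficients, so one must exploit the multiplicative factorisation $a = t^g u$ rather than any additive decomposition.
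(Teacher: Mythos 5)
Your proof is correct and follows essentially the same route as the paper's: isolate the leading term of $a$, then use internality to pin down the constant coefficient of the unit factor and the condition $x_\tau=\mathbf{1}$ to pin down the leading coefficient of $\tau(t^g)$, so that the leading coefficient of the product is $a_{v(a)}$. The only cosmetic difference is that the paper begins with the additive split $a=a_ht^h+b$ before multiplying out $\tau(a_h)\tau(t^h)$, whereas you factor $a=t^gu$ multiplicatively from the outset; your closing remark that an additive decomposition cannot work is therefore slightly overstated, but this has no bearing on the validity of the argument.
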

\begin{proof}
	\begin{enumerate}[(i)]
		\item By definition of $ \uAut K $, if $ \tau\in\uAut K $ and $ a\in K^\times $ with $ v(a)=h $ we have $ \tau(a) = \tau\brackets{a_ht^h + b} $ with $ v(b)>h $. Therefore
		$ \tau(a) = \tau(a_ht^h)+\tau(b)= a_ht^h +c +\tau(b) $ for some $ b,c\in K $ with $ v(\tau(b))>h $ and $ v(c)>h $.
		\item 
		An automorphism that fixes the first coefficient of every series and keeps $ k $ invariant is necessarily trivial on $ k $.
	\end{enumerate}
\end{proof}

\begin{defn}\label{def:G-exp-closed}
	We say that a Hahn field $ K $ satisfies the  \emph{canonical second lifting property} if the map
	\begin{equation}\label{eq:embed-E(G,k)-in-Int}
		P\colon \Hom(G,k^\times) \rightarrow\Int\Aut K,\quad x\mapsto\rho_x
	\end{equation}
	where $ \rho_x $ is given by 
	\begin{equation}\label{eq:rho-x-formula}
		\rho_x(\sum a_gt^g) = \sum a_gx^gt^g
	\end{equation}
is a well defined group homomorphism.
\end{defn}

\begin{prop}\label{prop:embed-G-Exp}\label{rmk:G-Exp-properties-hom}
	
Let $ K $ satisfy the canonical \footnote{In analogy to Definition~\ref{def-lifting-fields} one could define a \emph{general} second lifting property. However, in this paper we only work with the canonical second lifting property.} second lifting property. Then the map $ P $ of Definition~\ref{def:G-exp-closed} is injective and a section of $ X $, that is $ XP = \id_{ \Hom(G,k^\times) } $. In particular, $ X $ is surjective.

\end{prop}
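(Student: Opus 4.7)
The plan is to reduce everything to the single identity $XP = \id_{\Hom(G,k^\times)}$, since both the injectivity of $P$ and the surjectivity of $X$ are formal consequences of that one identity. The canonical second lifting property has already been assumed to guarantee that $P$ lands in $\Int\Aut K$ and is a group homomorphism, so the proof really only asks for a check on monomials.

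The main computation I would carry out is the following. Fix an arbitrary $x \in \Hom(G,k^\times)$ and unfold $XP(x) = X(\rho_x) = x_{\rho_x}$. For every $g \in G$, definition \eqref{eq:x_sigma} gives
\[
x_{\rho_x}(g) = \rho_x(t^g)_g.
\]
The monomial $t^g$ is the series whose only nonzero coefficient equals $1$ in position $g$, so applying formula \eqref{eq:rho-x-formula} yields $\rho_x(t^g) = x^g t^g$, whence $\rho_x(t^g)_g = x^g = x(g)$. Thus $x_{\rho_x} = x$, which is exactly $XP(x) = x$, and since $x$ was arbitrary we obtain $XP = \id_{\Hom(G,k^\times)}$.

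From this, the remaining two assertions are immediate. If $P(x) = P(y)$ then $x = XP(x) = XP(y) = y$, so $P$ is injective. For surjectivity of $X$, note that every $y \in \Hom(G,k^\times)$ satisfies $y = X(P(y))$, hence $y \in \im X$. There is no real obstacle to overcome: the difficulty has been packaged into the hypothesis that $P$ is a well-defined homomorphism (i.e.\ that the formula \eqref{eq:rho-x-formula} really does produce a valuation preserving, internal automorphism of $K$), and once that is granted the proposition becomes a one-line verification on the basis elements $t^g$.
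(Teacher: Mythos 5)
Your proof is correct and follows essentially the same route as the paper: the key step in both is the evaluation $\rho_x(t^g)_g = (x^g t^g)_g = x^g$ on monomials, giving $XP=\id$. The only (cosmetic) difference is that the paper verifies injectivity of $P$ directly from $\rho_x(t^g)=\rho_y(t^g)$, whereas you deduce it formally from the section identity; both are equally valid.
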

\begin{proof}
	Let $ x,y\in  \Hom(G,k^\times)  $ and let $ a = \sum a_gt^g\in K $. If $ \rho_x = \rho_y $ then, for all $ g\in G $ we have $ x^gt^g = \rho_x(t^g) = \rho_y(t^g) = y^gt^g $,
	which implies $ x^g=y^g $ for all $ g $ and so $ x=y $. So $ P $ is injective.
	
	\noindent
	Moreover, for all $ g\in G $ we have
	\[
	X P(x)(g)=X(\rho_x)(g) =(\rho_x(t^g))_g = (x^gt^g)_g=x^g,
	\]
	thus $ XP(x)=x $ which proves $ X P = \id_{ \Hom(G,k^\times) } $ and, in particular, $ X $ is surjective.
\end{proof}

\begin{defn}\label{def:g-exp}
	Let $ K $ satisfy the canonical second lifting property.
	The subgroup $ \{\rho_x:x\in \Hom(G,k^\times) \}=\im  P \leq \Int\Aut K $ is called the group of \emph{$ G $-exponentiations on $ K $} and denoted by $ \gexp{G}K $.
	\\
	Clearly we have $ \gexp{G}K\simeq \Hom(G,k^\times) $, hence it only depends on $ G $ and $ k^\times $.
\end{defn}

\begin{rmk}\label{rmk:G-Exp-properties}
Let $ K $ be a Hahn field satisfying the canonical second lifting property. The following assertions hold.
	\begin{enumerate}[(i)]
		\item We could define a notion of \emph{general second lifting property} along with the canonical one, similarly to what we did for the first lifting property. We refrain from doing so in this paper and refer the interested reader to~\cite{michelethesis}.
		\item 
		Composing the two maps from Proposition~\ref{prop:embed-G-Exp} we get a homomorphism
		\[
		P X\colon\Int\Aut K \to \gexp{G}K,\ \sigma\mapsto\rho_{x_\sigma}
		\]
		that associates to an internal automorphism $ \sigma $ its $ \gexp{G}K $ component.
		\item 
		By \eqref{eq:rho-x-formula}, the inverse of $ \rho_x $ is given by
		\begin{equation}\label{eq:rho-x-inv}
			\rho_x\inv\left( \sum a_gt^g \right) = \sum a_gx^{-g}t^g.
		\end{equation}
		\item All $G$-exponentiations are trivial on $ k $, so we have $ \gexp{G}K\leq \Aut_kK $.
		\item For all $ \rho\in\gexp{G}K $ and for all $ a\in K $ we have $ \supp\rho(a) = \supp a $.
		\qed
	\end{enumerate}
\end{rmk}

\begin{ess}\label{can-sec-LP:examples}
	\begin{enumerate}[(i)]
		\item The maximal Hahn field $ \K $ satisfies the canonical second lifting property. Indeed, for all $ a = \sum a_gt^g\in \K $ and all $ x\in\Hom(G,k^\times) $ the element $ \rho_x(a)=\sum a_gx^gt^g $ has the same support as $ a $ (see Remark~\ref{rmk:G-Exp-properties}(iv)), hence $ \rho_x(a)\in \K $ and $ P $ is well defined, as required.
		\item 
		The field $ k(G) $ satisfies the canonical second lifting property. Indeed, let $ a = c/d \in k(G) $ for $ c,d\in \K $ with finite support. Let $ x\in\Hom(G,k^\times) $ and consider $ \rho_x $ as an automorphism of $ \K $. Then
		\[
		\rho_x\brackets{\frac{c}{d}} = \frac{\rho_x(c)}{\rho_x(d)} \in k(G)
		\]
		because $ \supp(c)=\supp(\rho_x(c)) $ and $ \supp(d) = \supp(\rho_x(d)) $ are finite.
		\item 
		A large class of Hahn fields satisfying the canonical second lifting property will be described in Section~\ref{subsec:rayner}.
		\qed
	\end{enumerate}
\end{ess}

\noindent
The next result characterises internal automorphisms as products of $G$-exponen\-tiations and 1-automorphisms.

\begin{lemma}
	\label{lemma:int_decomp_rho_tau}
	Let $ K $ satisfy the canonical second lifting property and let
	$ \sigma\in\vAut K $. Then
	$ \sigma \in \Int\Aut K$ if and only if there exist $ \rho  \in \gexp{G}K$ and $ \tau\in \uAut K $ such that $ \sigma = \rho \tau $.
\end{lemma}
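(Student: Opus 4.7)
The plan is to use the section $P\colon \Hom(G,k^\times) \to \Int\Aut K$ provided by the canonical second lifting property (Definition~\ref{def:G-exp-closed}) to split every internal automorphism into a $G$-exponentiation and a $1$-automorphism, essentially via the splitting lemma applied to the short exact sequence
\[
1 \longrightarrow \uAut K \longrightarrow \Int\Aut K \xrightarrow{\ X\ } \Hom(G,k^\times) \longrightarrow 1,
\]
where exactness at the right end is Proposition~\ref{prop:embed-G-Exp}.

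For the easy direction ($\Leftarrow$), note that $\gexp{G}K = \im P \subseteq \Int\Aut K$ by the definition of $P$, and $\uAut K = \ker X \subseteq \Int\Aut K$ by the definition of $\uAut K$ (Definition~\ref{def:1-aut}). Hence any product $\rho\tau$ with $\rho \in \gexp{G}K$ and $\tau \in \uAut K$ lies in the subgroup $\Int\Aut K$, and there is nothing more to check.

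For the non-trivial direction ($\Rightarrow$), given $\sigma \in \Int\Aut K$ I would define the candidate factors explicitly by setting
\[
\rho := P(x_\sigma) = \rho_{x_\sigma} \in \gexp{G}K \qquad\text{and}\qquad \tau := \rho^{-1}\sigma.
\]
Since both $\rho$ and $\sigma$ are internal and $\Int\Aut K$ is a group, $\tau$ automatically lies in $\Int\Aut K$, so the only point to verify is that $\tau \in \ker X = \uAut K$. This is a direct computation using that $X$ is a group homomorphism (Lemma~\ref{X-lemma}) and that $XP = \id_{\Hom(G,k^\times)}$ (Proposition~\ref{prop:embed-G-Exp}):
\[
X(\tau) \;=\; X(\rho^{-1})\,X(\sigma) \;=\; \bigl(XP(x_\sigma)\bigr)^{-1} x_\sigma \;=\; x_\sigma^{-1} x_\sigma \;=\; \mathbf{1}.
\]
Hence $\tau \in \uAut K$ and $\sigma = \rho\tau$ is the required factorisation.

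There is no real obstacle here: once the section $P$ is available (which is exactly the content of the canonical second lifting property), the proof reduces to the standard splitting argument for a semi-direct product. If anything, the only subtlety worth flagging is the order of composition — writing $\sigma = \rho\tau$ rather than $\tau\rho$ — but since $\rho$ is chosen on the left, setting $\tau := \rho^{-1}\sigma$ makes the computation of $X(\tau)$ immediate from the homomorphism property of $X$ and the identity $X(\rho) = x_\sigma$.
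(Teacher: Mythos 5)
Your proof is correct and takes essentially the same route as the paper's: both directions are argued identically, and in particular the paper also sets $\rho := \rho_{x_\sigma}$ and $\tau := \rho^{-1}\sigma$. The only difference is cosmetic: the paper verifies $\tau\in\uAut K$ by a direct coefficient computation (re-deriving along the way that $g\mapsto\sigma(t^g)_g$ is multiplicative), whereas you obtain the same conclusion more economically from the homomorphism property of $X$ (Lemma~\ref{X-lemma}) and the identity $XP=\id$ (Proposition~\ref{prop:embed-G-Exp}).
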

\begin{proof}
	Since $ \Int\Aut K $ is a group, a composition of internal automorphisms is internal. So if there exist $ \rho, \tau $ as in the statement, then in particular $ \rho,\tau \in \Int\Aut K $, therefore $ \sigma=\rho \tau\in\Int\Aut K $. 
	
	\noindent
	Conversely, let $ \sigma \in\Int\Aut K $.
	For all $ g\in G\ $ let $ x^g:= \sigma(t^g)_g $ be the first coefficient of $ \sigma(t^g) $. Notice that the elements $ \{x^g:g\in G\} $ have the property that
	\begin{equation}
	\label{G-exp-property}
	x^gx^h = x^{g+h}.
	\end{equation} 
	Indeed $ x^{g+h} $ is the first coefficient of $ \sigma(t^{g+h}) = \sigma(t^gt^h)=\sigma(t^g)\sigma(t^h) $ and the first coefficient of the last series is the product of the first coefficients of the factors. Hence the map $ x\colon G\to k^\times,\ g\mapsto x^g $ is an element of $  \Hom(G,k^\times)  $, and the corresponding $ \rho_x $ defined as in \eqref{eq:embed-E(G,k)-in-Int} is a $ G $-exponentiation on $ K $.
	Set $ \rho = \rho_x $ and let $ \tau := \rho\inv \sigma $.
	Obviously we have $ \sigma = \rho \tau $, so we just need to show that $ \tau \in \uAut K $.
	Let $ a\in K $ and let $ h = v(a) $.
	Then we have 
	\begin{align*}
	\tau(a)_h &= (\rho\inv\sigma(a))_h 
	= (\rho\inv\sigma(a_ht^h))_h 
	= (\rho\inv\sigma(a_h)\cdot\rho\inv\sigma(t^h))_h\\
	&= (\rho\inv\sigma(a_h))_0\cdot(\rho\inv\sigma(t^h))_h
	=a_h (x^h)\inv\sigma(t^h)_h
	=a_h.
	\end{align*}
	So $ \tau\in\uAut K $ and the proof is complete.
\end{proof}

\subsection{The group of internal automorphisms}\label{subsec:intern-decomp}
The next proposition gives a decomposition of $ \Int\Aut K $ that will be used to further refine Theorem~\ref{hofberger-semi-direct}.

\begin{theorem}\label{prop:internal-semi-direct}
	Let $ K $ satisfy the canonical second lifting property. Then
	the group $ \Int\Aut K $ (resp.\ $ \Int\Aut_kK $) admits the following semi-direct product decomposition:
\begin{align}
	\Int\Aut K &= \uAut K \rtimes \gexp{G} K \label{eq:Int-decomp-general}\\
	\Int\Aut_{(k)} K = \Int\Aut_kK &= \uAut_k K \rtimes \gexp{G} K\label{eq:Int-decomp-k-stable}
\end{align}
\end{theorem}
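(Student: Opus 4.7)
The plan is to recognise \eqref{eq:Int-decomp-general} as the splitting of a short exact sequence whose constituent maps are already at our disposal. By Lemma~\ref{X-lemma} and Proposition~\ref{prop:embed-G-Exp}, the sequence
$$1 \longrightarrow \uAut K \hookrightarrow \Int\Aut K \stackrel{X}{\longrightarrow} \Hom(G,k^\times) \longrightarrow 1$$
is short exact ($\uAut K = \ker X$ by Definition~\ref{def:1-aut}, and surjectivity of $X$ is part of Proposition~\ref{prop:embed-G-Exp}), and $P$ provides a section. Since $\im P = \gexp{G}K$ by Definition~\ref{def:g-exp}, the standard correspondence between split short exact sequences and internal semi-direct products yields \eqref{eq:Int-decomp-general}.

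Equivalently, and more explicitly, Lemma~\ref{lemma:int_decomp_rho_tau} furnishes the product decomposition $\Int\Aut K = \gexp{G}K \cdot \uAut K$ at the level of elements. Normality of $\uAut K$ in $\Int\Aut K$ is automatic, being the kernel of the homomorphism $X$, and the intersection $\uAut K \cap \gexp{G}K$ is trivial: any $\rho_x \in \gexp{G}K$ lying in $\ker X$ satisfies $x = X(\rho_x) = XP(x) = \mathbf{1}$, whence $\rho_x = \id$. Together these observations give \eqref{eq:Int-decomp-general}.

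For the $k$-stable version I would first invoke Proposition~\ref{rmk:internal-key-property} to identify $\Int\Aut_{(k)}K$ with $\Int\Aut_kK$, and Remark~\ref{rmk:G-Exp-properties}(iv) to note that $\gexp{G}K \subseteq \Aut_k K$. Given $\sigma \in \Int\Aut_k K$, the decomposition $\sigma = \rho\tau$ with $\rho \in \gexp{G}K$ and $\tau \in \uAut K$ from Lemma~\ref{lemma:int_decomp_rho_tau} forces $\tau = \rho^{-1}\sigma \in \Aut_k K$, so $\tau \in \uAut K \cap \Aut_k K = \uAut_k K$ by Lemma~\ref{lemma:1-Aut-k=1-Aut(k)}. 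The trivial intersection is inherited from the ambient decomposition, and $\uAut_k K = \uAut K \cap \Int\Aut_k K$ is normal in $\Int\Aut_k K$, being the intersection of the normal subgroup $\uAut K$ with the subgroup $\Int\Aut_k K$ of $\Int\Aut K$. This produces \eqref{eq:Int-decomp-k-stable}.

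There is no genuine obstacle left at this stage: all the essential content has already been absorbed into the construction of the section $P$ (which relies on the canonical second lifting property) and into Lemma~\ref{lemma:int_decomp_rho_tau}. The present theorem is the organisation of those facts into a semi-direct product statement, together with the straightforward bookkeeping required to intersect with $\Aut_k K$.
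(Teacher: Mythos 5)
Your proof is correct and follows essentially the same route as the paper: the paper also establishes \eqref{eq:Int-decomp-general} by observing that $\uAut K \hookrightarrow \Int\Aut K \xrightarrow{X} \Hom(G,k^\times)$ is exact with section $P$, and obtains \eqref{eq:Int-decomp-k-stable} from $\gexp{G}K \leq \Aut_k K$ together with $\uAut_{(k)}K = \uAut_k K$. Your additional explicit verification of trivial intersection and the element-level decomposition via Lemma~\ref{lemma:int_decomp_rho_tau} is a harmless elaboration of the same argument.
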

\begin{proof}
	Consider the sequence
		\[
	\xymatrix{
		\uAut K \ar@{^{(}->}[r]^{\iota} & \Int\Aut K \ar@{->>}[r]^{X} &  \Hom(G,k^\times)  \ar@{-->}@/^1pc/[l]^P
	}
	\]
	where $ \iota $ is the canonical embedding. By Lemma~\ref{lemma:1Aut-normal} we have $ \ker X = \uAut K = \im \iota $ so the sequence is exact and, by Proposition~\ref{prop:embed-G-Exp}, $ P $ is a section of $ X $. Hence \eqref{eq:Int-decomp-general} follows.
	
	By Remark~\ref{rmk:G-Exp-properties} we have $ \gexp{G}K\leq \vAut_kK $ and by Lemma~\ref{lemma:1Aut-normal} we have $ \uAut_{(k)}K=\uAut_k K $, so $ \eqref{eq:Int-decomp-k-stable} $ follows.
\end{proof}

\begin{rmk}
	We noted (Remark~\ref{not:general-KvG-2}) that $ \Hom(G,k^\times) $ is a direct factor of the group $ \Hom(G,U) = \Hom(G,1+I_K)\times \Hom(G,k^\times) $.
	Under some further assumptions, in Section~\ref{subsec:general} we will be able to relate $ \uAut_k K $ to $ \Hom(G,1+I_K) $ (with a twisted group operation), thereby relating $ \Int\Aut K $ to $ \Hom(G,U) $.
	\qed
\end{rmk}

\subsection{The canonical first lifting property}\label{sec:CLP}

Recall that (Remark~\ref{coeff-isom}) for any Hahn field $ K $ there is an isomorphism $ f\colon \bar K \to k $. We can choose this isomorphism canonically to be $ f_c\colon a+I_K\mapsto a_0 $, for all $ a \in R_K $.
We call $ f_c $ the \emph{canonical} or \emph{coefficient isomorphism} between $ \bar K $ and $ k $.
The homomorphism $ \Phi_{K,f} $ defined in \eqref{eq:Phi} implicitly depends on the choice of $ f $:
from now on we fix this to be the coefficient isomorphism $ f_c $.
Then $ \Phi_{K,f} $ assumes the special form
\begin{equation}\label{eq:Phi-can}
	\Phi_c\colon  \vAut K  \longrightarrow  \Aut k \times \oAut G,\qquad
	\sigma \longmapsto (\sigma_k,\sigma_G)
\end{equation}
where $ \sigma_k = f_c\bar\sigma f_c\inv $. Computing gives $ f_c\bar\sigma f_c\inv(a_0) = f_c\bar \sigma  (a_0 +I_K) = f_c(\sigma(a_0) +I_K) = \sigma(a_0)_0 $, for all $ a_0\in k $. Thus 
\begin{equation}\label{eq:sigma_k-on-k}
	 \sigma_k(a_0)=\sigma(a_0)_0 \quad \text{for all } a_0\in k.
\end{equation}

\begin{rmk}\label{rmk:induced=restricted}
	\label{rmk:k-normal-in-(k)}
	Let $ \sigma\in \vAut_{(k)}K $.
	Then $ \sigma|_k = \sigma_k $.
	Indeed, for $ a_0\in k $, from $ \sigma\in\vAut_{(k)}K $ it follows that $ \sigma(a_0)\in k $ so Equation~\eqref{eq:sigma_k-on-k} gives $ \sigma_k(a_0) = \sigma(a_0)_0 = \sigma(a_0) = \sigma|_k(a_0) $.		
	Moreover, let $ \pi_1:\Aut k \times \oAut G\to\Aut k,\ (\rho,\tau)\mapsto\rho $ be the projection on the first component.
	Then the restriction $ \pi_1\Phi_c\colon\vAut_{(k)}K\to \Aut k $ is a homomorphism with kernel $ \vAut_{k} K $.
	Thus $ \vAut_{k} K \unlhd \vAut_{(k)}K $.
	\qed
\end{rmk}

\begin{defn}\label{def:canonical-lift}
	Let $ (\rho,\tau) \in\Aut k\times\oAut G $. The automorphism $ \widetilde{\rho\tau}\in\vAut \K $ given by
	\begin{equation}\label{eq:canonical-lfit}
	\widetilde{\rho\tau}\left( \sum_{g\in G}a_g t^g \right) = 
	\sum_{g\in G} \rho(a_g) t^{\tau(g)} 
	\end{equation}
	is a lift of the pair $ (\rho,\tau) $ to $ \K $ that we call \emph{the canonical lift}. Indeed, we have $ \Phi_c(\widetilde{\rho\tau})=(\rho,\tau) $.

	\noindent
	We will denote by $ \tilde{\rho} $ the lift of $ (\rho,\id_G) $ and simply refer to it as the lift of $ \rho\in\Aut k $. Similarly for $ \tilde\tau $.
\end{defn}
\begin{rmk}
	If $ k $ is an ordered field, $ \rho \in \oAut k $ and we take the induced lexicographic ordering on $ \K $, then the canonical lift of a pair $ (\rho,\tau)\in\oAut k\times \oAut G $ preserves the lexicographic ordering on $ \K $.\qed
\end{rmk}
\begin{lemma}\label{lemma:canonical-section}
	Let $ K $ be a Hahn field such that
	\begin{equation}\label{eq:CLP}
	\forall\ (\rho,\tau)\in \Aut k\times\oAut G:\quad \widetilde{\rho\tau}(K)=K.
	\end{equation}
	Then the map $ \Psi_c\colon \Aut k\times\oAut G\to\vAut K,\ (\rho,\tau)\mapsto\widetilde{\rho\tau}|_K $ is a section of $ \Phi_c $.
\end{lemma}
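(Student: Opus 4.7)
The proof is essentially a direct verification using the explicit formula \eqref{eq:canonical-lfit} for the canonical lift, with the hypothesis \eqref{eq:CLP} doing the crucial work of ensuring well-definedness.

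First I would verify that $\Psi_c$ is well-defined as a map into $\vAut K$. The hypothesis \eqref{eq:CLP} guarantees that $\widetilde{\rho\tau}(K)=K$ for every pair $(\rho,\tau)\in\Aut k\times\oAut G$. Since $\widetilde{\rho\tau}$ is already an automorphism of $\K$, the restriction $\widetilde{\rho\tau}|_K$ is a well-defined automorphism of $K$. Moreover, reading off \eqref{eq:canonical-lfit} we see that for any $a=\sum a_g t^g\in K^\times$, $\min\supp(\widetilde{\rho\tau}(a))=\tau(\min\supp a)=\tau(v(a))$, so $v(\widetilde{\rho\tau}(a))=\tau(v(a))$, which means $\widetilde{\rho\tau}|_K$ is valuation preserving and the induced value-group automorphism is $\tau$.

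Next I would check that $\Psi_c$ is a group homomorphism. Let $(\rho_1,\tau_1),(\rho_2,\tau_2)\in\Aut k\times\oAut G$. Using \eqref{eq:canonical-lfit} twice and the fact that $\rho_i$ commutes with sums (being a field automorphism on $k$), I compute
\[
\widetilde{\rho_1\tau_1}\bigl(\widetilde{\rho_2\tau_2}(\sum a_g t^g)\bigr)
= \widetilde{\rho_1\tau_1}\bigl(\sum \rho_2(a_g)t^{\tau_2(g)}\bigr)
= \sum \rho_1\rho_2(a_g)\,t^{\tau_1\tau_2(g)}
= \widetilde{(\rho_1\rho_2)(\tau_1\tau_2)}(\sum a_g t^g),
\]
so the composition matches the canonical lift of the product, and restricting to $K$ preserves this identity.

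Finally I would verify the section identity $\Phi_c\circ\Psi_c=\id$. Given $(\rho,\tau)\in\Aut k\times\oAut G$, set $\sigma=\widetilde{\rho\tau}|_K$. The induced value-group automorphism $\sigma_G$ equals $\tau$ by the computation in the first step. For the residue-field component, by \eqref{eq:sigma_k-on-k} we have $\sigma_k(a_0)=\sigma(a_0)_0$ for every $a_0\in k$; since $a_0=a_0\cdot t^0$, \eqref{eq:canonical-lfit} gives $\sigma(a_0)=\rho(a_0)t^0=\rho(a_0)$, hence $\sigma_k(a_0)=\rho(a_0)$, i.e.\ $\sigma_k=\rho$. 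Therefore $\Phi_c(\Psi_c(\rho,\tau))=(\rho,\tau)$, as required. Injectivity of $\Psi_c$ is then automatic from $\Phi_c\Psi_c=\id$.

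There is no real obstacle: the only nontrivial ingredient is the hypothesis \eqref{eq:CLP}, which is precisely what is needed to move from the canonical lift on $\K$ to an honest map into $\vAut K$. Everything else is a line of bookkeeping with \eqref{eq:canonical-lfit}.
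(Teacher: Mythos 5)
Your proof is correct and follows essentially the same route as the paper: the same direct computation shows $\Psi_c$ is a homomorphism, and the same evaluation of $\sigma_k$ and $\sigma_G$ on the canonical lift gives $\Phi_c\Psi_c=\id$. The only (harmless) differences are that you deduce injectivity from $\Phi_c\Psi_c=\id$ rather than checking it directly as the paper does, and you make the well-definedness step via \eqref{eq:CLP} explicit, which the paper leaves implicit.
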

\begin{proof}
	Let $ (\rho,\tau)$ and $(\rho',\tau')$ be elements of $\Aut k \times \oAut G $ and let $ \widetilde{\rho\tau},\widetilde{\rho'\tau'} $ be the respective canonical lifts. Then, for all $ a = \sum a_gt^g\in K $, we have
	
	\begin{align*}
	\Psi_c(\rho \rho',\tau \tau')(a) 
	&= \sum \rho(\rho'(a_g))t^{\tau(\tau'(g))} \\
	&= \Psi_c(\rho,\tau)\left( \sum \rho'(a_g) t^{\tau'(g)} \right) \\
	&= \Psi_c(\rho,\tau)\left( \Psi_c(\rho',\tau')\left(a\right) \right).
	\end{align*}
	
	For injectivity, let $ (\rho,\tau)\neq(\rho',\tau') $. Then
	if $ \rho(\alpha)\neq\rho'(\alpha) $ for some $ \alpha\in k $ then $ \widetilde{\rho\tau}(\alpha) = \rho(\alpha)\neq\rho'(\alpha) = \widetilde{\rho'\tau'}(\alpha) $;
	similarly, if $ \tau(g)\neq\tau'(g) $ for some $ g\in G $ then $ \widetilde{\rho\tau}(t^g)=t^{\tau(g)}\neq t^{\tau'(g)} = \widetilde{\rho'\tau'}(t^g) $.
	
	\noindent Finally, we prove that $ \Phi_c \Psi_c = \id_{\Aut k\times\oAut G} $.
	Let $ (\rho,\tau)\in\Aut K \times \oAut G $ and let $ \sigma = \Psi_c(\rho,\tau) $. Then, for all $ a=\sum a_gt^g\in K $ we have $ \sigma(a)=\sum \rho(a_g)t^{\tau(g)} $. Then $ \Phi_c(\sigma)=(\sigma_k,\sigma_G) $ where $ \sigma_k $ is defined by $ \sigma_k(a_0) = \sigma(a_0)_0 = \rho(a_0) $ and $ \sigma_G $ is defined by $ \sigma_G(v(a)) = v(\sigma(a)) = v\brackets{\sum_{g\geq v(a)}\rho(a_g)t^{\tau(g)}} = \tau(v(a)) $. So $ (\sigma_k,\sigma_G) = (\rho,\tau) $ and thus $ \Phi_c\Psi_c=\id $.
\end{proof}	
\begin{defn}\label{def:CLP}
	Let $ K $ be a Hahn field satisfying \eqref{eq:CLP}. We say that $ K $ has the \emph{canonical first lifting property} and we call $ \Psi_c $ the \emph{canonical section (on $ K $)} of $ \Phi_c $.
\end{defn}
\noindent \textbf{Whenever $ K $ has the canonical first lifting property, we will assume our chosen section to be the canonical one.}

\begin{rmk}\label{rmk:canonical-lift-k-stable}
	\begin{enumerate}
		\item 
	For every pair $ (\rho,\tau) \in \Aut k \times \oAut G $ we have $ \Psi_c(\rho,\tau)\in \vAut_{(k)}K $.
	\item 
	Assume that $ K $ has the canonical first lifting property and let $ f = \rho_ff_c \in\Aut k $ (see Remark~\ref{coeff-isom}). Then an explicit section $ \Psi_{K,f} $ of $ \Phi_{K,f} $ is given by the formula:
	\begin{align}\label{eq:fLP}
	\begin{split}
	\Psi_{K,f}({\rho,\tau}) &= \Psi_{K,c}({\rho_f\inv\rho\rho_f,\tau})\\
	\Psi_{K,f}({\rho,\tau})\left( \sum_{g\in G}a_g t^g \right) &= 
	\sum_{g\in G} \rho_f\inv\rho\rho_f(a_g) t^{\tau(g)} 
	\end{split}
	\end{align}

	\end{enumerate}\qed
\end{rmk}

\begin{lemma}\label{lemma:k-stable-semidirect}
	Let $ K $ be a Hahn field with the canonical first lifting property. Then
	\[
	\vAut_{(k)}K \simeq \vAut_{k}K\rtimes \Aut k.
	\]
\end{lemma}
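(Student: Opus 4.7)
The plan is to exhibit the semidirect product as arising from a split short exact sequence
\[
1 \longrightarrow \vAut_{k}K \longrightarrow \vAut_{(k)}K \xrightarrow{\ \pi_1\Phi_c\ } \Aut k \longrightarrow 1,
\]
where $\pi_1\colon\Aut k\times\oAut G\to\Aut k$ is the projection on the first factor. First I would recall from Remark~\ref{rmk:k-normal-in-(k)} that the composite $\pi_1\Phi_c$ restricted to $\vAut_{(k)}K$ is a group homomorphism with kernel exactly $\vAut_{k}K$, and that $\vAut_{k}K\unlhd \vAut_{(k)}K$. This already gives exactness on the left and identifies the kernel; it only remains to see that the map is surjective and that there is a homomorphic section.

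Both surjectivity and the splitting will come from the canonical first lifting property via the canonical lift (Definition~\ref{def:canonical-lift}). Concretely, I would define
\[
s\colon \Aut k \longrightarrow \vAut_{(k)}K,\qquad \rho\longmapsto \widetilde{\rho\,\id_G}=\Psi_c(\rho,\id_G).
\]
By the canonical first lifting property $\widetilde{\rho\,\id_G}$ is an element of $\vAut K$; from the explicit formula \eqref{eq:canonical-lfit} it sends $k$ onto $k$ (it restricts to $\rho$ on $k$), so $s(\rho)\in\vAut_{(k)}K$, as observed in Remark~\ref{rmk:canonical-lift-k-stable}(1). That $s$ is a group homomorphism is the content of Lemma~\ref{lemma:canonical-section}, restricted to pairs of the form $(\rho,\id_G)$. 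Finally, for $\rho\in\Aut k$ the induced automorphism $s(\rho)_k$ equals $s(\rho)|_k=\rho$ by Remark~\ref{rmk:induced=restricted}, hence $\pi_1\Phi_c\circ s=\id_{\Aut k}$; this gives surjectivity and splits the sequence.

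With the split exact sequence in place, the standard splitting lemma (cf.\ the argument in the proof of Theorem~\ref{hofberger-semi-direct}) yields the inner semi-direct product
\[
\vAut_{(k)}K \;=\; \vAut_{k}K \;\rtimes\; s(\Aut k) \;\simeq\; \vAut_{k}K \rtimes \Aut k.
\]
There is no real obstacle: the only point that needs some care is checking that the canonical lift $\widetilde{\rho\,\id_G}$ actually lies in $\vAut_{(k)}K$ (not merely in $\vAut K$), but this is immediate from formula~\eqref{eq:canonical-lfit}, which shows that constants in $k$ are sent to $\rho(k)=k$.
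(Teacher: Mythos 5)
Your proof is correct and follows essentially the same route as the paper: both exhibit $\vAut_kK$ as the kernel of $\pi_1\Phi_c$ on $\vAut_{(k)}K$ and split the resulting exact sequence with the canonical lift $\rho\mapsto\Psi_c(\rho,\id_G)$, which lands in $\vAut_{(k)}K$ by Remark~\ref{rmk:canonical-lift-k-stable}. You supply somewhat more detail than the paper (explicitly checking surjectivity and that the section restricts to $\rho$ on $k$), but the argument is the same.
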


\begin{proof}
	By Remark~\ref{rmk:k-normal-in-(k)} we have $ \vAut_k K = \ker \pi_1\Phi_c $, so the sequence
	\[
	\vAut_{k} K \hookrightarrow \vAut_{(k)}K \overset{\pi_1\Phi_c}{\twoheadrightarrow} \Aut k
	\]
	is exact.
	Because $ \Psi_c $ is a section of $ \Phi_c $ it follows that the map $ \Aut k \to\vAut_{(k)}K$, $ \rho\mapsto\tilde\rho = \Psi_c(\rho,\id_G) $ is a section of $ \pi_1\Psi_c $.
	The statement follows.
\end{proof}

\subsection{Rayner fields}
\label{subsec:rayner}
Now we are going to study a class of Hahn fields, which satisfy the canonical first and second lifting property.
\begin{defn}\label{def:field-family}
	Let $ G $ be a non-trivial ordered abelian group.
	A family $ \cF\neq \emptyset $ of subsets of $ G $
is said to be a \emph{field family (with respect to $ G $)} (see  \cite[Section~2]{rayner1968}) if the following six properties are satisfied:
\begin{enumerate}[(R1)]
	\item The elements of $ \cF $ are well ordered subsets of $ G $.
	\item The union of the elements of $ \cF $ generates $G$ as a group.
	\item $ A,B\in\cF\Rightarrow A\cup B\in\cF $.
	\item $ A\in\cF,\ B\subset A \Rightarrow B\in\cF $.
	\item $ A\in\cF,\ g\in G\Rightarrow A+g \in\cF $.
	\item if $ A\in\cF$ and $ A\subseteq G^{\geq 0} $ then the set of all finite sums of elements of $ A $ belongs to $ \cF $.
\end{enumerate}
\end{defn}
\begin{theorem}[{\cite[Theorem 1]{rayner1968}}]
	\label{rayner-theorem}
	If $ \cF $ is a field family then the set $ k\pow{\cF} $ of elements of $ \K $ whose support belongs to $ \cF $ is a subfield of $ \K $.
\end{theorem}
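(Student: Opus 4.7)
The plan is to verify the subfield axioms one by one. Since $k\pow{\cF} \subseteq \K$ and $\K$ is already a field by Hahn's theorem, it suffices to show $k\pow{\cF}$ contains $0$ and $1$ and is closed under addition, negation, multiplication, and inversion of nonzero elements. The general strategy is, for each operation $\star$, to bound $\supp(a\star b)$ by a set that can be proved to belong to $\cF$ using the axioms (R1)--(R6), after which (R4) lets us conclude.

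I begin with the easy cases. Since $\cF \neq \emptyset$, (R4) yields $\emptyset \in \cF$, so $0 \in k\pow{\cF}$. By (R2) some element of $\cF$ contains a nonzero $g \in G$; then (R4) gives $\{g\} \in \cF$ and (R5) (translating by $-g$) gives $\{0\} \in \cF$, so $1 \in k\pow{\cF}$. Closure under negation is immediate since $\supp(-a) = \supp(a)$, and for addition I use $\supp(a+b) \subseteq \supp(a) \cup \supp(b)$, which lies in $\cF$ by (R3), followed by (R4). For multiplication I use the inclusion $\supp(ab) \subseteq \supp(a) + \supp(b)$. Setting $C := \supp(a) \cup \supp(b) \in \cF$ and $c_0 := \min C$, axiom (R5) gives $C - c_0 \in \cF$, and this is contained in $G^{\geq 0}$. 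Now (R6) applies and the set $S$ of all finite sums of elements of $C - c_0$ lies in $\cF$. Since $\supp(a) + \supp(b) \subseteq (C - c_0) + (C - c_0) + 2c_0 \subseteq S + 2c_0$, axioms (R5) and (R4) give $\supp(ab) \in \cF$.

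The main obstacle is closure under multiplicative inverses, because this requires a geometric-series expansion rather than a single-step support estimate. Given $0 \neq a \in k\pow{\cF}$, let $g_0 := v(a) = \min \supp(a)$ and factor $a = a_{g_0} t^{g_0}(1 + c)$ with $c \in \K$ and $v(c) > 0$. Since $\supp(c) \subseteq \supp(a) - g_0$, axioms (R5) and (R4) give $\supp(c) \in \cF$, and $\supp(c) \subseteq G^{>0}$ ensures that the sum $(1+c)^{-1} = \sum_{n \geq 0} (-c)^n$ converges in $\K$ (its total support is well-ordered by standard arguments on well-ordered subsets of $G^{>0}$). Therefore $\supp((1+c)^{-1}) \subseteq T$, where $T$ is the set of all finite sums of elements of $\supp(c)$. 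By (R6) we have $T \in \cF$, and finally $\supp(a^{-1}) \subseteq T - g_0 \in \cF$ by (R5), so (R4) concludes $a^{-1} \in k\pow{\cF}$.
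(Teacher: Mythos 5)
The paper does not prove this statement itself; it is imported verbatim from Rayner's 1968 paper, so there is no in-paper proof to compare against. Your argument is correct and is essentially the standard (and, as far as the structure goes, Rayner's own) proof: reduce everything to support estimates, handle addition via (R3)--(R4), multiplication via the translation trick $C-c_0\subseteq G^{\geq 0}$ followed by (R5)--(R6), and inversion via the geometric series $\sum_{n\geq 0}(-c)^n$ with Neumann's lemma guaranteeing summability. The only points worth tightening are cosmetic: in the inversion step the support of $(1+c)^{-1}$ contains $0$ (the $n=0$ term), so if ``finite sums'' in (R6) is read as sums of at least one element you should pass to $T\cup\{0\}$, which is in $\cF$ by your step establishing $\{0\}\in\cF$ together with (R3); and in the multiplication step one should read ``finite sums'' as allowing repeated summands so that $x+x\in S$ for $x\in C-c_0$. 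Neither affects the validity of the argument.
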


\begin{defn}\label{def:rayner}
The fields $ k\pow{\cF} $ obtained in Theorem~\ref{rayner-theorem} are Hahn fields\footnote{See \cite[Theorem 3.15]{KKS-rayner-structures}.} that will be called \emph{Rayner fields}.
\end{defn}
\noindent

\begin{ess}
	\label{es:rayner-puiseux}\label{rayner-examples-ours}
	\begin{enumerate}[(i)]
		\newcounter{rayner-examples}
		\item A general class of Rayner fields is described in \cite[Section 3]{rayner1968}. In particular, the field of Puiseux series:
		let $ \K = k\pow{\Q} $ and consider the family $ \cF $ of sets of the form $ \frac{1}{d}A $ where $ d $ is a positive integer and $ A $ is a well ordered subset of $ \Z $. It is clear that $ \cF $ is a field family and that the field $ k\pow{\cF} $ thus obtained is the field $ \bbP $ of Puiseux series (see Section~\ref{subsect-puiseux}).
		\setcounter{rayner-examples}{\value{enumi}}
\end{enumerate}
\noindent
We describe further examples of interest to us.
\begin{enumerate}[(i)]
	\setcounter{enumi}{\value{rayner-examples}}
\item 	Let $ \kappa $ be an uncountable regular cardinal.
		The family $ \cF_\kappa $ of well ordered subsets of $ G $ with cardinality smaller than $\kappa $ is clearly a field family. The resulting field, denoted by $ \K_\kappa $, is called the $ \kappa $\emph{-bounded subfield} of $ \K $. It consists of all elements of $ \K $ whose support has cardinality less then $ \kappa $ (see \cite{alling:existence-eta-alpha} or \cite{kuhlmann-shelah}). 
		
\item Consider the set $S$ of finitely generated subgroups of $G$ and let $\cF$ consist of all well ordered subsets of elements of $S$.
Then $\cF$ is a field family and thus $k\pow{\cF}$ is a Hahn field containing $k(G)$.\qed
	\end{enumerate}
\end{ess}

\begin{lemma}\label{rayner-closed-G-Exp}
Let $ K $ be a Rayner field. Then $ K $ satisfies the canonical second lifting property.
\end{lemma}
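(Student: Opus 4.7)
The plan is to exploit the fact that the $G$-exponentiation $\rho_x$ preserves supports exactly, so it automatically restricts to any Rayner field. Concretely, writing $K = k\pow{\cF}$ for a field family $\cF$, I would first define $\rho_x$ on the ambient maximal field $\K$ by the formula \eqref{eq:rho-x-formula}; since $\K$ has the canonical second lifting property (Example~\ref{can-sec-LP:examples}(i)), $\rho_x$ is already a valuation preserving automorphism of $\K$, and by part~(v) of Remark~\ref{rmk:G-Exp-properties} it satisfies $\supp(\rho_x(a))=\supp(a)$ for every $a\in\K$. In particular, if $a\in K$ then $\supp(a)\in\cF$, hence $\supp(\rho_x(a))\in\cF$, so $\rho_x(a)\in K$. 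Applying the same argument to $x^{-1}\in \Hom(G,k^\times)$ (recall $(x^{-1})^g = x^{-g}$) shows that $\rho_x^{-1} = \rho_{x^{-1}}$ also restricts to $K$, so $\rho_x$ is a bijective ring endomorphism of $K$, i.e., $\rho_x\in\Aut K$.

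Next I would verify that $\rho_x\in\Int\Aut K$ by checking the two criteria of Proposition~\ref{rmk:internal-key-property}: the support-preservation gives $v(\rho_x(a))=\min\supp(\rho_x(a))=\min\supp(a)=v(a)$, and for $a\in R_K$ the constant term computation $\rho_x(a)_0 = a_0\,x^0 = a_0$ (since $x^0=1$ because $x$ is a group homomorphism) shows that $\rho_x$ fixes constant terms. Thus $\rho_x\in\Int\Aut K$ and the map $P\colon \Hom(G,k^\times)\to\Int\Aut K$ is well defined.

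Finally, I would check that $P$ is a group homomorphism by a direct computation: for $x,y\in\Hom(G,k^\times)$ and $a=\sum a_gt^g\in K$,
\[
\rho_x\rho_y(a) = \rho_x\Bigl(\sum a_g y^g t^g\Bigr) = \sum a_g y^g x^g t^g = \sum a_g (xy)^g t^g = \rho_{xy}(a),
\]
using commutativity of $k^\times$. This gives $P(xy)=P(x)P(y)$ and completes the verification of the canonical second lifting property.

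The substantive point—indeed the only non-routine one—is the support-preservation observation, which reduces everything to the already-known case of $\K$; once that is in hand, the arguments about being an automorphism, being internal, and respecting the group law are purely formal. No obstacle is expected beyond making sure the axioms (R1)--(R6) of a field family are not needed for anything more than closure under taking subsets of well-ordered sets in $\cF$, which is axiom (R4).
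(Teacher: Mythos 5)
Your proposal is correct and follows essentially the same route as the paper: the entire content is the observation that $\rho_x$ preserves supports, so that $\supp(\rho_x(a))=\supp(a)\in\cF$ forces $\rho_x(a)\in K$. The paper's proof consists of exactly this one-line argument; your additional verifications (defining $\rho_x$ on $\K$ first to avoid any circularity in invoking Remark~\ref{rmk:G-Exp-properties}, checking internality via Proposition~\ref{rmk:internal-key-property}, and the homomorphism property of $P$) are routine and only make explicit what the paper leaves implicit.
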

\begin{proof}
	Let $ a\in K $ and $ x\in\Hom(G,k^\times) $.
	By part (iv) of Remark~\ref{rmk:G-Exp-properties}
	we have $ \supp(a)=\supp(\rho_x(a)) $. Since $ K $ is a Rayner field this implies $ \rho_x(a)\in K $.
\end{proof}

\noindent The following proposition characterises Rayner fields with the canonical first lifting property.
\begin{prop}\label{prop:rayner-criterion}
	Let $ \cF $ be a field family and let $ F = k\pow{\cF} $ be the corresponding Rayner field. Then $ F $ has the canonical first lifting property if and only if $ \cF $ is stable under $ \oAut G $, by which we mean that if $ A\in\cF $ and $ \tau \in \oAut G $ then $ \tau(A)\in \cF $.
\end{prop}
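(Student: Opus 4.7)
The plan is to prove the two implications by essentially the same calculation, exploiting the fact that the canonical lift $\widetilde{\rho\tau}$ sends a series with support $A$ to a series with support $\tau(A)$, independently of the coefficient-side automorphism $\rho$. The key witness in one direction will be the characteristic series $a_A := \sum_{g\in A} t^g$, which lies in $F$ precisely because $A\in\cF$, and the key observation in the other direction is that the definition $F = k\pow{\cF}$ encodes membership purely in terms of support.

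For the forward implication, suppose $F$ has the canonical first lifting property, fix $A\in\cF$ and $\tau\in\oAut G$. Since $A$ is well ordered and $A\in\cF$, the series $a_A = \sum_{g\in A}t^g$ is an element of $F$ with $\supp(a_A)=A$. By Lemma~\ref{lemma:canonical-section} and Definition~\ref{def:CLP}, the canonical lift $\widetilde{\id_k\,\tau}$ restricts to an automorphism of $F$, hence $\widetilde{\id_k\,\tau}(a_A)\in F$. But by the defining formula \eqref{eq:canonical-lfit},
\[
\widetilde{\id_k\,\tau}(a_A) \;=\; \sum_{g\in A} t^{\tau(g)},
\]
and this series has support exactly $\tau(A)$. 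Since it belongs to $F=k\pow{\cF}$, we conclude $\tau(A)\in\cF$, so $\cF$ is stable under $\oAut G$.

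For the converse, suppose $\cF$ is stable under $\oAut G$ and fix $(\rho,\tau)\in\Aut k\times\oAut G$. For any $a = \sum_{g\in G} a_g t^g\in F$ with $A := \supp(a)\in\cF$, the series
\[
\widetilde{\rho\tau}(a) \;=\; \sum_{g\in A} \rho(a_g)\, t^{\tau(g)}
\]
has support equal to $\tau(A)$, because $\rho$ is injective and maps nonzero coefficients to nonzero coefficients. By hypothesis $\tau(A)\in\cF$, so $\widetilde{\rho\tau}(a)\in F$, i.e.\ $\widetilde{\rho\tau}(F)\subseteq F$. Applying the same argument to the pair $(\rho^{-1},\tau^{-1})$, whose canonical lift is the inverse of $\widetilde{\rho\tau}$ in $\vAut\K$, yields $\widetilde{\rho\tau}^{-1}(F)\subseteq F$ and therefore $\widetilde{\rho\tau}(F)=F$. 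This is condition \eqref{eq:CLP}, so $F$ has the canonical first lifting property.

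No genuine obstacle should arise; the only point requiring a moment of care is checking that $\sum_{g\in A}t^g$ is a legitimate element of $F$, which is immediate since the support axioms (R1) and the definition of $F=k\pow{\cF}$ are framed purely in terms of the support being a member of $\cF$, with no restriction on the coefficients.
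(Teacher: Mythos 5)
Your proof is correct and follows essentially the same route as the paper: both directions reduce to the observation that $\supp(\widetilde{\rho\tau}(a))=\tau(\supp(a))$, with a series of support $A$ serving as witness in the forward direction. Your two small refinements --- exhibiting the explicit witness $\sum_{g\in A}t^g$ rather than invoking ``any element with support $A$'', and deducing the equality $\widetilde{\rho\tau}(F)=F$ from the inclusion by applying the argument to $(\rho\inv,\tau\inv)$ --- are welcome but do not change the substance of the argument.
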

\begin{proof}
	Assume that $ \cF $ be stable under $ \oAut G $, let $ a = \sum a_gt^g \in F $ and $ (\rho,\tau)\in\Aut k \times \oAut G $. Then the support of $ \widetilde{\rho\tau}(a)=\sum \rho(a_g) t^{\tau(g)} $ is $ \{ \tau(g): g\in\supp(a) \} = \tau(\supp(a)) \in \cF $, by assumption (since $ \supp(a)\in \cF $).
	Hence $ \widetilde{\rho\tau}(a) \in F $. So $F$ has the canonical first lifting property.
	
	Vice versa, assume that $F$ has the canonical first lifting property and let $ A \in \cF $. Take any element $ a = \sum a_gt^g\in F $ such that $ \supp(a) =A $. By assumption, for all $ (\rho,\tau)\in\Aut k\times\oAut G $ we have $ b=\widetilde{\rho\tau}(a) \in F $ so, in particular, $ \supp(b) = \tau(A)\in \cF $ and so $ \cF $ is stable under $ \oAut G $.
\end{proof}
\begin{cor}
\begin{enumerate}[(i)]
	\label{canonical-puiseux-kappa}
	\item 
	The field $ \bbP $ of Puiseux series has the canonical first lifting property.
	\item 
	The $ \kappa $-bounded subfields of $ \K $ (Example~\ref{rayner-examples-ours}) have the canonical first lifting property.
\end{enumerate}
\end{cor}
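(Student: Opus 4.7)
The plan is to derive both statements directly from the criterion in Proposition~\ref{prop:rayner-criterion}: a Rayner field $k\pow{\cF}$ enjoys the canonical first lifting property exactly when the defining field family $\cF$ is stable under $\oAut G$. Since both $\bbP$ and the $\kappa$-bounded subfields $\K_\kappa$ are Rayner fields (by Example~\ref{rayner-examples-ours}), it suffices in each case to verify that the relevant field family is closed under the action of the order-preserving group automorphisms of the value group.

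For part~(ii), which is the easier of the two, I would observe that the field family $\cF_\kappa$ consists of well ordered subsets of $G$ of cardinality strictly less than $\kappa$. If $\tau\in\oAut G$ and $A\in\cF_\kappa$, then $\tau(A)$ is well ordered (as $\tau$ is order preserving) and has the same cardinality as $A$ (as $\tau$ is a bijection), hence $\tau(A)\in\cF_\kappa$. So $\cF_\kappa$ is stable under $\oAut G$, and Proposition~\ref{prop:rayner-criterion} applies.

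For part~(i), the field family $\cF$ defining $\bbP$ consists of sets of the form $\tfrac{1}{d}A$ with $d\in\N_{>0}$ and $A\subseteq\Z$ well ordered. Here the key input is that $\oAut\Q$ is just the group of multiplications by positive rationals. Given $\tau\in\oAut\Q$, write $\tau(x)=\tfrac{p}{r}x$ with $p,r$ positive integers; then for any $\tfrac{1}{d}A\in\cF$ we have $\tau(\tfrac{1}{d}A)=\tfrac{1}{rd}(pA)$, and $pA$ is a well ordered subset of $\Z$. Thus $\tau(\tfrac{1}{d}A)\in\cF$, so again Proposition~\ref{prop:rayner-criterion} yields the canonical first lifting property.

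The only point requiring a moment's thought is the identification of $\oAut\Q$ with multiplications by positive rationals in part~(i); this is standard (any order-preserving automorphism of $(\Q,+)$ is determined by its value at $1$, which must be a positive rational), but it is the one non-routine ingredient. Everything else is a direct bookkeeping verification reducing the claim to Proposition~\ref{prop:rayner-criterion}.
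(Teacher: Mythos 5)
Your proposal is correct and follows essentially the same route as the paper: both reduce the claim to Proposition~\ref{prop:rayner-criterion} and verify stability of the respective field families under $\oAut G$, using the identification $\oAut(\Q,+)\simeq(\Q^{>0},\cdot)$ for the Puiseux case and cardinality preservation for the $\kappa$-bounded case. No gaps.
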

\begin{proof}
	\begin{enumerate}[(i)]
		\item Let $ \frac{1}{d}A $ be as in Example~\ref{es:rayner-puiseux} above and let $ \tau $ be an order preserving automorphism of $ \Q $. Now $ \oAut(\Q,+)\simeq (\Q^{>0},\cdot) $ so $ \tau $ is multiplication by some positive rational $ \frac{m}{n} $. Then it is clear that $ \tau(\frac{1}{d}A)=\frac{1}{dn}(mA) $ also belongs to the same field family.
		\item An order preserving automorphism of $ G $ maps any well ordered subset onto another one of the same cardinality, hence $ \cF_\kappa $ is stable under $ \oAut G $.
	\end{enumerate}
\end{proof}
\begin{es}\label{es:k(G)-has-CLP}
	\label{rmk:Hahn-non-Rayner}
	Not all Hahn fields are Rayner. Let $ k=\Q,\ G=\Z $, so $ \K = \Q\pow{t} $ and consider $ K=\Q(t) $. Then $ a:=(1-t)\inv = \sum_{n\in\N}t^n\in K $ and $ \supp a = \N $. But for $ \mathrm{exp}(t):=\sum_{n\in\N}\frac{1}{n!}t^n$ we have $ \supp \mathrm{exp}(t) = \N $ and yet $ \mathrm{exp}(t)\notin K $ (see \cite[pp 765--767]{eisenstein-band-1}).
	So $ K $ contains some but not all elements of $ \K $ with support equal to $ \N $ and hence it is not a Rayner field.
	
	However $ k(G) $ has the canonical first lifting property for every ordered abelian group $ G $. Indeed $ k(G) $ is the set of elements of $ \K $ of the form $ a = \frac{p}{q} $ where $ p\in \K$ and $q \in \K^\times $ have finite support.
	Now let $ (\rho,\tau)\in\Aut k \times \oAut G $. Then the canonical lift $ \widetilde{\rho\tau} $ is an automorphism of $ \K $, so $ \widetilde{\rho\tau}(a) = \frac{\widetilde{\rho\tau}(p)}{\widetilde{\rho\tau}(q)} $. The supports $ \supp\widetilde{\rho\tau}(p) = \tau(\supp p) $ and $ \supp\widetilde{\rho\tau}(q) = \tau(\supp q) $ are finite, so $ \widetilde{\rho\tau}(a)\in k(G) $.\qed
\end{es}

\subsection{General decomposition theorem}\label{sec:decomp-general}

Combining Theorem~\ref{hofberger-semi-direct} with Theorem~\ref{prop:internal-semi-direct} and Corollary~\ref{prop:external-direct} we get the following decomposition theorem.

\begin{theorem}\label{hofberger-enhanced}Let $ K $ be a Hahn field with the first and canonical second lifting property. Then
	\begin{alignat}{3}
		\vAut K  &\simeq (\uAut K &&\rtimes \Hom(G,k^\times))\rtimes (\Aut k \times \oAut G)	 \label{eq:hofberger-enhanced-general}\\
		\vAut_k K  &\simeq (\uAut_k K &&\rtimes \Hom(G,k^\times))\rtimes \oAut G
	\end{alignat}\qed
\end{theorem}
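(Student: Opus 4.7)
The statement is essentially an assembly of the decomposition theorems proved in the preceding subsections, so the plan is to substitute one decomposition into another rather than carry out fresh calculations. No single step should present a genuine obstacle; the content was already packaged in Theorem~\ref{hofberger-semi-direct}, Theorem~\ref{prop:internal-semi-direct}, and Corollary~\ref{prop:external-direct}. The only thing to be careful about is that both hypotheses (first lifting property and canonical second lifting property) are used, one to invoke Hofberger's decomposition and the other to decompose the internal factor.

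For the first isomorphism, I would start from Theorem~\ref{hofberger-semi-direct}, which uses the first lifting property to give the outer decomposition
\[
\vAut K = \Int\Aut K \rtimes \Ext\Aut K.
\]
Then I would refine the normal factor by invoking Theorem~\ref{prop:internal-semi-direct}, whose hypothesis (the canonical second lifting property) is available, to write
\[
\Int\Aut K = \uAut K \rtimes \gexp{G}K.
\]
Finally I would identify the two ``abstract'' factors with their concrete descriptions: $\gexp{G}K \simeq \Hom(G,k^\times)$ by Definition~\ref{def:g-exp} (equivalently via the section $P$ of Proposition~\ref{prop:embed-G-Exp}), and $\Ext\Aut K \simeq \Aut k \times \oAut G$ by the isomorphism \eqref{eq:external-direct-k-stable} of Corollary~\ref{prop:external-direct}. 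Substituting these identifications into the two nested semi-direct products yields the claimed decomposition of $\vAut K$.

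For the second isomorphism, I would follow exactly the same scheme, but starting from equation \eqref{eq:hofberger-k} of Theorem~\ref{hofberger-semi-direct} to obtain
\[
\vAut_k K = \Int\Aut_k K \rtimes \Ext\Aut_k K,
\]
then applying \eqref{eq:Int-decomp-k-stable} of Theorem~\ref{prop:internal-semi-direct} (which gives $\Int\Aut_k K = \uAut_k K \rtimes \gexp{G}K$, noting in particular the equality $\Int\Aut_{(k)}K=\Int\Aut_k K$ and $\uAut_{(k)}K = \uAut_k K$ established in Lemma~\ref{lemma:1-Aut-k=1-Aut(k)}), and finally replacing $\gexp{G}K$ by $\Hom(G,k^\times)$ and $\Ext\Aut_k K$ by $\oAut G$ via \eqref{eq:external-direct-k} of Corollary~\ref{prop:external-direct}. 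Since every step is either a direct citation or a substitution, there is no hard part; the content of the theorem is genuinely the accumulation of the structural work done earlier, and its proof reduces to writing the chain of isomorphisms.
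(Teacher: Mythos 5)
Your proposal is correct and coincides with the paper's own (implicit) proof: the theorem is stated with a \qed precisely because it is obtained by combining Theorem~\ref{hofberger-semi-direct}, Theorem~\ref{prop:internal-semi-direct} and Corollary~\ref{prop:external-direct}, together with the identification $\gexp{G}K\simeq\Hom(G,k^\times)$, exactly as you describe. You also correctly track which hypothesis is used where (the first lifting property for the outer semi-direct product, the canonical second lifting property for decomposing $\Int\Aut K$).
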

\noindent
The canonical first lifting property allows us to refine some of the results that we proved above under the weaker assumption of a general first lifting property.

\begin{prop}\label{hofberger-enhanced-k-stable}
	Let $ K $ be a Hahn field with the canonical first and second lifting property. Then
	\begin{enumerate}[(i)]
		\item
		$ \Ext\Aut K= \Ext\Aut_{(k)} K $
		\item\label{eq:hofberger-stable}
		$ \vAut_{(k)} K = \Int\Aut_k K \rtimes \Ext\Aut K $
		\item 
		$ \vAut_{(k)} K  \simeq (\uAut_k K \rtimes \Hom(G,k^\times))\rtimes (\Aut k \times \oAut G)$.
		\item \begin{align*}
			\vAut_{(k)}K &\simeq
			\Int\Aut_kK\rtimes\brackets{\oAut G\rtimes\Aut k}\\
			&\simeq \brackets{\Int\Aut_kK\rtimes\oAut G}\rtimes\Aut k		
		\end{align*}
	\end{enumerate}
\end{prop}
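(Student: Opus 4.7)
The plan is to deduce each part sequentially from the decompositions already established, specialising to the canonical section $\Psi_c$.

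For (i), I would start from formula \eqref{eq:canonical-lfit}: every canonical lift $\widetilde{\rho\tau} = \Psi_c(\rho,\tau)$ sends $\alpha \in k \subseteq K$ to $\rho(\alpha) \in k$, so $\widetilde{\rho\tau}(k) = k$. Hence $\Ext\Aut K = \Psi_c(\Aut k \times \oAut G) \subseteq \vAut_{(k)}K$; combined with the trivial inclusion $\Ext\Aut_{(k)}K \subseteq \Ext\Aut K$ this yields the equality. Statement (ii) is then immediate from equation \eqref{eq:hofberger-k-stable} of Theorem~\ref{hofberger-semi-direct} by substituting (i). For (iii), I would insert into (ii) the decomposition $\Int\Aut_k K = \uAut_k K \rtimes \gexp{G}K$ from Theorem~\ref{prop:internal-semi-direct}, the identification $\gexp{G}K \simeq \Hom(G,k^\times)$ (Definition~\ref{def:g-exp}), and the isomorphism $\Ext\Aut K \simeq \Aut k \times \oAut G$ from Corollary~\ref{prop:external-direct}.

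For the two isomorphisms in (iv): the first is obtained from (ii) by viewing the direct product $\Aut k \times \oAut G \simeq \Ext\Aut K$ as the trivial semi-direct product $\oAut G \rtimes \Aut k$. For the second, I would invoke Lemma~\ref{lemma:k-stable-semidirect} to produce $\vAut_{(k)}K \simeq \vAut_k K \rtimes \Aut k$, and then use equation \eqref{eq:hofberger-k} of Theorem~\ref{hofberger-semi-direct} together with Remark~\ref{rmk:Psi1-Psi2}(i) to substitute $\vAut_k K \simeq \Int\Aut_k K \rtimes \oAut G$.

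The only step really needing verification is the compatibility of the two expressions in (iv), namely that the canonical lifts of $\Aut k$ and $\oAut G$ centralise each other inside $\Ext\Aut K$, so that $\oAut G \rtimes \Aut k$ is genuinely a direct product. This follows by a short direct computation from \eqref{eq:canonical-lfit}: for any $a = \sum_{g\in G} a_g t^g$, both $\widetilde{\rho\,\id_G}\,\widetilde{\id_k\,\tau}(a)$ and $\widetilde{\id_k\,\tau}\,\widetilde{\rho\,\id_G}(a)$ equal $\sum_{g\in G}\rho(a_g)\, t^{\tau(g)}$. I do not anticipate any other essential obstacle; the content of the proposition is really a bookkeeping refinement of Theorems~\ref{hofberger-semi-direct}, \ref{prop:internal-semi-direct} and Lemma~\ref{lemma:k-stable-semidirect} made possible by the stronger canonical first lifting hypothesis.
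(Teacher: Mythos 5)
Your proposal is correct and follows essentially the same route as the paper: part (i) is the content of Remark~\ref{rmk:canonical-lift-k-stable}, (ii) and (iii) are obtained by substituting (i) into Theorem~\ref{hofberger-semi-direct} and Theorem~\ref{hofberger-enhanced}, and (iv) combines Lemma~\ref{lemma:k-stable-semidirect} with equation \eqref{eq:hofberger-k} and Corollary~\ref{prop:external-direct}. Your explicit check that the canonical lifts of $\Aut k$ and $\oAut G$ commute is a small welcome addition that the paper only asserts in passing.
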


\begin{proof}
	Part (i) follows immediately from Remark~\ref{rmk:canonical-lift-k-stable}.
	Parts (ii) and (iii) are analogous to Theorems~\ref{hofberger-semi-direct} and \ref{hofberger-enhanced} respectively, for the case where $ K $ has the canonical first lifting property.
	Finally, for part (iii):
	The first line follows from part (ii) and \ref{prop:external-direct}, noticing that we have $ \oAut G\rtimes\Aut k=\oAut G\times\Aut k $.
	The second line follows by combining
	Lemma~\ref{lemma:k-stable-semidirect}, Theorem~\ref{hofberger-semi-direct} and Proposition~\ref{prop:external-direct}.
\end{proof}

\begin{cor}\label{cor:1-Aut-determines-vAut}
	Let $ k(G)\subseteq K,F\subseteq \K $ be two Hahn fields with the first and canonical second lifting property. Then
	\begin{alignat}{5}
		\vAut K &\simeq \vAut F &&\iff  \uAut K &&\simeq \uAut F\\
		\vAut_{(k)} K &\simeq \vAut_{(k)} F &&\iff  \uAut_k K &&\simeq \uAut_k F\\
		\vAut_k K &\simeq \vAut_k F &&\iff  \uAut_k K &&\simeq \uAut_k F
	\end{alignat}\qed
\end{cor}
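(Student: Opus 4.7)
The plan is to deduce all three equivalences from the decomposition results, Theorem~\ref{hofberger-enhanced} and Proposition~\ref{hofberger-enhanced-k-stable}, applied to both $K$ and $F$. Each of $\vAut K$, $\vAut_{(k)}K$, $\vAut_kK$ (and the analogous $F$-groups) is thereby presented as an iterated semi-direct product of four factors. Since $K$ and $F$ share the coefficient field $k$ and the value group $G$, three of these factors, namely $\Hom(G,k^\times)$, $\Aut k$ and $\oAut G$, are identical on the two sides; only the $\uAut$- (respectively $\uAut_k$-) factor can differ. Each equivalence thus reduces to the assertion that the isomorphism class of the iterated semi-direct product is controlled precisely by the isomorphism class of this remaining factor.

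For the backward implications, given an isomorphism $\phi$ between the $\uAut$-factors, I would extend it to an isomorphism of the whole 4-factor semi-direct product by letting it act as the identity on the three common factors. The critical verification is that $\phi$ intertwines the conjugation actions of $\gexp{G}K\simeq \Hom(G,k^\times)$ and of $\Ext\Aut K\simeq \Aut k \times \oAut G$ on the $\uAut$-factor with the analogous actions on the $F$-side. This should follow from the fact that these actions are given by uniform formulae in terms of the action on the monomials $t^g$ and on the coefficients in $k$, independently of the ambient Hahn field, so they transport through $\phi$ automatically.

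For the forward implications, I would exhibit $\uAut K$ as an intrinsic subquotient of $\vAut K$: by \eqref{eq:Phi} and Theorem~\ref{prop:internal-semi-direct} we have the chain $\uAut K = \ker X \unlhd \Int\Aut K = \ker\Phi \unlhd \vAut K$ with quotients $\Hom(G,k^\times)$ and $\Aut k \times \oAut G$ respectively. If these subgroups are characteristic in $\vAut K$, then any abstract isomorphism $\vAut K\simeq \vAut F$ restricts and descends to an isomorphism $\uAut K\simeq \uAut F$. The hard part will be precisely this forward step: verifying that $\Int\Aut K$ and $\uAut K$ are characteristic, not merely normal. One expects this to hold because their quotients depend only on $k$ and $G$, but the rigorous justification requires a uniqueness argument for the distinguished filtration extracted from the structure theorems rather than read off from them directly.
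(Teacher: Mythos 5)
The paper itself offers no argument here: the corollary carries a bare \qed and is meant to be read off from Theorem~\ref{hofberger-enhanced} and Proposition~\ref{hofberger-enhanced-k-stable}, exactly the decompositions you invoke, so your overall strategy matches the authors' intent. But taken literally as a biconditional between abstract isomorphism classes, neither implication is closed by what you write. For the backward direction, the claim that the conjugation actions ``transport through $\phi$ automatically'' because they are given by uniform formulae is not correct: the action of $\gexp{G}K\simeq\Hom(G,k^\times)$ on $\uAut K$ is not intrinsic to the abstract group $\uAut K$. Concretely, conjugating $\tau\in\uAut K$ by $\rho_x$ replaces $u_\tau(g)=t^{-g}\tau(t^g)$ by $\rho_x\inv(u_\tau(g))$, which depends on the realisation of $\tau$ as a map on series; an arbitrary group isomorphism $\phi\colon\uAut K\to\uAut F$ carries no information about that realisation, so there is no reason for $\phi$ to intertwine the two actions. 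To close this direction one must either produce an equivariant $\phi$ (e.g., one induced by an inclusion of fields, or transported through the maps $\cS$ of Section~\ref{subsec:general}) or show the semidirect products are isomorphic irrespective of the action; you do neither.

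For the forward direction you correctly name the obstacle --- one needs $\uAut K$ (and $\Int\Aut K$) to be recoverable from the abstract group $\vAut K$, i.e., characteristic or at least an isomorphism invariant --- but you leave it open, and normality with quotient depending only on $k$ and $G$ does not suffice: non-isomorphic kernels can sit in isomorphic extensions with the same quotient. So both implications remain genuinely incomplete. The honest conclusion is that the corollary, read as a statement about abstract group isomorphism, requires either additional hypotheses (for instance that the isomorphisms respect the filtration $\uAut K\unlhd\Int\Aut K\unlhd\vAut K$, in which case everything you wrote goes through) or a further argument that the paper also does not supply; your proposal has the merit of exposing exactly where the gaps are, but it does not fill them.
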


\noindent
The following diagram summarises the information on the group structure of $ \vAut K $, for a Hahn field $ K $ with the first and canonical second lifting property. 
The double line means that the smaller group is normal in the larger and, in general, all the inclusions are strict.
\begin{equation}\label{eq:diagram}
	\xymatrix{
		&&\vAut K \ar@{-}[dr] \ar@{=}[dl]	& & \\
		&\Int\Aut  K \ar@{-}[d] \ar@{=}[dl] &	& \Ext\Aut  K\ar@{=}[dr] \ar@{=}[d] &\\
		\uAut K \ar@{-}[ddrr] & \gexp{G}K \ar@{-}[ddr] & & \Psi(\Aut k) \ar@{-}[ddl] &\Psi(\oAut G) \ar@{-}[ddll]	 \\
		& &  &  & \\
		&& \{ \id_K \} && 
	}
\end{equation}

\noindent
Analogous diagrams hold for $ \vAut_{(k)} K $ and $ \vAut_k K $.
Comparing the groups appearing in different diagrams, many open questions remain, which we intend to address in future publications.

\section{Strongly additive automorphisms}\label{sec:strongly-linear}

Inspired by the work of Schilling \cite{schilling44}, in this section we study automorphisms of a Hahn field $ K $ which have the very powerful property of commuting with infinite sums.

\begin{defn}\label{def-strongly-linear}
	Let $ A = \{a_{(i)}:i\in I\}\subseteq \K $ be a family of elements of $ \K $ indexed by a set $ I $.
	Let $ \Supp A := \bigcup_{i\in I}\supp a_{(i)} $ and for $ g\in G $ define $ S_g = \{ i\in I : g\in\supp a_{(i)} \} $.
		We say that $ A $ is \emph{summable} if
		\begin{enumerate}[(a)]
			\item \label{def:summable-well-ordered} $ \Supp A $ is well ordered;
			\item \label{def:summable-finite} for all $ g \in \Supp A $ the set $ S_g $ is finite.
		\end{enumerate}
\end{defn}
\begin{lemma}\label{lemma:summable-family}
	Let $ A = \{a_{(i)}:i\in I\}\subseteq \K $ and for all $ g\in \Supp A $ let
	$
	\mathbf{a}_g := \sum_{i\in S_g}(a_{(i)})_g.
	$
	Then \begin{enumerate}[(i)]
		\item $ A $ is summable if and only if
	\[
	\mathbf{a}=\sum_{i\in I}a_{(i)} := \sum_{g\in\Supp A}\mathbf{a}_gt^g
	\]
	is a well defined element of $ \K $.

	\item 
Assume $ A $ is summable and set $ \nu = \min\{v(a_{(i)}):i\in I\} $. Then 
\begin{enumerate}[(i)]
	\item $ v(\mathbf{a})\geq \nu $. 
	\item If $ |S_{\nu}|=1 $, i.e., $ \exists! j\in I: v(a_{(j)})=\nu $, then $ v(\mathbf{a})=\nu $ and $ \mathbf{a}_\nu = (a_{(j)})_{\nu} $.
\end{enumerate}
	\end{enumerate}
	\qed
\end{lemma}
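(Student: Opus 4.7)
The plan is to unpack the definitions directly: part (i) amounts to a justification of the notation $\sum_{i\in I}a_{(i)}$, after which part (ii) will follow from tracking the minimum of the support. For the forward direction of (i), if $A$ is summable then condition (b) ensures each $S_g$ is finite, so $\mathbf{a}_g$ is a well-defined finite sum in $k$, and the support of the resulting series lies inside $\Supp A$, which by (a) is well-ordered; hence $\mathbf{a}\in\K$. Conversely, for $\sum_{g\in\Supp A}\mathbf{a}_g t^g$ to be interpreted as a bona-fide Hahn series, each $\mathbf{a}_g$ must itself be well defined in $k$ (forcing $S_g$ finite, which is (b)), and the formal sum indexed by $\Supp A$ is only meaningful as an element of $\K$ when $\Supp A$ is well-ordered, giving (a).

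For part (ii), summability of $A$ ensures $\Supp A$ is well-ordered, so $\nu$ exists. I will first observe that $\nu=\min\Supp A$: for every $i$ and every $g\in\supp a_{(i)}$ one has $g\geq v(a_{(i)})\geq\nu$, so $\min\Supp A\geq\nu$; and conversely, for any $j$ attaining the minimum, $\nu=v(a_{(j)})=\min\supp a_{(j)}\in\Supp A$. Since $\supp\mathbf{a}\subseteq\Supp A$, it then follows that $v(\mathbf{a})\geq\min\Supp A=\nu$, which is (ii)(i).

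For (ii)(ii), I will note that $i\in S_\nu$ means $\nu\in\supp a_{(i)}$, and combined with $v(a_{(i)})\geq\nu$ this forces $v(a_{(i)})=\nu$; so the hypothesis $|S_\nu|=1$ coincides with the existence of a unique $j\in I$ such that $v(a_{(j)})=\nu$. Under this hypothesis the defining sum for $\mathbf{a}_\nu$ collapses to $(a_{(j)})_\nu$, which is nonzero since it is the leading coefficient of $a_{(j)}$; hence $\nu\in\supp\mathbf{a}$, and combined with (ii)(i) this yields both $v(\mathbf{a})=\nu$ and $\mathbf{a}_\nu=(a_{(j)})_\nu$. The only delicate point in the whole argument is the reverse direction of (i), which hinges on interpreting ``well defined element of $\K$'' as requiring both coefficient-wise finiteness and well-ordered indexing; no substantive obstacle arises elsewhere.
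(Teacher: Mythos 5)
Your argument is correct, and the paper in fact omits the proof entirely (the lemma is stated with a \qed), treating it as an immediate unpacking of Definition~\ref{def-strongly-linear}; your write-up is exactly that unpacking. Your explicit handling of the converse in (i) — reading ``well defined element of $\K$'' as requiring both finiteness of each $S_g$ and well-orderedness of the indexing set $\Supp A$ (rather than merely of $\supp\mathbf{a}$, which could be smaller after cancellation) — is the right interpretation and the only point where any care is needed.
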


\begin{defn}\label{def:sum-of-summable}
Let $ A = \{a_{(i)}:i\in I\}\subseteq \K $ be a family of elements of $ \K $ indexed by a set $ I $.
	\begin{enumerate}[(i)]
		\item If $ A $ is a summable family we call $ \mathbf{a}=\sum_{i\in I}a_{(i)} $ \emph{the sum of} $ A $.
		\item 
		Let $ K $ be a Hahn field and $ A = \{a_{(i)}:i\in I\}\subseteq K $ a summable family. We say that $ A $ is $ K $-\emph{summable}  if $ \mathbf{a} \in K $.
		\item 
		Let $ C=\{c_i:\ i\in I\} \subseteq k $ be a family of coefficients. We define the family $ CA = \{c_ia_{(i)}:i\in I\}\subseteq \K $. We call $ CA $ the \emph{scalar multiple}  of $ A $ by $ C $.
		\item 
		Let $B=\{b_{(i)}:i\in J\}\subseteq \K $ be another family.
		Assume, without loss of generality, that $ I=J $. We define the \emph{sum} $ A+B = \{ a_{(i)}+b_{(i)}:i\in I \} $
		and the \emph{product} $ AB=\{a_{(i)}b_{(j)}:i,j\in I\} $.		
	\end{enumerate}
\end{defn}

\begin{rmk}
	\begin{enumerate}[(i)]
		\item 
		It follows from Lemma~\ref{lemma:summable-family} that a scalar multiple of a summable family, the sum of two summable families and the product of two summable families are all summable.
		\item 
		The maximal Hahn field $ \K $ is the only Hahn field that is closed under taking sums of arbitrary summable families. Indeed, if $ K $ is a Hahn field such that every summable family $ A \subseteq K$ is $ K $-summable, then for all $ a=\sum a_gt^g\in \K $ the family $ \{a_gt^g:g\in\supp a\} $ is $ K $-summable.
		Thus $ a\in K $ and so $ K = \K $.
\qed	\end{enumerate} 
\end{rmk}
\begin{defn}
	\label{def:strongly-linear}
	Let $ K $ be a Hahn field
	\begin{enumerate}[(i)]
		\item A map $ \sigma\colon K\ra K $ is $ K $-\emph{summable}
	if, for all $ a = \sum a_g t^g \in K $,
\begin{enumerate} 
		\item \label{def:strongly-linear-summable} the family $ \{\sigma(a_gt^g):g\in\supp(a)\} $ is $ K $-summable;
		\item \label{def:strongly-linear-additive} $ \sigma(a)= \sum \sigma(a_g t^g) $.

\end{enumerate}
\item An automorphism  $ \sigma\in \vAut K $ is \emph{strongly additive} if both $ \sigma $ and $ \sigma\inv $ are $ K $-summable maps.
	\end{enumerate}
\end{defn}

\begin{notation}\label{not:str}\label{not:str-int}
	The set of strongly additive, valuation preserving automorphisms will be denoted by $ \vAut^{\Str}K $. We will also use the superscript `` $ \Str $ '' on the other groups of automorphisms, to denote the corresponding subset of strongly additive automorphisms: for example $ \Int\Aut^{\Str} K = \Int\Aut K\cap\vAut^{\Str}K $.
\end{notation}

\begin{rmk}
	\label{rmk:str-lin-first-properties}
	\begin{enumerate}[(i)]		
	\item A strongly additive automorphism needs not be valuation preserving: the non-valuation preserving automorphism constructed in Example~\ref{es:counter-non-arch} is strongly additive.
	\item A strongly additive automorphism needs not be a $ k $-automorphism: let $ \alpha\in\Aut k $ be a non-trivial automorphism of $ k $ (for example, we can choose $ k =\Q(\sqrt{2}) $ and $ \alpha\colon \sqrt{2}\mapsto -\sqrt{2} $) and let $ \sigma\in \Aut \K $ be defined by $ \sigma(\sum a_gt^g) = \sum \alpha(a_g)t^g $. This is a strongly additive automorphism that does not fix $ k $.\qed
\end{enumerate}
\end{rmk}
\noindent
The following is an example of an automorphism that is not strongly additive.
We wish to thank L.~S.~Krapp for suggesting the idea.
\begin{es}
\label{ex-non-strong}
Let $ \omega $ be the first infinite ordinal, let $ G=\coprod_{\omega+1}\Q $ (see  Notation~\ref{not:hahn-sum}) and let $ \K = \C\pow{G} $.
Elements of $ G $ have the form $ \sum_{n\in\N}q_n\1_n + q_\omega\1_\omega $ and the set $ \{\1_n:n\in\N\}\cup\{\1_\omega\} $ is a $ \Q $\textit{-valuation basis} for $ G $ (see~\cite[Page~4]{salma-monograph}).

The set $ \{t^{-\1_n}:n\in\N\}$ is algebraically independent over $ \C $ (see \cite[Theorem~3.4.2]{engler-prestel}), therefore, it extends to a transcendence basis $ \cB $ of $ \K $ over $ \C $. The set $ \{ t^{-\1_n}+t^{\1_\omega}:n\in\N \} $ is also algebraically independent so it also extends to a transcendence basis $ \cB' $. There exists a bijection $ f\colon\cB\ra\cB' $ such that $ f(t^{-\1_n}) = t^{-\1_n}+t^{\1_\omega} $ for all $ n\in\N $. Since $ \C $ is algebraically closed, a bijection of transcendence bases extends, in turn, to an isomorphism of fields: that is, there exists a field automorphism $ f\in\Aut \K $ such that $ f(t^{-\1_n}) = t^{-\1_n}+t^{\1_\omega} $ for all $ n\in\N $. 

Now consider the element $ a = \sum_{n\in\N}t^{-\1_n} $. Notice that the sequence $ (\1_n)_{n\in\N} $ is anti-well ordered in $ G $, so the support $ (-\1_n)_{n\in\N} $ of $ a $ is well ordered, so $ a\in \K $. 
However, the family $ A = \{ f(t^{-\1_n}):n\in\N \} $ is not summable.
Indeed $ \Supp A = \{ -\1_n : n\in\N \}\cup\{\1_\omega\} $ and $ \1_\omega \in \supp f(t^{-\1_n}) $ for all $ n\in \N $ hence $ S_{\1_\omega} = \N $ violates condition \eqref{def:summable-finite} of Definition~\ref{def-strongly-linear}.\qed
\end{es}

\begin{prop}
	\label{external-are-strongly-lin}
	\label{rmk:preparation-to-str-lin}
	\label{rmk:g-exp-str.lin-inv}
	Let $ K $ be a Hahn field.
	Then
\begin{enumerate}[(i)]
	\item
	$ \vAut^{\Str} K $ is a subgroup of $ \vAut K $.
\item Assume that $ K $ has the first lifting property. Then
$\Ext\Aut K \leq \vAut^{\Str} K $.
\item Assume, moreover, that $ K $ satisfies the canonical second lifting property. Then we have
$ \gexp{G} K\leq \Int\Aut_k^{\Str} K $. 
\end{enumerate}
\end{prop}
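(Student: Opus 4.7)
The plan is to treat the three parts separately, starting with the group-theoretic claim in (i) and then verifying strong additivity explicitly for external automorphisms in (ii) and for $G$-exponentiations in (iii).

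For (i), closure of $\vAut^{\Str} K$ under inversion is built into Definition~\ref{def:strongly-linear}, which already requires both $\sigma$ and $\sigma^{-1}$ to be $K$-summable. So the only real work is closure under composition. Given $\sigma,\tau \in \vAut^{\Str} K$ and $a = \sum_g a_g t^g \in K$, I would apply $K$-summability of $\tau$ to write $\tau(a) = \sum_g \tau(a_g t^g)$, and then push $\sigma$ inside the sum. This push relies on an auxiliary fact that I would prove first: every strongly additive automorphism $\sigma$ commutes with the sum of an \emph{arbitrary} $K$-summable family, not just with the canonical monomial decomposition of a single element. The proof decomposes each summand into its monomial terms via Lemma~\ref{lemma:summable-family}, applies strong additivity on the resulting double-indexed monomial family, and swaps the order of summation using condition~\eqref{def:summable-finite} of Definition~\ref{def-strongly-linear}.

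For (ii), I would invoke the canonical lift of Definition~\ref{def:canonical-lift}, writing $\widetilde{\rho\tau}\bigl(\sum_g a_g t^g\bigr) = \sum_g \rho(a_g)\,t^{\tau(g)}$. For any $a \in K$ the family $\{\rho(a_g)\, t^{\tau(g)} : g \in \supp a\}$ has support $\tau(\supp a)$, well-ordered because $\tau \in \oAut G$, and each $S_h$ is a singleton, so the family is trivially $K$-summable; the same check applied to $(\widetilde{\rho\tau})^{-1} = \widetilde{\rho^{-1}\tau^{-1}}$ completes the verification that the canonical lift is strongly additive. When the chosen section $\Psi$ coincides with the canonical one this directly yields $\Ext\Aut K \subseteq \vAut^{\Str} K$; for a general section I would absorb the discrepancy into an internal factor and separate its $\gexp{G}K$ component (handled by part~(iii)) from its $\uAut K$ component (to be treated in subsequent results).

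For (iii), the canonical second lifting property gives the explicit formula $\rho_x\bigl(\sum a_g t^g\bigr) = \sum a_g x^g t^g$ from Equation~\eqref{eq:rho-x-formula}. For any $a \in K$ the family $\{a_g x^g t^g : g \in \supp a\}$ has support exactly $\supp a$ and each $S_h$ is a singleton, so $K$-summability is immediate; Equation~\eqref{eq:rho-x-inv} shows $\rho_x^{-1} = \rho_{x^{-1}}$ has the same form and is likewise $K$-summable. Since $\rho_x$ fixes $k$ pointwise by Remark~\ref{rmk:G-Exp-properties}(iv) and lies in $\Int\Aut K$ by Proposition~\ref{prop:embed-G-Exp}, we obtain $\rho_x \in \Int\Aut_k^{\Str} K$, as required.

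The main obstacle is the generalised strong additivity lemma supporting part~(i): it is technically innocuous but demands careful bookkeeping of a double-indexed summable family and a legitimate exchange of summation order. Once that is in hand, parts~(ii) and~(iii) reduce to direct verifications on the explicit formulas for the canonical lift and for a $G$-exponentiation.
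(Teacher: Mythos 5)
Your proposal takes the same route as the paper on all three parts, but with two points of extra care that are worth comparing. For part~(i) the paper's entire argument is the one-line computation $(\sigma\tau)(a)=\sigma\brackets{\sum_g\tau(a_gt^g)}=\sum_g\sigma(\tau(a_gt^g))$; the second equality is precisely the point you isolate, since $\{\tau(a_gt^g)\}_g$ is a general $K$-summable family rather than a monomial decomposition, and Definition~\ref{def:strongly-linear} only guarantees commutation with the latter. Your auxiliary lemma is therefore a genuine addition, and your plan (refine to the double-indexed monomial family and exchange sums via condition~(b) of Definition~\ref{def-strongly-linear}) is the standard one; be warned, though, that the bookkeeping is not purely formal: after grouping the monomials by exponent $h$ you must still show that $\bigcup_{g,h}\supp\sigma\brackets{(\tau(a_gt^g))_h t^h}$ is well ordered with finite fibres, and for $\sigma\notin\Aut_kK$ the coefficients are sent to units $\sigma(\alpha)$ with varying supports, so this needs an explicit argument and not only a swap of summation order. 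For part~(ii) the paper does exactly what you do, namely writes $\sigma(a)=\sum\rho(a_g)t^{\tau(g)}$ and checks summability of a family with singleton fibres; it thereby tacitly proves the claim for lifts of canonical form, and your honesty about the general section is to your credit --- but your proposed repair (absorbing the discrepancy into an internal factor) does not close that case, because the internal factor decomposes as a $G$-exponentiation times a $1$-automorphism and the latter need not be strongly additive. Part~(iii) matches the paper's verification via \eqref{eq:rho-x-formula} and \eqref{eq:rho-x-inv} exactly.
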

\begin{proof}
	\begin{enumerate}[(i)]
		\item 
		Let $ \sigma $ and $ \tau $ be two strongly additive automorphisms. Then, for all $ a = \sum a_gt^g\in  K $, applying subsequently the strong additivity of $ \sigma $ and $ \tau $ we get
		\[
		(\sigma \tau)(a)=\sigma\left( \sum \tau(a_gt^g) \right) = \sum  \sigma(\tau(a_gt^g))
		\]
		so $ \sigma \tau $ is strongly additive.    
		\item Let $ \sigma \in \Ext\Aut  K $. Then there are $ \rho\in\Aut k $ and $ \tau\in\oAut G $ such that, for all $ a = \sum a_gt^g \in  K $ we have $ \sigma(a) = \sum \rho(a_g)t^{\tau(g)} $. In particular, for every term $ a_gt^g $ we also have $ \sigma(a_gt^g)= \rho(a_g)t^{\tau(g)} $.
		Thus
		\[\sigma\left(\sum_{g\in G}a_gt^g\right) = \sum_{g\in G}\rho(a_g)t^{\tau(g)} = \sum_{g\in G}\sigma(a_g t^g).
		\]
		So $ \sigma $ is strongly additive.
		\item 
		By \eqref{eq:rho-x-formula} and \eqref{eq:rho-x-inv}, a $ G $-exponentiation $ \rho_x $ on $  K $ is a strongly additive automorphism.
	\end{enumerate}
\end{proof}

\subsection[Structure of the group of strongly additive automorphisms]{The structure of $ \boldsymbol{\vAut^{\Str} K}$}

\label{subsec:structure}
Since normality is preserved by taking intersections, it follows that $ \Int\Aut ^{\Str} K = \Int\Aut  K\cap\vAut ^{\Str} K \unlhd \vAut^{\Str} K $.
Decomposition results analogous to those obtained in Section~\ref{sec:lifting-property} hold for the group of strongly additive automorphisms and its subgroups:
\begin{prop}\label{prop:hofberger-str-linear}
Let $ K $ be a Hahn field with the first lifting property. Then
\begin{equation}
\vAut ^{\Str}  K = \Int\Aut ^{\Str}  K \rtimes \Ext\Aut   K.
\end{equation}	
\end{prop}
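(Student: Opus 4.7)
The plan is to intersect the semi-direct decomposition from Theorem~\ref{hofberger-semi-direct} with $\vAut^{\Str}K$. The key ingredient that makes this work is Proposition~\ref{external-are-strongly-lin}, which tells us that $\Ext\Aut K\subseteq \vAut^{\Str}K$ and that $\vAut^{\Str}K$ is itself a subgroup of $\vAut K$. Hence the external factor will survive the restriction unchanged, and only the internal factor requires attention.

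First I would verify the two ingredients of a semi-direct product decomposition. Normality of $\Int\Aut^{\Str}K$ in $\vAut^{\Str}K$ is immediate from the identity $\Int\Aut^{\Str}K = \Int\Aut K \cap \vAut^{\Str}K$, since $\Int\Aut K \unlhd \vAut K$ and the intersection of a normal subgroup with any subgroup is normal in that subgroup. The trivial intersection $\Int\Aut^{\Str}K \cap \Ext\Aut K = \{\id_K\}$ follows at once from the analogous statement in Theorem~\ref{hofberger-semi-direct}, since $\Int\Aut^{\Str}K \subseteq \Int\Aut K$.

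The substantive step is to show $\vAut^{\Str}K = \Int\Aut^{\Str}K \cdot \Ext\Aut K$. Given $\sigma \in \vAut^{\Str}K \subseteq \vAut K$, Theorem~\ref{hofberger-semi-direct} provides a unique factorisation $\sigma = \alpha\beta$ with $\alpha \in \Int\Aut K$ and $\beta \in \Ext\Aut K$. By Proposition~\ref{external-are-strongly-lin}(ii), $\beta \in \vAut^{\Str}K$, and since $\vAut^{\Str}K$ is a group (Proposition~\ref{external-are-strongly-lin}(i)), also $\beta\inv \in \vAut^{\Str}K$. Therefore $\alpha = \sigma\beta\inv$ lies in $\vAut^{\Str}K \cap \Int\Aut K = \Int\Aut^{\Str}K$, giving the required factorisation.

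I do not expect any serious obstacle here: the content of the statement has effectively been prepared by Proposition~\ref{external-are-strongly-lin}, which both provides the ambient group structure of $\vAut^{\Str}K$ and certifies that external automorphisms are automatically strongly additive. Were it not for that second fact, the external factor could shrink upon passing to strongly additive elements and no such clean decomposition would be available. The only care needed is to invoke the uniqueness of the factorisation from Theorem~\ref{hofberger-semi-direct} when identifying the internal part of $\sigma$.
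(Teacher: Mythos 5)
Your proof is correct and follows essentially the same route as the paper: intersect the decomposition of Theorem~\ref{hofberger-semi-direct} with $\vAut^{\Str}K$, use Proposition~\ref{external-are-strongly-lin} to see that the external factor consists of strongly additive maps so that $\alpha=\sigma\beta\inv$ lands in $\Int\Aut^{\Str}K$, and obtain normality and trivial intersection by restriction. The paper's own proof is the same argument, phrased almost word for word.
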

\begin{proof}
	Let $ \sigma\in\vAut ^{\Str} K $. By Theorem~\ref{hofberger-semi-direct} there exist $ \tau\in\Ext\Aut   K $ and $ \rho\in\Int\Aut  K $ such that $ \sigma = \rho\tau $. Now, by Remark~\ref{external-are-strongly-lin}, $ \tau $ is strongly additive, and since $ \rho = \sigma\tau\inv $ then $ \rho $ is also strongly additive. So $ \Ext\Aut  K $ and $ \Int\Aut ^{\Str} K $ generate $ \vAut ^{\Str} K $. Moreover, $ \Ext\Aut  K \cap\Int\Aut ^{\Str} K\subseteq \Ext\Aut  K \cap\Int\Aut  K = \{\id_ K\} $. Finally, as remarked above, $ \Int\Aut ^{\Str} K $ is a normal subgroup of $ \vAut ^{\Str} K $. The statement follows.
\end{proof}

\begin{lemma}\label{int-str-lin:lemma}
Let $ K $ be a Hahn field satisfying the canonical second lifting property. The following hold.
\begin{eqnarray}
\Int\Aut ^{\Str} K &= \uAut ^{\Str} K\rtimes\gexp{G} K;\\
\Int\Aut_{(k)} ^{\Str} K =\Int\Aut_k ^{\Str} K &= \uAut_k ^{\Str} K\rtimes\gexp{G} K.\label{eq:Int_k^{>0}-semidir}
\end{eqnarray}	
\end{lemma}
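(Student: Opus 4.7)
The plan is to bootstrap this lemma from the non-strongly-additive decomposition in Theorem~\ref{prop:internal-semi-direct} by checking that strong additivity is preserved under the factorisation produced there. The key observation is that one factor of the decomposition is automatically strongly additive, so the other factor inherits strong additivity from the original automorphism.

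More precisely, I would start with $\sigma \in \Int\Aut^{\Str} K$ and apply Lemma~\ref{lemma:int_decomp_rho_tau} (or equivalently the sequence argument in the proof of Theorem~\ref{prop:internal-semi-direct}) to write $\sigma = \rho\tau$ with $\rho \in \gexp{G}K$ and $\tau \in \uAut K$. By Proposition~\ref{external-are-strongly-lin}(iii), $\rho$ is strongly additive, and by Proposition~\ref{external-are-strongly-lin}(i), $\vAut^{\Str}K$ is a group, so $\tau = \rho^{-1}\sigma \in \vAut^{\Str}K$. Hence $\tau \in \uAut^{\Str}K$, which shows that $\uAut^{\Str}K$ and $\gexp{G}K$ generate $\Int\Aut^{\Str}K$. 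The trivial intersection $\uAut^{\Str}K \cap \gexp{G}K = \{\id_K\}$ is inherited from $\uAut K \cap \gexp{G}K = \{\id_K\}$, and normality of $\uAut^{\Str}K$ in $\Int\Aut^{\Str}K$ follows by intersecting the normal inclusion $\uAut K \unlhd \Int\Aut K$ (Definition~\ref{def:1-aut}) with $\vAut^{\Str}K$. This yields the first equation.

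For the second equation, combine three facts: first, $\gexp{G}K \leq \vAut_k K$ by Remark~\ref{rmk:G-Exp-properties}(iv), so in the decomposition $\sigma = \rho\tau$ above, $\rho$ is always $k$-stable (indeed trivial on $k$); second, Lemma~\ref{lemma:1-Aut-k=1-Aut(k)} yields $\uAut_{(k)}^{\Str}K = \uAut_k^{\Str}K$; and third, by Proposition~\ref{rmk:internal-key-property} we have $\Int\Aut_{(k)}K = \Int\Aut_kK$, hence $\Int\Aut_{(k)}^{\Str}K = \Int\Aut_k^{\Str}K$. Intersecting the first decomposition with $\vAut_k K$ and using that, in the factorisation $\sigma = \rho\tau$, $\sigma \in \vAut_kK$ and $\rho \in \vAut_kK$ together force $\tau \in \vAut_kK$, gives the second equation.

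I do not foresee a significant obstacle: the content is entirely formal, since the strongly additive version follows by intersecting the ambient decomposition with the subgroup $\vAut^{\Str}K$, once it is known that the complementary factor $\gexp{G}K$ already sits inside $\vAut^{\Str}K$. The only thing that requires minor care is making sure that the normality claim survives the intersection, but this is standard: a normal subgroup of $\Int\Aut K$ intersected with any subgroup $H \leq \Int\Aut K$ is normal in $H$.
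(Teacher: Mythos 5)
Your proposal is correct and is essentially the paper's own argument: the paper proves this lemma in one line by intersecting the decomposition of Theorem~\ref{prop:internal-semi-direct} with $\vAut^{\Str}K$ and invoking Proposition~\ref{external-are-strongly-lin}(iii), which is exactly the reasoning you spell out in detail (the complementary factor $\gexp{G}K$ is already strongly additive, so the $\uAut K$-factor inherits strong additivity from $\sigma$). Your explicit verification of the generation, trivial intersection, and normality claims is just an unpacking of what the paper leaves implicit.
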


\begin{proof}
	Follows from Theorem~\ref{prop:internal-semi-direct} taking intersections with $ \vAut^{\Str} K $ and applying part~(iii) of Proposition~\ref{external-are-strongly-lin}.
\end{proof}

Finally we have

\begin{theorem}\label{eq:IntStr-semi-direct}\label{rmk:1-Aut-is-normal}\label{thm:hofberger-enhanced-str}
Let $ K $ be a Hahn field with the first and canonical second lifting property.
Then
\begin{align*}
\vAut ^{\Str}  K &= (\uAut ^{\Str} K\rtimes\gexp{G} K) \rtimes \Ext\Aut   K\\
&\simeq (\uAut ^{\Str} K\rtimes\Hom(G,k^\times)) \rtimes (\Aut k \times \oAut G);\\
\vAut_{(k)} ^{\Str}  K &= (\uAut_k ^{\Str} K\rtimes\gexp{G} K) \rtimes \Ext\Aut   K\\
&\simeq (\uAut_k ^{\Str} K\rtimes\Hom(G,k^\times)) \rtimes (\Aut k \times \oAut G);\\
\vAut_{k} ^{\Str}  K &= (\uAut_k ^{\Str} K\rtimes\gexp{G} K) \rtimes \Ext\Aut_k   K\\
&\simeq (\uAut_k ^{\Str} K\rtimes\Hom(G,k^\times)) \rtimes \oAut G.
\end{align*}\qed
\end{theorem}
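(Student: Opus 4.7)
The plan is to combine the strongly additive analogue of Hofberger's decomposition (Proposition~\ref{prop:hofberger-str-linear}) with the decomposition of $\Int\Aut^{\Str}K$ from Lemma~\ref{int-str-lin:lemma}, and then apply the structural identifications $\gexp{G}K\simeq\Hom(G,k^\times)$ (Definition~\ref{def:g-exp}) and $\Ext\Aut K\simeq\Aut k\times\oAut G$, $\Ext\Aut_kK\simeq\oAut G$ (Corollary~\ref{prop:external-direct}). For the first line of the theorem, substituting Lemma~\ref{int-str-lin:lemma} into Proposition~\ref{prop:hofberger-str-linear} immediately produces the iterated semi-direct product; the isomorphism line is then just the replacement of $\gexp{G}K$ and $\Ext\Aut K$ by their concrete incarnations.

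For the third line (the $k$-automorphism case), the strategy is to intersect the decomposition for $\vAut^{\Str}K$ with $\vAut_kK$. Explicitly, given $\sigma\in\vAut_k^{\Str}K$, Proposition~\ref{prop:hofberger-str-linear} yields a unique factorisation $\sigma=\alpha\beta$ with $\alpha\in\Int\Aut^{\Str}K$ and $\beta\in\Ext\Aut K$; I would observe that the induced automorphism on $k$ of $\alpha$ is always trivial (Definition~\ref{def-int-field}) while that of $\beta$ equals $\sigma_k=\id_k$, forcing $\beta\in\Ext\Aut_kK$ and hence $\alpha\in\Int\Aut_k^{\Str}K$. Combining with \eqref{eq:Int_k^{>0}-semidir} then delivers the desired four-factor decomposition, and the second line of the theorem is obtained by the same argument using \eqref{eq:hofberger-k-stable} and the fact that the internal component of any $k$-stable automorphism is automatically a $k$-automorphism (Proposition~\ref{rmk:internal-key-property} and Lemma~\ref{lemma:1-Aut-k=1-Aut(k)}).

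The bookkeeping I expect to be the most delicate point is keeping track of which subgroup normalises which and verifying that every semi-direct product written in the theorem is an \emph{internal} semi-direct product: that is, for each pair of adjacent factors, the normality claim must be read off from the results cited (Theorem~\ref{prop:internal-semi-direct}, Proposition~\ref{prop:hofberger-str-linear}) together with the stability of strong additivity under composition and inversion (Proposition~\ref{external-are-strongly-lin}(i)). Once this is in place, all six equalities in the statement follow by a purely formal combination of prior results, and no new computation is required.
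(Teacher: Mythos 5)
Your proposal is correct and follows exactly the paper's own route: the paper proves the equalities by combining Proposition~\ref{prop:hofberger-str-linear} with Lemma~\ref{int-str-lin:lemma}, and obtains the isomorphisms from Definition~\ref{def:g-exp} and Remark~\ref{rmk:Psi1-Psi2} (equivalently Corollary~\ref{prop:external-direct}). Your explicit intersection argument for the $k$-stable and $k$-automorphism cases merely fills in bookkeeping the paper leaves implicit, so no further comment is needed.
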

\begin{proof}
Proposition~\ref{prop:hofberger-str-linear} and Lemma~\ref{int-str-lin:lemma} yield the equalities.
The isomorphisms are now a consequence of Lemma~\ref{lemma:1Aut-normal} Definition~\ref{def:g-exp} and Remark~\ref{rmk:Psi1-Psi2}.
\end{proof}

\noindent
From now we will focus on the subgroups $ \vAut_{(k)}^{\Str}K $ and $ \vAut_{k}^{\Str}K $.
In Theorem~\ref{thm:hofberger-enhanced-str} we see that all components, except possibly $ \uAut_kK $, only depend  on $ k $ and $ G $.
The next section is devoted to the study of the remaining component: $ \uAut^{\Str}_{k} K $. Recall that, by Lemma~\ref{lemma:1-Aut-k=1-Aut(k)}, we have $ \uAut_{(k)} K = \uAut_k K $.

\subsection[Description of the groups of strongly additive internal and 1-automorphisms]{Description of $ \boldsymbol{\Int\Aut^{\Str}K} $ and $ \boldsymbol{\uAut^{\Str}K} $}\label{subsec:general}

Schilling~\cite{schilling44} describes the group $ \vAut_k\K $, for $ \K =k\pow{\Z} $, in terms of $ U_\K $, the group of units of the valuation ring of $ \K $.
Drawing inspiration from his work,
we aim at an {explicit} description of the groups $ \vAut_{(k)}^{\Str}K $ and $ \vAut_{k}^{\Str}K $, for an arbitrary Hahn field $ K $, in terms of the fundamental objects connected to $  K $. Let $ U=U_K $.
We will further describe the group $ \Int\Aut_k^{\Str} K $ in terms of (a subgroup of) the group $ \Hom(G,U) $. Then we will deduce a description of $ \uAut_{(k)}^{\Str}K $ in terms of a subgroup of $ \Hom(G,1+I_K) $.
In Section~\ref{subsect-laurent-series} we will retrieve Schilling's result as a special case of ours.

Let $ K $ be a Hahn field and $ \sigma\in\Int\Aut_{k} K $.
Recall that $ \sigma $ satisfies the following conditions: for all $ a\in  K $ we have
	 $ v(\sigma(a)) = v(a) $ and	   $ \sigma|_k = \id_k. $
These properties of $ \sigma $ imply that $ \sigma(t^g) = u_\sigma(g)t^g$ for some $ u_\sigma(g) \in U $ depending on $ g $. For all $ \sigma\in\Int\Aut K $ define
$ u_\sigma\colon G\to U $  by $ g\mapsto t^{-g}\sigma(t^g) $.

\begin{lemma}
Let $ K $ be a Hahn field.
	For all $ \sigma \in \Int\Aut K $ the map $ u_\sigma $ is a group homomorphism.
\end{lemma}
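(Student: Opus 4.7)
The plan is short because the statement is essentially a direct computation, but two small points need to be checked: first that $u_\sigma$ actually lands in $U$, and second that it respects the group operations.

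First I would verify well-definedness. Since $\sigma \in \Int\Aut K$, Proposition~\ref{rmk:internal-key-property}(i) gives $v(\sigma(t^g)) = v(t^g) = g$, so
\[
v(u_\sigma(g)) = v(t^{-g}\sigma(t^g)) = -g + g = 0,
\]
which means $u_\sigma(g) \in U = U_K$, the group of units of the valuation ring. Hence $u_\sigma \colon G \to U$ is a well-defined map.

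Next I would check multiplicativity. For any $g,h \in G$, using that $\sigma$ is a ring homomorphism and that $t^g t^h = t^{g+h}$ in $K$, one computes
\[
u_\sigma(g+h) \;=\; t^{-(g+h)}\sigma(t^{g+h}) \;=\; t^{-g}t^{-h}\sigma(t^g)\sigma(t^h) \;=\; \bigl(t^{-g}\sigma(t^g)\bigr)\bigl(t^{-h}\sigma(t^h)\bigr) \;=\; u_\sigma(g)\,u_\sigma(h).
\]
Also $u_\sigma(0) = t^0 \sigma(t^0) = 1$, so $u_\sigma$ sends the identity of $(G,+)$ to the identity of $(U,\cdot)$. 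This establishes that $u_\sigma$ is a group homomorphism from $(G,+)$ to $(U,\cdot)$.

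There is no real obstacle here; the only subtle point is confirming that $\sigma(t^g)$ really is a unit times $t^g$, which is exactly what the internality condition $v(\sigma(a)) = v(a)$ guarantees. I would expect the paper to subsequently use this lemma to set up an injection $\Int\Aut_k K \hookrightarrow \Hom(G, U)$ via $\sigma \mapsto u_\sigma$, which motivates phrasing the result this way.
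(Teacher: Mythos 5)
Your proof is correct and follows essentially the same route as the paper: the multiplicativity computation $u_\sigma(g+h)=t^{-g}t^{-h}\sigma(t^g)\sigma(t^h)=u_\sigma(g)u_\sigma(h)$ is exactly the paper's argument. The well-definedness check (that $v(u_\sigma(g))=0$, so $u_\sigma(g)\in U$) is handled by the paper in the paragraph preceding the lemma rather than in the proof itself, but your inclusion of it is harmless and arguably tidier.
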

\begin{proof}
	Let $ \sigma \in \Int\Aut  K $ and $ g,h\in G $. Then we have
	\begin{align*}
	u_\sigma(g+h) &= t^{-(g+h)}\sigma(t^{g+h})\\
	&= t^{-g}t^{-h}\sigma(t^g)\sigma(t^h)\\
	&= t^{-g}\sigma(t^g)t^{-h}\sigma(t^h)\\
	&=u_\sigma(g)u_\sigma(h).
	\end{align*}
\end{proof}
\noindent This gives rise to a map $ \cS' $ from $ \Int\Aut K $ to the set $ \Hom(G,U) $ of homomorphisms of $G$ into $ U $ defined by $ \cS'
(\sigma)=u_\sigma $.
We are interested in the restriction of $ \cS' $ to $ \Int\Aut_{(k)}^{\Str} K = \Int\Aut_{k}^{\Str} K $, which we will denote by $ \cS $:
\begin{equation}\label{eq:S}
\cS\colon\Int\Aut_{k}^{\Str}  K\to \Hom(G,U)\quad
\sigma\mapsto u_\sigma.
\end{equation}

\begin{lemma}
	\label{S-injective-str-lin}
	The map $ \cS $ defined in \eqref{eq:S} is injective.
\end{lemma}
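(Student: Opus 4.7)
The plan is to show that a strongly additive internal $k$-automorphism is completely determined by its values on the monic monomials $t^g$, $g\in G$. Since $u_\sigma(g) = t^{-g}\sigma(t^g)$, the data of $u_\sigma$ is equivalent to the data of the collection $\{\sigma(t^g): g\in G\}$, so injectivity of $\mathcal{S}$ will follow once we show that this collection determines $\sigma$.

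Concretely, suppose $\sigma,\tau\in\Int\Aut_k^{\Str}K$ with $\mathcal{S}(\sigma)=\mathcal{S}(\tau)$. I would first observe that $u_\sigma=u_\tau$ immediately gives $\sigma(t^g)=\tau(t^g)$ for every $g\in G$. Next, since $\sigma,\tau\in\Int\Aut_kK$ (recall $\Int\Aut_{(k)}K=\Int\Aut_kK$ by Proposition~\ref{rmk:internal-key-property}), both automorphisms fix $k$ pointwise, so for every $a\in k$ and every $g\in G$ we have
\[
\sigma(at^g)=a\,\sigma(t^g)=a\,\tau(t^g)=\tau(at^g).
\]
Hence $\sigma$ and $\tau$ coincide on every monomial of $K$.

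The crucial step is then to bootstrap this pointwise agreement on monomials to agreement on all of $K$ via strong additivity. Given an arbitrary $a=\sum_{g\in G}a_gt^g\in K$, Definition~\ref{def:strongly-linear} guarantees that both families $\{\sigma(a_gt^g):g\in\supp a\}$ and $\{\tau(a_gt^g):g\in\supp a\}$ are $K$-summable, and that
\[
\sigma(a)=\sum_{g\in\supp a}\sigma(a_gt^g),\qquad \tau(a)=\sum_{g\in\supp a}\tau(a_gt^g).
\]
Since $\sigma(a_gt^g)=\tau(a_gt^g)$ for every $g$, the two sums coincide term by term, so $\sigma(a)=\tau(a)$. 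As $a$ was arbitrary, $\sigma=\tau$, proving injectivity of $\mathcal{S}$.

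There is no real obstacle here; the proof is a direct unwinding of the definitions. The essential point worth highlighting is that the strong additivity hypothesis is indispensable: without it, agreement of two valuation preserving $k$-automorphisms on the monomials $\{at^g:a\in k,\,g\in G\}$ (equivalently on the group ring $k[G]$) would not be enough to conclude agreement on arbitrary elements of $K$, since $k[G]$ need not be dense in $K$ in any topology we have at hand. In other words, $\mathcal{S}'$ restricted to $\Int\Aut_kK$ need not be injective in general; it is the strong additivity that makes $\mathcal{S}$ injective.
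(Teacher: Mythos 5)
Your proof is correct and follows essentially the same route as the paper: deduce $\sigma(t^g)=\tau(t^g)$ from $u_\sigma=u_\tau$, extend to monomials using that internal automorphisms fix $k$, and conclude via strong additivity. You merely spell out the final step that the paper compresses into the parenthetical ``since $\sigma$ and $\tau$ are strongly additive $k$-automorphisms''.
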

\begin{proof}
	Let $ \sigma,\tau \in\Int\Aut_{k}^{\Str}  K $ be such that $ \cS(\sigma) = u_\sigma = u_\tau = \cS(\tau) $. Then, for all $ g\in G $, we have $ t^{-g}\sigma(t^g) = u_\sigma(g) = u_\tau(g) = t^{-g}\tau(t^g)$ which implies $ \sigma(t^g) = \tau(t^g) $ and this (since $ \sigma $ and $ \tau $ are strongly additive $ k $-automorphisms) implies that
	$ \sigma(a) = \tau(a) $ for all $ a\in  K $, hence $ \sigma = \tau $.
\end{proof}
\noindent
Now we determine the image of $ \cS $.

\begin{defn}\label{def:summable-map}
	An element $ u\in \Hom(G,U) $ is $ K $-\emph{summable} if, for every $ a \in K $, the family $ \{a_gu(g)t^g:g\in \supp a \}$ is $ K $-summable. Let us denote the set of summable elements of $  \Hom(G,U) $ by $ \Hom^+(G,U) $.
\end{defn}

\begin{lemma}\label{lemma:cS-surjective}
	We have $ \im\cS= \Hom^+(G,U) $. Therefore $ \cS $ corestricts to a bijection
	\begin{align}\label{eq:S-corestr}
	\begin{split}
	\cS\colon\Int\Aut^{\Str}_{k}  K&\to \Hom^+(G,U).
	\end{split}
	\end{align}
\end{lemma}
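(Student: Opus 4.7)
The inclusion $\im \cS \subseteq \Hom^+(G,U)$ is the easy direction. Given $\sigma \in \Int\Aut_k^{\Str} K$ and any $a = \sum a_g t^g \in K$, strong additivity together with $\sigma|_k = \id_k$ yields
$$\sigma(a) = \sum_g \sigma(a_g t^g) = \sum_g a_g \sigma(t^g) = \sum_g a_g u_\sigma(g) t^g \in K,$$
so the family $\{a_g u_\sigma(g) t^g : g \in \supp a\}$ is $K$-summable and hence $u_\sigma \in \Hom^+(G,U)$.

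For the reverse inclusion, given $u \in \Hom^+(G,U)$, the natural candidate is the map $\sigma_u \colon K \to K$ defined by
$$\sigma_u\brackets{\sum a_g t^g} := \sum a_g u(g) t^g,$$
which is well-defined by the $K$-summability hypothesis on $u$. The plan is to verify in order: (i) $\sigma_u$ is additive (immediate) and multiplicative (using $u(g+h) = u(g) u(h)$ together with the standard rearrangement of summable families in $\K$ to identify $\sigma_u(ab)$ with $\sigma_u(a)\sigma_u(b)$); (ii) $\sigma_u$ fixes $k$ pointwise, since $u(0) = 1$; (iii) $\sigma_u$ is valuation preserving and internal, because $v(u(g) t^g) = g$ (as $u(g) \in U$ has valuation $0$) and $\sigma_u(a)_0 = a_0$ for every $a \in R_K$, so Proposition~\ref{rmk:internal-key-property} applies; (iv) the identity $\cS(\sigma_u)(g) = t^{-g} \sigma_u(t^g) = u(g)$ is tautological.

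The genuine content is showing that $\sigma_u$ is a bijection and strongly additive in the automorphism sense — equivalently, establishing that $u^{-1} \in \Hom^+(G,U)$ as well, at which point $\sigma_{u^{-1}}$ is defined and furnishes a two-sided inverse of $\sigma_u$. Injectivity of $\sigma_u$ is automatic since it is a nonzero ring homomorphism into a field. For surjectivity I would fix $b = \sum b_g t^g \in K$; the candidate preimage is $a := \sum b_g u(g)^{-1} t^g$, and one must show that the family $\{b_g u(g)^{-1} t^g\}$ is $K$-summable. This is the main obstacle, and my approach would be to exploit the decomposition $u(g) = c_g (1 + \epsilon_g)$ with $c_g \in k^\times$ and $\epsilon_g \in I_K$, expand $(1 + \epsilon_g)^{-1}$ as a geometric series in $\epsilon_g$, and use the $K$-summability of $u$ applied to carefully chosen auxiliary elements of $K$ to bound the supports and finite multiplicities in $\Supp\{b_g u(g)^{-1} t^g\}$. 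Once $u^{-1} \in \Hom^+(G,U)$ is established, the relations $\sigma_u \circ \sigma_{u^{-1}} = \sigma_{u^{-1}} \circ \sigma_u = \id_K$ are immediate from (i)--(iv) applied to $u$ and $u^{-1}$, placing $\sigma_u$ in $\Int\Aut_k^{\Str} K$; combined with the injectivity of $\cS$ from Lemma~\ref{S-injective-str-lin}, this yields the claimed bijection of~\eqref{eq:S-corestr}.
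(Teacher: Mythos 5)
Your first direction and your construction of $\sigma_u$ coincide with the paper's proof, and you are right that the point the paper passes over quickly is why $\sigma_u$ is bijective with $K$-summable inverse. But the specific route you propose for that step is not just incomplete, it is wrong: the pointwise inverse $u\inv\colon g\mapsto u(g)\inv$ does \emph{not} produce the inverse automorphism, i.e.\ $\sigma_{u\inv}\neq \sigma_u\inv$ in general. The reason is already visible in Proposition~\ref{prop:int-iso-summ}: $\cS$ transports composition to the \emph{twisted} product $\molt$ of Definition~\ref{def:bullet}, $(u_\tau\molt u_\sigma)(g)=\tau(u_\sigma(g))\,u_\tau(g)$, not to the pointwise product on $\Hom(G,U)$. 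Hence $\cS(\sigma_u\circ\sigma_{u\inv})(g)=\sigma_u\brackets{u(g)\inv}u(g)$, which equals $1$ only if $\sigma_u$ fixes every $u(g)$ --- false as soon as some $u(g)\notin k$. Concretely, take $K=\bbL=k\pow{\Z}$ and $u(n)=(1+t)^n$: then $\sigma_u$ is the substitution $t\mapsto t+t^2$, $\sigma_{u\inv}$ is the substitution $t\mapsto t(1+t)\inv$, and
\[
\sigma_{u\inv}\brackets{\sigma_u(t)}=\frac{t}{1+t}+\frac{t^2}{(1+t)^2}=\frac{t+2t^2}{(1+t)^2}\neq t=\frac{t+2t^2+t^3}{(1+t)^2}.
\]
So even after establishing $u\inv\in\Hom^+(G,U)$, the identities $\sigma_u\circ\sigma_{u\inv}=\sigma_{u\inv}\circ\sigma_u=\id_K$ that your argument relies on are simply false, and the final step of your plan collapses.

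The correct repair uses a different factorisation. Write $u(g)=c_g(1+\epsilon_g)$ with $c_g\in k^\times$ and $\epsilon_g\in I_K$; then $\sigma_u=\sigma_{1+\epsilon}\circ\rho_c$, where $\rho_c$ is a $G$-exponentiation (whose inverse \emph{is} the pointwise inverse, by \eqref{eq:rho-x-inv}, precisely because its values lie in $k^\times$ and it fixes $k$), and $\sigma_{1+\epsilon}=\id_K+E$ with $E\brackets{\sum a_gt^g}=\sum a_g\epsilon_gt^g$ strictly valuation-increasing. The inverse of $\id_K+E$ is the strongly additive map $\sum_{n\geq 0}(-E)^n$, and verifying that this series is $K$-summable is the genuine content of the step --- content which, to be fair, the paper's own one-line assertion that ``$\sigma\in\vAut_k^{\Str}K$ is well defined'' also leaves implicit. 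The resulting inverse corresponds under $\cS$ to $g\mapsto\sigma_u\inv\brackets{u(g)}\inv$, not to $g\mapsto u(g)\inv$.
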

\begin{proof}
	Let $ u\in\im\cS $.
	Then $ u = u_{\sigma} $ for some $ \sigma\in \Int\Aut_{k}^{\Str} K $.
	Now let $a = \sum a_gt^g \in  K$.
	Since $ \sigma\in \Int\Aut_{k}^{\Str} K $ we have
	$ \sigma(a) = \sum a_g\sigma(t^g) = \sum a_gu_\sigma(g)t^g $,
	hence the family $ \{a_gu_{\sigma}(gt^g): g\in \supp a\} $ is $ K $-summable. Therefore $ u\in\Hom^{\Str}(G,U) $ and so $ \im\cS \subseteq \Hom^{\Str}(G,U) $.
	
	Conversely, let $ u\in  \Hom^{\Str}(G,U) $ and define $ \sigma $ by $ \sigma\brackets{\sum a_gt^g} = \sum a_gu(g)t^g  $ for all $ \sum a_gt^g \in K $. Since $ u $ is $ K $-summable, $ \sigma\in\vAut_k^{\Str}K  $ is well defined.
	Now we show $ \sigma\in\Int\Aut_{k}K $. For $ g\in G $ let $ u(g) = u_0+\epsilon(g) $ with $u_0\in k^\times,\ \epsilon(g)\in I_K $, let $ a = \sum a_gt^g\in K $ and set $ v(a)=h $. Note that $ v(a_gu(g)t^g)=g $ for all $ g\in\supp a $. 
	Moreover, $ \bar{\sigma}(a_0+I_K) = \sigma(a_0)_0 +I_K = a_0u(0)+I_K = a_0+I_K $
	so $ \bar\sigma=\id $ ($ \bar\sigma $ was defined in Remark~\ref{rmk:val-pres-induces-ord}).
	So $ \sigma\in\Int\Aut_k^{\Str}K $ and, by definition of $ \sigma $ we have  $ \sigma(t^g) = u(g)t^g $ thus $ u = u_\sigma \in\im\cS$.
	Hence $ \Hom^{\Str}(G,1+I_K) \subseteq \im\cS $, which completes the proof.
\end{proof}

\begin{defn}\label{def:bullet}
	We define an operation\footnote{This operation corresponds to the crossed representation defined by Schilling for $ \Aut_k\bbL $ where $\bbL= k\pow{\Z} $. See Section~\ref{subsect-laurent-series} for more details.}
	\func{\molt}{\Hom^+(G,U)\times\Hom^+(G,U)}{\Hom^+(G,U)}{(u_\tau,u_\sigma)}{[u_\tau\molt u_\sigma:g\mapsto\tau(u_\sigma(g))u_\tau(g)].}
\end{defn}

\begin{prop}
	\label{prop:int-iso-summ}
	The map $ \cS\colon\Int\Aut^{\Str}_{k} K \ra  \Hom^{\Str}(G,U) $ defined in \eqref{eq:S-corestr} is a group isomorphism, if we equip $ \Hom^{\Str}(G,U) $ with the new operation $ \molt $:
	\begin{equation}\label{eq:cS-isom}
	\cS\colon(\Int\Aut_{k}^{\Str}K,\circ) \overset{\sim}{\lra}  (\Hom^+(G,U),\molt).
	\end{equation}
\end{prop}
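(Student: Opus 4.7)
The plan is to observe that, thanks to the preceding lemmas, essentially all the work is already done: by Lemma~\ref{S-injective-str-lin} the map $\cS$ is injective, and by Lemma~\ref{lemma:cS-surjective} it is surjective onto $\Hom^+(G,U)$. Hence the only remaining point to verify is that $\cS$ is a group homomorphism when the codomain is equipped with the twisted operation $\molt$ from Definition~\ref{def:bullet}. In particular, well-definedness of $\molt$ as a binary operation on $\Hom^+(G,U)$ will be a \emph{consequence} of the homomorphism identity, since for any $\sigma,\tau\in\Int\Aut_k^{\Str}K$ we will have $u_\tau\molt u_\sigma = u_{\tau\sigma} = \cS(\tau\circ\sigma)\in\im\cS = \Hom^+(G,U)$.

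The core of the proof is thus a direct computation. Fix $\sigma,\tau\in\Int\Aut_k^{\Str} K$ and $g\in G$. Using that $\sigma(t^g) = u_\sigma(g)t^g$ by the definition of $u_\sigma$, that $\tau$ is a ring homomorphism, and that $K$ is commutative, I would compute
\begin{align*}
u_{\tau\circ\sigma}(g)
&= t^{-g}\,(\tau\circ\sigma)(t^g)
= t^{-g}\,\tau\bigl(u_\sigma(g)\,t^g\bigr) \\
&= t^{-g}\,\tau(u_\sigma(g))\,\tau(t^g)
= \tau(u_\sigma(g))\cdot t^{-g}\tau(t^g) \\
&= \tau(u_\sigma(g))\,u_\tau(g),
\end{align*}
which is exactly $(u_\tau\molt u_\sigma)(g)$ as in Definition~\ref{def:bullet}. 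Since this holds for every $g\in G$, we obtain $\cS(\tau\circ\sigma) = \cS(\tau)\molt\cS(\sigma)$.

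To finish cleanly I would also make two small remarks. First, that $\tau(u_\sigma(g))$ indeed lies in $U$, which is needed to recognise $\tau(u_\sigma(g))u_\tau(g)$ as an element of $U$: this follows from Remark~\ref{rmk:val-pres-induces-ord}, since $\tau\in\vAut K$ implies $\tau(U)=U$ and $u_\sigma(g)\in U$. Second, the identity element of $(\Hom^+(G,U),\molt)$ corresponds under $\cS$ to $\id_K$, namely the constant map $g\mapsto 1$, and inverses exist because $\cS$ is a bijection from a group; alternatively one checks directly that $\cS(\sigma^{-1}) = u_{\sigma^{-1}}$ satisfies $u_\sigma\molt u_{\sigma^{-1}} = u_{\sigma^{-1}}\molt u_\sigma = \mathbf{1}$.

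There is no real obstacle: the only mildly nontrivial point is bookkeeping — keeping track of the order of composition $\tau\circ\sigma$ versus the order in $u_\tau\molt u_\sigma$ (notice the outer automorphism in $\molt$ acts on the argument coming from the \emph{inner} factor of the composition), which is why the operation $\molt$ is twisted rather than pointwise. Once this is correctly matched, the verification is one line.
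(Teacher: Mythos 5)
Your proposal is correct and follows essentially the same route as the paper: bijectivity is delegated to Lemmas~\ref{S-injective-str-lin} and~\ref{lemma:cS-surjective}, and the homomorphism property is verified by the identical one-line computation $u_{\tau\sigma}(g)=t^{-g}\tau(u_\sigma(g)t^g)=\tau(u_\sigma(g))u_\tau(g)$. Your added remarks (that $\tau(u_\sigma(g))\in U$ via Remark~\ref{rmk:val-pres-induces-ord}, and that closure of $\molt$ on $\Hom^+(G,U)$ follows from $u_\tau\molt u_\sigma=\cS(\tau\circ\sigma)\in\im\cS$) are correct refinements the paper leaves implicit.
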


\begin{proof}
	By Lemmas~\ref{S-injective-str-lin} and \ref{lemma:cS-surjective} the map \eqref{eq:cS-isom} is bijective.
	It remains to show that it is a group homomorphism.
	Let $ \sigma,\tau \in \Int\Aut^{\Str}_{k} K$ and let $ g\in G $. Then we have
	\begin{align*}
	u_{\tau \sigma}(g) &= t^{-g}(\tau \sigma)(t^g)
	=t^{-g}\tau(\sigma(t^g))
	= t^{-g}\tau(u_\sigma(g)t^g) = t^{-g}\tau(u_\sigma(g))\tau(t^g)\\
	&= t^{-g}\tau(u_\sigma(g))t^gu_\tau(g)
	= \tau(u_\sigma(g))u_\tau(g)
	=(u_\tau \molt u_\sigma)(g).
	\end{align*}
\end{proof}


\begin{cor}\label{cor:X-decomposition}
	Restricting $ \cS $ we get
	\begin{align}
	(\gexp{G}K,\circ) &\simeq \brackets{\Hom(G,k^\times, ),\times}=\brackets{\Hom(G,k^\times), \cdot}\label{eq:S-on-Gexp}\\
	(\uAut_{k}^{\Str}K,\circ) &\simeq \brackets{\Hom^+(G,1+I_K),\times}\label{eq:S-on-1Aut}
	\end{align}
	and thus
	\begin{equation}\label{eq:isom-on-X}
	(\Hom^+(G,U),\molt) \simeq 
	(\Hom^+(G,1+I_K),\molt) \rtimes (\Hom(G,k^\times),\cdot).
	\end{equation}
	
\end{cor}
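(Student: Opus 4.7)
The plan is to transport, through the group isomorphism $\cS\colon (\Int\Aut_k^{\Str}K,\circ) \overset{\sim}{\to} (\Hom^+(G,U),\molt)$ of Proposition~\ref{prop:int-iso-summ}, the internal semi-direct product decomposition~\eqref{eq:Int_k^{>0}-semidir} of Lemma~\ref{int-str-lin:lemma}. The remaining work is to identify the images under $\cS$ of the two factors $\gexp{G}K$ and $\uAut_k^{\Str}K$, together with the operations they inherit.

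For the $G$-exponentiation factor, a direct computation gives, for any $\rho_x\in\gexp{G}K$, $u_{\rho_x}(g) = t^{-g}\rho_x(t^g) = x^g \in k^\times$, so $\cS(\rho_x) = x$. Hence $\cS$ restricts to a bijection $\gexp{G}K \to \Hom(G,k^\times)$, surjectivity being guaranteed by the canonical second lifting property. To verify that on this image the twisted product $\molt$ collapses to pointwise multiplication $\cdot$, I would invoke Remark~\ref{rmk:G-Exp-properties}(iv): every $\rho_y\in\gexp{G}K$ is trivial on $k$, so
\[(u_{\rho_y}\molt u_{\rho_x})(g) = \rho_y(x^g)\,y^g = x^g y^g = (x\cdot y)(g),\]
which establishes~\eqref{eq:S-on-Gexp}.

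For the 1-automorphism factor, if $\sigma\in\uAut_k^{\Str}K$ then Lemma~\ref{lemma:1Aut-normal}(i) applied to $a=t^g$ yields $\sigma(t^g)_g = 1$, hence $u_\sigma(g)\in 1+I_K$ and so $\cS(\sigma)\in\Hom^+(G,1+I_K)$. Conversely, any $u\in\Hom^+(G,1+I_K) \subseteq \Hom^+(G,U)$ corresponds via $\cS^{-1}$ to some $\sigma\in\Int\Aut_k^{\Str}K$, and the condition $u(g)_0 = 1$ immediately gives $\sigma(t^g)_g = u(g)_0 = 1$, forcing $\sigma\in\uAut_k^{\Str}K$. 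This produces the bijection~\eqref{eq:S-on-1Aut}; the group law on the right is the restriction of $\molt$, which is automatically a group operation since it is inherited from $\uAut_k^{\Str}K$ via $\cS$.

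Finally,~\eqref{eq:isom-on-X} follows by applying the group isomorphism $\cS$ directly to~\eqref{eq:Int_k^{>0}-semidir} and using the two identifications above. There is no substantial obstacle here: the entire corollary is a routine consequence of Proposition~\ref{prop:int-iso-summ} combined with Lemma~\ref{int-str-lin:lemma}; the sole point worth emphasising is that on $\Hom(G,k^\times)$ the twisted product $\molt$ collapses to ordinary pointwise multiplication (because $G$-exponentiations act trivially on $k$), whereas on the other factor $\Hom^+(G,1+I_K)$ it does not in general simplify.
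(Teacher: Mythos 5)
Your proposal is correct and follows essentially the same route as the paper: restrict $\cS$ to the two factors of the decomposition $\Int\Aut_k^{\Str}K = \uAut_k^{\Str}K\rtimes\gexp{G}K$, observe that $\cS|_{\gexp{G}K}$ inverts $P$ so its image is $\Hom(G,k^\times)$ on which $\molt$ collapses to $\cdot$ because $G$-exponentiations fix $k$, identify $\cS(\uAut_k^{\Str}K)$ with $\Hom^+(G,1+I_K)$, and transport the semi-direct product through the isomorphism of Proposition~\ref{prop:int-iso-summ}. The only difference is cosmetic: you spell out the converse inclusion $\cS^{-1}(\Hom^+(G,1+I_K))\subseteq\uAut_k^{\Str}K$, which the paper leaves implicit.
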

\begin{proof}
	
	Let $ \rho=\rho_x\in\gexp{G}K $. Then $ \cS(\rho)=u_\rho=x\in \Hom(G,k^\times) $, so $ \cS|_{\gexp{G}K} =  P\inv $, where $  P $ is the map given in Proposition~\ref{prop:embed-G-Exp}. We therefore have an isomorphism
	\begin{equation}
	\cS\colon \gexp{G}K\overset{\sim}{\longrightarrow}  (\Hom(G,k^\times),\molt).
	\end{equation}
	To prove \eqref{eq:S-on-Gexp} we notice that, for $ x,y\in \Hom(G,k^\times) $, corresponding to $ \sigma_x,\sigma_y\in\Int\Aut_{k}^{\Str}K $ we have 
	
	\begin{align*}
	(x\molt y) (g) = x(g)\cdot\sigma_x(y(g)) = x(g)y(g)
	\end{align*}
	because $ y(g)\in k^\times $ and $ \sigma_x\in\gexp{G}K\leq \Aut_kK $. So $ (\Hom(G,k^\times),\molt) = (\Hom(G,k^\times),\cdot) $, and \eqref{eq:S-on-Gexp} follows.
	
	Similarly, if $ \tau\in\uAut_{k}^{\Str}K $ then $ u_\tau\in\Hom(G,1+I_K) $. So $ \Hom^+(G,1+I_K):= (\Hom^+(G,U))\cap  \Hom(G,1+I_K) $. Then we have
	\begin{equation}
	\cS\colon\uAut_{k}^{\Str}K\overset{\sim}{\longrightarrow} \Hom^+(G,1+I_K)
	\end{equation}
	which proves \eqref{eq:S-on-1Aut}. Equation \eqref{eq:isom-on-X} now follows from Proposition~\ref{prop:int-iso-summ} applying \eqref{eq:Int_k^{>0}-semidir}, \eqref{eq:S-on-Gexp} and \eqref{eq:S-on-1Aut}.
\end{proof}

\noindent
Combining Theorem~\ref{thm:hofberger-enhanced-str} and Proposition~\ref{prop:int-iso-summ} we find
\begin{theorem}\label{thm:main}
	Let $ K $ be a Hahn field with the first and canonical second lifting property.
	Then
	\begin{align*}
		\vAut_{(k)}^{\Str}K &\simeq \left(\Hom^+(G,1+I_K)\rtimes \Hom(G,k^\times)\right)\rtimes(\Aut k\times \oAut G);\\
		\vAut_{k}^{\Str}K &\simeq \left(\Hom^+(G,1+I_K)\rtimes \Hom(G,k^\times)\right)\rtimes \oAut G.
	\end{align*}\qed
\end{theorem}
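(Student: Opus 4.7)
The plan is to derive both isomorphisms by combining Theorem~\ref{thm:hofberger-enhanced-str} with Corollary~\ref{cor:X-decomposition}. Theorem~\ref{thm:hofberger-enhanced-str} already supplies the 3-factor semi-direct product decompositions
$$\vAut_{(k)}^{\Str}K \simeq \bigl(\uAut_k^{\Str}K \rtimes \Hom(G,k^\times)\bigr)\rtimes (\Aut k \times \oAut G)$$
and the analogous one for $\vAut_k^{\Str}K$. Hence all that remains is to replace the factor $\uAut_k^{\Str}K$ by $\Hom^+(G,1+I_K)$ in a way that is compatible with the semi-direct product structures.

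The replacement is delivered by restricting the map $\cS$ of Proposition~\ref{prop:int-iso-summ} to $\uAut_k^{\Str}K$. By \eqref{eq:S-on-1Aut} of Corollary~\ref{cor:X-decomposition}, this restriction is a group isomorphism $\cS\colon\uAut_k^{\Str}K \xrightarrow{\sim} \Hom^+(G,1+I_K)$. More than that: $\cS$ is an isomorphism on all of $\Int\Aut_k^{\Str}K$, and by \eqref{eq:isom-on-X} it transports the decomposition $\Int\Aut_k^{\Str}K = \uAut_k^{\Str}K \rtimes \gexp{G}K$ of Lemma~\ref{int-str-lin:lemma} to the internal semi-direct product
$$\Int\Aut_k^{\Str}K \simeq \Hom^+(G,1+I_K)\rtimes \Hom(G,k^\times).$$
Thus the inner two factors of the Hofberger decomposition are exactly what we want.

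Finally, one feeds this identification into the outer semi-direct product coming from Theorem~\ref{thm:hofberger-enhanced-str}. Since the conjugation action of $\Ext\Aut K \simeq \Aut k \times \oAut G$ (respectively $\Ext\Aut_k K \simeq \oAut G$) on $\Int\Aut_k^{\Str}K$ is a group-theoretic action and $\cS$ is a group isomorphism, the outer $\rtimes$ transports verbatim through $\cS$. Both displayed isomorphisms then follow by substitution. The only point of genuine care—and therefore the only potential obstacle—is keeping straight which operation ($\molt$ versus pointwise $\cdot$) governs each factor; but by the computation at the end of the proof of Corollary~\ref{cor:X-decomposition}, the two operations agree on $\Hom(G,k^\times)$, and the remaining twisted action of $\Hom(G,k^\times)$ on $\Hom^+(G,1+I_K)$ is precisely the one encoded in the semi-direct product notation in the statement.
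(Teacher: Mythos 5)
Your proposal is correct and follows essentially the same route as the paper, which obtains the theorem by combining Theorem~\ref{thm:hofberger-enhanced-str} with Proposition~\ref{prop:int-iso-summ} (via Corollary~\ref{cor:X-decomposition}) to replace $\uAut_k^{\Str}K$ by $\brackets{\Hom^+(G,1+I_K),\molt}$ inside the semi-direct product. Your additional remark that the outer conjugation action transports verbatim through the isomorphism $\cS$ is exactly the (implicit) justification the paper relies on.
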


Theorem~\ref{thm:main} thus provides a decomposition of $ \vAut_{(k)}^{\Str}K $ and $ \vAut_{k}^{\Str}K $ purely in terms of the valuation invariants of $ K $.
In the next section we are going to apply the results obtained so far under some further assumptions on the group $ G $ and the field $ k $. This will allow to retrieve results of Schilling~\cite{schilling44} on the field of Laurent series and of Deschamps~\cite{deschamps:puiseux} on the field of Puiseux series.

\section{Explicit examples in special cases}
\label{sec:examples}

\subsection{Finitely generated exponent group}\label{fin-gen-G:subsec}\ \\

Let $ k $ be an arbitrary field and let $ G $ be a totally ordered, finitely generated abelian group. Without loss of generality, we can assume $ G = \Z^n = \prod_{i=1}^n \Z $, for some $ n\in\N $. 
Then $ \Hom(G,k^\times)\simeq (k^\times)^n $.
Let $K\subseteq k\pow{G}$ be a Hahn field satisfying the first and canonical second lifting property
\footnote{This applies, in particular, to $ K = k(\Z^n) $. Then $ \vAut_kK $ is an interesting subgroup of $ \Aut_k K $, which is the Cremona group $ \Cr_n(k) $ (for more on the Cremona group, see \cite{deserti}).}.

Theorem~\ref{hofberger-enhanced} thus yields
\begin{theorem}\label{general-Z^n:thm}
	Let $ G= \Z^n $. Let $ k $ be a field and $ K\subseteq k\pow{G} $ a Hahn field with the first and canonical second lifting property. Then we have
	\begin{align*}
		\vAut K &\simeq (\uAut K \rtimes(k^\times)^n)\rtimes(\Aut k \times \oAut G)\\
		\vAut_k K &\simeq (\uAut_k K \rtimes(k^\times)^n)\rtimes\oAut G.
	\end{align*}
	If, moreover, $ K $ satisfies the canonical first lifting property, Proposition~\ref{hofberger-enhanced-k-stable} yields
	\begin{equation*}
		\vAut_{(k)} K \simeq (\uAut_k K \rtimes(k^\times)^n)\rtimes(\Aut k \times \oAut G).
	\end{equation*}\qed
\end{theorem}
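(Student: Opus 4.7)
The plan is to deduce this result essentially by direct specialisation of the general decomposition theorems already established, after identifying the factor $\Hom(G,k^\times)$ concretely when $G=\Z^n$.

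First I would invoke Theorem~\ref{hofberger-enhanced} applied to our hypotheses (first and canonical second lifting property), which gives
\[
\vAut K \simeq (\uAut K \rtimes \Hom(G,k^\times)) \rtimes (\Aut k \times \oAut G)
\]
and the analogous decomposition for $\vAut_k K$. So the whole content of the theorem reduces to identifying $\Hom(\Z^n,k^\times)$ with $(k^\times)^n$ as groups. This is a standard fact from the universal property of the free abelian group $\Z^n$: evaluation on the standard $\Z$-basis $(e_1,\ldots,e_n)$ yields a group isomorphism
\[
\Hom(\Z^n,k^\times) \xrightarrow{\sim} (k^\times)^n, \qquad x \longmapsto (x^{e_1},\ldots,x^{e_n}),
\]
with inverse sending a tuple $(\alpha_1,\ldots,\alpha_n)$ to the unique homomorphism determined by $e_i\mapsto\alpha_i$. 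Note that we use $(k^\times)^n$ with componentwise multiplication, which matches the pointwise multiplication on $\Hom(G,k^\times)$ from Definition~\ref{def:G-exp}. Substituting this identification into the two isomorphisms from Theorem~\ref{hofberger-enhanced} yields the first two displayed formulas.

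For the third statement, assuming in addition the canonical first lifting property, I would apply Proposition~\ref{hofberger-enhanced-k-stable}(iii), which gives
\[
\vAut_{(k)} K \simeq (\uAut_k K \rtimes \Hom(G,k^\times)) \rtimes (\Aut k \times \oAut G),
\]
and again replace $\Hom(G,k^\times)$ with $(k^\times)^n$.

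There is no real obstacle in this argument: the entire theorem is a transparent corollary of the structure theorems developed earlier, and the only step that is not a pure restatement is the identification $\Hom(\Z^n,k^\times)\simeq (k^\times)^n$, which is elementary. The only point one might want to be slightly careful about is recording that the semidirect product structures appearing in the earlier theorems are preserved by this identification, which is automatic because the isomorphism $\Hom(\Z^n,k^\times)\simeq (k^\times)^n$ is a group isomorphism and the semidirect products in question are determined by the ambient inclusions inside $\vAut K$.
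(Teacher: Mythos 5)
Your proposal is correct and is essentially identical to the paper's own argument: the paper likewise records that $\Hom(\Z^n,k^\times)\simeq(k^\times)^n$ and then states the theorem as a direct consequence of Theorem~\ref{hofberger-enhanced} and Proposition~\ref{hofberger-enhanced-k-stable}. Nothing further is needed.
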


Now we will provide a description of $ \vAut_{(k)}^{\Str}K $ and $ \vAut_k^{\Str}K $.
For $ G= \Z^n $ we have $ \Hom(G,1+I_K)\simeq (1+I_K)^n $.
More precisely, this isomorphism is given as follows. Let $ g_1,\ldots,g_n $ be generators of $ G $, let $ \bar u\in \Hom(G,1+I_K) $ and let $ u_i:= \bar u(g_i) \in 1+I_K $, for $ i=1,\ldots,n $. Then 
$$  \xi\colon\Hom(G,1+I_K)\to (1+I_K)^n ,\ \bar u\mapsto(u_1,\ldots,u_n)  $$ 
is a group isomorphism.
Under $ \xi $, a summable automorphism $ \bar u\in \Hom^{\Str}(G,1+I_K) $ (Definition~\ref{def:summable-map}) corresponds to a tuple $\xi(\bar u)= (u_1,\ldots,u_n) $ such that, for all $ a\in K $ the family 
\[
\setbr{a_g \brackets{\sum r_iu_i} t^g: r_i\in\Z,\ \sum r_iu_i=g,\ g\in\supp a } \]
is $ K $-summable. Let us denote by $ (1+I_K)^{n+}:=\xi\brackets{ \Hom^{\Str}(G,1+I_K)}$. On $ \Hom^{\Str}(G,1+I_K) $ we defined the operation $ \times $ (Definition~\ref{def:bullet}).
We can define an operation on $ (1+I_K)^{n+} $, also denoted by $ \times $, by setting
$ \mathbf{u}_1\times \mathbf{u}_2 := \xi\brackets{\xi\inv(\mathbf{u}_1)\times\xi\inv(\mathbf{u}_2)} $,
for all $ \mathbf{u}_1,\mathbf{u}_1\in (1+I_K)^{n+} $.
We thus obtain

\begin{lemma}\label{Zn-1Aut:lemma}
	$ \Hom^{\Str}(G,1+I_K)\simeq ((1+I_K)^{n+},\times) $. \qed
\end{lemma}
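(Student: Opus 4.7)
The plan is to recognise that this lemma is essentially a transport-of-structure tautology along the isomorphism $\xi$ introduced immediately before. The key observation is that all three pieces—the set bijection, the group structure on the codomain, and the matching of the two $\times$-operations—have been arranged in the preceding paragraph so that they fall out from the definitions.

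First, I would note that $\xi\colon\Hom(G,1+I_K)\to(1+I_K)^n$ is a group isomorphism under the pointwise operations (this is the standard ``evaluate on generators'' identification for homomorphisms out of a finitely generated free abelian group). Since $(1+I_K)^{n+}$ is \emph{defined} as $\xi\brackets{\Hom^{\Str}(G,1+I_K)}$, restricting $\xi$ to the summable subset gives a surjection onto $(1+I_K)^{n+}$, and injectivity of the restriction is inherited from that of $\xi$ itself. So $\xi$ already restricts to a bijection between $\Hom^{\Str}(G,1+I_K)$ and $(1+I_K)^{n+}$ as sets.

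Next, I would verify that $\xi$ intertwines the two $\times$-operations. But this is immediate from the definition of $\times$ on $(1+I_K)^{n+}$, namely $\mathbf{u}_1\times \mathbf{u}_2:=\xi\brackets{\xi\inv(\mathbf{u}_1)\times\xi\inv(\mathbf{u}_2)}$, which is precisely the requirement that $\xi(\bar u_1\times\bar u_2)=\xi(\bar u_1)\times\xi(\bar u_2)$ for all $\bar u_1,\bar u_2\in\Hom^{\Str}(G,1+I_K)$. Hence the restriction is a group homomorphism, and combined with bijectivity, a group isomorphism. Correspondingly, $((1+I_K)^{n+},\times)$ is a group (rather than merely a set with a binary operation) because its group structure is transported from $(\Hom^{\Str}(G,1+I_K),\times)$, which was established as a group in Corollary~\ref{cor:X-decomposition}.

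I do not anticipate any substantive obstacle: the lemma is pure bookkeeping, intended to set up the concrete $n$-tuple description used in the subsequent theorem. The genuinely interesting question—an intrinsic criterion for a tuple $(u_1,\ldots,u_n)\in(1+I_K)^n$ to lie in $(1+I_K)^{n+}$, i.e.\ a translation of $K$-summability into an elementary condition on the components $u_i$—is not what this lemma asserts and would have to be addressed separately (and likely depends on the specific Hahn field $K$ under consideration).
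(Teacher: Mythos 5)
Your proof is correct and matches the paper's intent exactly: the paper states this lemma with no proof (just \qed) precisely because, as you observe, $(1+I_K)^{n+}$ and its operation $\times$ are \emph{defined} by transporting $\Hom^{\Str}(G,1+I_K)$ and its twisted product along the evaluation isomorphism $\xi$, so the isomorphism is immediate by construction. Your closing remark that the lemma does not give an intrinsic criterion for membership in $(1+I_K)^{n+}$ is an accurate reading of its (deliberately limited) scope.
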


Now assume that $ G=\Z^n $ is equipped with the lexicographic order $ <_{\lex} $
\footnote{For $ g=(g_1,\ldots,g_n)\in G $ we set $ g>_{\lex}0 $ if and only if $ g\neq 0 $ and for the smallest index $ i $ such that $ g_i\neq 0 $ we have $ g_i>0 $.}.
We can explicitly describe $ \oAut G.$ 
Let $ \mathrm{UUT}_n(\Z) $ be the multiplicative group of upper uni-triangular $ n\times n $-matrices with integer coefficients.

\begin{lemma}[{\cite[Lemma 1]{conrad}}\footnote{\cite{conrad} uses upper triangular matrices because he takes the anti-lexicographic ordering on $ G $.}]
	\label{Zn-lex:lemma}
	Let $ G = (\Z^n,<_{\lex}) $. Then $ \oAut G \simeq \mathrm{UUT}_n(\Z) $.\qed
\end{lemma}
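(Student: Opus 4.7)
The plan is to represent $\tau\in\oAut G$ as a matrix with respect to the standard basis $e_1,\dots,e_n$ of $\Z^n$ and to show that order preservation forces this matrix to be upper uni-triangular, then to check that every matrix in $\mathrm{UUT}_n(\Z)$ arises in this way and that the correspondence is a group homomorphism (up to the usual row/column convention that produces an upper, rather than lower, triangular matrix).

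First I would identify the convex subgroups of $(\Z^n,<_{\lex})$. For each $j\in\{1,\dots,n+1\}$, set
\[
H_j := \{g=(g_1,\dots,g_n)\in\Z^n : g_1=\cdots=g_{j-1}=0\},
\]
so that $0 = H_{n+1}\subsetneq H_n\subsetneq \cdots\subsetneq H_1=G$. A short computation using the definition of $<_{\lex}$ shows each $H_j$ is convex, and since the chain of convex subgroups is totally ordered by inclusion, these are all the convex subgroups. Because convex subgroups are definable purely from the ordered group structure, any $\tau\in\oAut G$ permutes the $H_j$; the chain condition forces $\tau(H_j)=H_j$ for every $j$. In particular, $\tau(e_j)\in H_j\setminus H_{j+1}$, so the first nonzero coordinate of $\tau(e_j)$ occurs exactly at position $j$.

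Next I would define $\phi(\tau)$ to be the $n\times n$ integer matrix whose $j$-th row records the coordinates of $\tau(e_j)$ (the row convention is chosen precisely so that the triangularity comes out \emph{upper} rather than lower). The previous paragraph shows $\phi(\tau)_{ji}=0$ for $i<j$, so $\phi(\tau)$ is upper triangular. Since $\tau$ is a group automorphism of $\Z^n$ we have $\phi(\tau)\in\mathrm{GL}_n(\Z)$, hence $\det\phi(\tau)=\prod_j\phi(\tau)_{jj}=\pm 1$, forcing every diagonal entry to be $\pm 1$. Order preservation then applies: since $e_j>_{\lex}0$, its image $\tau(e_j)$ has positive leading coordinate, so $\phi(\tau)_{jj}=+1$. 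Thus $\phi(\tau)\in\mathrm{UUT}_n(\Z)$.

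Finally I would check that $\phi$ is a group isomorphism. Injectivity is immediate since $\tau$ is determined by its values on the basis $e_1,\dots,e_n$. For surjectivity, given any $M\in\mathrm{UUT}_n(\Z)$ define $\tau_M$ by $\tau_M(e_j)=\sum_i M_{ji}e_i$ and extend $\Z$-linearly. Then $\tau_M$ is a group automorphism (its inverse is $\tau_{M^{-1}}$, and $M^{-1}\in\mathrm{UUT}_n(\Z)$ as well), and for any $0<_{\lex}g\in\Z^n$ whose first nonzero coordinate is $g_j>0$ at position $j$, upper uni-triangularity shows that the first nonzero coordinate of $\tau_M(g)$ also sits at position $j$ and equals $g_j>0$, so $\tau_M\in\oAut G$. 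Compatibility with composition (i.e.\ $\phi(\sigma\tau)=\phi(\tau)\phi(\sigma)$ or, equivalently, after transposing the convention, $\phi(\sigma\tau)=\phi(\sigma)\phi(\tau)$) is a direct calculation on the basis vectors.

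The only delicate step is the triangularity argument; the rest is bookkeeping. The cleanest way to extract it is through convex subgroups, as above, since this avoids any case analysis on archimedean classes and makes transparent why the $(H_j)_j$ are individually fixed by $\tau$ (rather than merely permuted).
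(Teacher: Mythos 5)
Your proposal is correct. Note that the paper does not actually prove this lemma: it is quoted verbatim from Conrad's work (with only a footnote about the lexicographic versus anti-lexicographic convention), so there is no in-paper argument to compare against. Your self-contained proof via the chain of convex subgroups $0=H_{n+1}\subsetneq H_n\subsetneq\cdots\subsetneq H_1=G$ is the standard route and all the key steps are sound: the $H_j$ are exactly the convex subgroups, an order-preserving automorphism maps convex subgroups to convex subgroups and hence fixes each $H_j$ setwise (the chain being finite and totally ordered), which pins the leading coordinate of $\tau(e_j)$ to position $j$; the determinant argument (or, alternatively, the induced automorphism of $H_j/H_{j+1}\simeq\Z$ together with positivity) forces the diagonal entries to be $+1$; and surjectivity follows since $\mathrm{UUT}_n(\Z)$ is closed under inversion and upper uni-triangularity visibly preserves the position and sign of the leading coordinate. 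One small point to tidy up: with your row convention the map $\phi$ satisfies $\phi(\sigma\tau)=\phi(\tau)\phi(\sigma)$, i.e.\ it is an anti-homomorphism, and transposing does not fix this (it would land you in lower uni-triangular matrices). The clean repair is to take $\tau\mapsto\phi(\tau)^{-1}$, or simply to observe that every group is isomorphic to its opposite via inversion, so the anti-isomorphism already yields $\oAut G\simeq\mathrm{UUT}_n(\Z)$. This is pure bookkeeping and does not affect the validity of the argument.
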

%

Now Lemmas~\ref{Zn-1Aut:lemma} and~\ref{Zn-lex:lemma} applied to Theorem~\ref{general-Z^n:thm} provide the following refinement of 
Theorem~\ref{thm:main}.
\begin{theorem}\label{srt-add-Z^n:thm}
	Let $ G= (\Z^n,<_{\lex}) $. Let $ k $ be a field and $ K\subseteq k\pow{G} $ a Hahn field with the first and canonical second lifting property. Then we have
	\begin{align*}
		\vAut_{(k)}^{\Str} K &\simeq (((1+I_K)^{n+},\times) \rtimes(k^\times)^n)\rtimes(\Aut k \times \mathrm{UUT}_n(\Z))\\
		\vAut_k^{\Str} K &\simeq (((1+I_K)^{n+},\times) \rtimes(k^\times)^n)\rtimes\mathrm{UUT}_n(\Z).
	\end{align*}\qed
\end{theorem}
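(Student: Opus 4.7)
The plan is to assemble this theorem directly by substituting, into the general decomposition provided by Theorem~\ref{thm:main}, the three concrete identifications available when $G=(\Z^n,<_{\lex})$. No new machinery is needed beyond what has already been established; the main point is to verify that each substitution is compatible with the relevant semi-direct product action.

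First, since $K$ has the first and canonical second lifting property and $G = \Z^n$, Theorem~\ref{thm:main} gives
\[
\vAut_{(k)}^{\Str}K \simeq \bigl(\Hom^+(G,1+I_K)\rtimes \Hom(G,k^\times)\bigr)\rtimes\bigl(\Aut k\times \oAut G\bigr)
\]
and the analogous statement for $\vAut_k^{\Str}K$ without the $\Aut k$ factor. Next, I would replace each of the three valuation invariants by its concrete form: Lemma~\ref{Zn-1Aut:lemma} gives $(\Hom^+(G,1+I_K),\times)\simeq((1+I_K)^{n+},\times)$ via the evaluation map $\xi$ on the standard generators $g_1,\dots,g_n$; the same evaluation yields $\Hom(G,k^\times)\simeq(k^\times)^n$ (this is the classical universal property of $\Z^n$, specialised to $k^\times$); and Lemma~\ref{Zn-lex:lemma} gives $\oAut G\simeq \mathrm{UUT}_n(\Z)$.

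The only non-routine check is that the semi-direct product actions factor through these isomorphisms. For the innermost semi-direct factor, the action of $\Hom(G,k^\times)$ on $\Hom^+(G,1+I_K)$ comes from the operation $\molt$ of Definition~\ref{def:bullet}: under $\cS$ it corresponds to the conjugation action of $\gexp{G}K$ on $\uAut^{\Str}_k K$ coming from Lemma~\ref{int-str-lin:lemma}, and since $\xi$ was defined by evaluation on the $g_i$, the action of an $n$-tuple in $(k^\times)^n$ on $(1+I_K)^{n+}$ is exactly the one transported by $\xi$ from $\molt$. For the outer semi-direct factor, the action of $\Aut k \times \oAut G$ on the internal group is the one given by Theorem~\ref{hofberger-enhanced} and restricted to strongly additive automorphisms in Theorem~\ref{thm:hofberger-enhanced-str}. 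Under the identification $\oAut G \simeq \mathrm{UUT}_n(\Z)$, a matrix $M$ acts on $(k^\times)^n$ and on $(1+I_K)^{n+}$ via the corresponding permutation of the generators $g_1,\dots,g_n$, which is precisely what one obtains by transporting the $\oAut G$-action along $\xi$; the $\Aut k$ factor acts coefficient-wise and commutes with $\oAut G$, which is consistent with the direct product $\Aut k\times\oAut G$ appearing in the decomposition.

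The main obstacle, such as it is, lies in this final bookkeeping: each of the three isomorphisms is well-behaved on its own, but one must ensure that the nested semi-direct structure $\bigl((1+I_K)^{n+}\rtimes(k^\times)^n\bigr)\rtimes(\Aut k\times\mathrm{UUT}_n(\Z))$ does arise from the corresponding structure in Theorem~\ref{thm:main} under the product isomorphism. Once that is confirmed, the statement for $\vAut_{(k)}^{\Str}K$ follows, and the statement for $\vAut_k^{\Str}K$ is obtained by simply dropping the $\Aut k$ factor, exactly as in Theorem~\ref{thm:main}.
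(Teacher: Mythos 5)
Your proposal is correct and follows essentially the same route as the paper, which likewise obtains the theorem by substituting Lemma~\ref{Zn-1Aut:lemma}, Lemma~\ref{Zn-lex:lemma} and the identification $\Hom(\Z^n,k^\times)\simeq(k^\times)^n$ into Theorem~\ref{thm:main} (via Theorem~\ref{general-Z^n:thm}); your extra care about transporting the semi-direct product actions along $\xi$ is a welcome addition that the paper leaves implicit. One small wording slip: a matrix in $\mathrm{UUT}_n(\Z)$ acts on the generators $g_1,\dots,g_n$ by the corresponding integral linear transformation, not by a permutation, but this does not affect the argument.
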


In the next two sections we investigate in more detail the case $ G=\Z $ and provide a more explicit description of the automorphism groups of the field $ \bbL = k\pow{Z} $ of Laurent series and of the function field $ k(\Z) $.

\subsection{Laurent series}
\label{subsect-laurent-series}
Let $k$ be a field and let $\bbL:= k\pow{\Z}$ be the field of formal Laurent series with coefficients in $k$.
This is a maximal Hahn field, thus it has the canonical first and second lifting properties.
On this field the valuation $v$ has residue field $k$ and value group $\Z$.
In \cite{schilling44} Schilling studies the group $ \vAut_k \bbL $ of $ k $-automorphisms of $ \bbL $.
In this section we prove Theorem~\ref{thm:laurent-main}, which is both a generalisation and a refinement of Schilling's result. We also provide a refinement in order to describe the group $\oAut \bbL $, in the case of $ k $ an ordered field (Corollary~\ref{prop:schilling-order}).

We recall that, by Remark~\ref{not:general-KvG-2}, the group of units is $ U:=U_\bbL\simeq (1+I_\bbL)\times k^\times $.

%

\begin{lemma}\label{Hom(Z,U)-is-summable:lemma}
	We have	$ \Hom(\Z,U) = \Hom^{\Str}(\Z,U) $.
\end{lemma}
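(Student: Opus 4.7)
The inclusion $\Hom^{\Str}(\Z, U) \subseteq \Hom(\Z, U)$ is immediate from the definition, so the task is to prove the reverse inclusion: every $u \in \Hom(\Z, U)$ is $\bbL$-summable. The plan is to exploit two crucial features of $\bbL = k\pow{\Z}$: first, that elements of $U \subseteq R_\bbL$ have support contained in $\Z_{\geq 0}$; and second, that well-ordered subsets of $\Z$ are exactly the subsets bounded below.

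Fix $u \in \Hom(\Z, U)$ and an element $a = \sum_{g \in \Z} a_g t^g \in \bbL$. By Definition~\ref{def:summable-map} and Definition~\ref{def-strongly-linear}, I need to check that the family $A := \{a_g u(g) t^g : g \in \supp a\}$ satisfies conditions (a) and (b) of Definition~\ref{def-strongly-linear}. The observation that drives everything is that $u(g) \in U$ for all $g \in \Z$, hence $v(u(g)) = 0$, so $\supp u(g) \subseteq \Z_{\geq 0}$. Consequently
\[
\supp\bigl(a_g u(g) t^g\bigr) \subseteq g + \Z_{\geq 0} = [g, \infty) \cap \Z
\]
for every $g \in \supp a$.

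Let $g_0 := \min \supp a$ (which exists since $\supp a$ is well-ordered). From the containment above, $\Supp A \subseteq [g_0, \infty) \cap \Z$, and any subset of $\Z$ bounded below is well-ordered, proving condition~(a). For condition~(b), fix $h \in \Supp A$. If $h \in \supp(a_g u(g) t^g)$ then necessarily $g \leq h$ (because $\supp(a_g u(g) t^g) \subseteq [g, \infty)$), while also $g \in \supp a \subseteq [g_0, \infty)$. Hence the set of such $g$ is contained in $\supp a \cap [g_0, h]$, which is finite since $[g_0, h]$ is a finite subset of $\Z$.

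Thus both conditions are satisfied and $u$ is $\bbL$-summable, completing the proof. I do not foresee a real obstacle here: the argument is essentially the observation that in $\Z$, well-ordered and bounded-below coincide, and that $U$-valued homomorphisms automatically produce non-negative support. The statement would fail in general Hahn fields precisely because these two conveniences of $\Z$ do not survive passage to arbitrary ordered abelian groups (e.g.\ for non-discrete or higher-rank $G$, a family indexed by $\supp a$ with supports accumulating above each $g$ can violate either well-orderedness or the finiteness condition).
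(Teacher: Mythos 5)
Your proof is correct, but it takes a different route from the paper. The paper's argument is a one-liner: writing $u=\bar u(1)$, the family in question is $\{a_nu^nt^n:n\in\supp a\}$, and its summability is obtained by citing Neumann's Lemma. You instead give a direct, self-contained verification of conditions (a) and (b) of Definition~\ref{def-strongly-linear}, resting on two facts special to $\bbL=k\pow{\Z}$: units have support in $\Z^{\geq 0}$, so $\supp(a_gu(g)t^g)\subseteq[g,\infty)$, and subsets of $\Z$ bounded below are automatically well ordered, which also forces each $S_h$ to sit inside the finite interval $[g_0,h]$. Your approach buys elementarity and transparency --- it makes visible exactly which features of $\Z$ are being used and why the statement fails for general $G$, as you note at the end --- whereas the paper's appeal to Neumann's Lemma is shorter and is the argument that would generalise to settings where discreteness is unavailable. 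One small point to make explicit: Definition~\ref{def:summable-map} asks for $K$-summability, i.e.\ summability \emph{plus} membership of the sum in $K$; you verify summability, and the second half is automatic here only because $\bbL$ is the maximal Hahn field, so a sentence saying so would close the argument cleanly.
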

\begin{proof}
	Let $ \bar u\in\Hom(\Z,U) $, $ u=\bar u (1) $ and $ a\in\bbL $. By Neumann's Lemma~\cite[p.~57]{priess-crampe} the family $ \setbr{a_nu^nt^n:n\in\supp a} $ is $ \bbL $-summable. So $ u\in\Hom^{\Str}(\Z,U) $.
\end{proof}

By Lemma~\ref{Hom(Z,U)-is-summable:lemma} we can use the group structure $ (\Hom(\Z,U),\times) $ described in Definition~\ref{def:bullet} to induce an alternative group structure on $ U $. Call $ \theta\colon\Hom(\Z,U)\to U $ the isomorphism given by $ \theta(\bar u) = u:=\bar u(1) $. Set, for all $ u_1,u_2\in U $
\begin{equation}\label{eq:times-s}
	u_1\times_{s}u_2 = \theta(\theta\inv(\bar u_1)\times\theta\inv(\bar u_2)).
\end{equation}

\begin{lemma}\label{Hom(Z,U)isoU}
	We have
	\begin{align}
		\brackets{\Hom(\Z,U),\times}&\simeq (U,\times_s)\label{Hom(Z,U)isoU-1}\\
		\brackets{\Hom(\Z,k^\times),\times}&\simeq\brackets{\Hom(\Z,k^\times), \cdot}\simeq (k^\times,\cdot)\label{Hom(Z,U)isoU-2}\\
		\brackets{\Hom(\Z,1+I_\bbL),\times}&\simeq (1+I_\bbL,\times_{s})\label{Hom(Z,U)isoU-3}
	\end{align}
	and thus
	\begin{equation}\label{Hom(Z,U)isoU-4}
		\brackets{\Hom(\Z,U),\times}\simeq (1+I_\bbL,\times_{s}) \rtimes (k^\times,\cdot).
	\end{equation}
\end{lemma}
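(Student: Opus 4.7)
The plan is to deduce all four isomorphisms from the general decomposition in Corollary~\ref{cor:X-decomposition} (applied to $G=\Z$, $K=\bbL$) together with the fact, established in Lemma~\ref{Hom(Z,U)-is-summable:lemma}, that summability is automatic here. None of the steps should require any genuinely new computation.

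First I would deal with \eqref{Hom(Z,U)isoU-1}. Since $\Z$ is free abelian of rank $1$, the evaluation map $\theta\colon \Hom(\Z,U)\to U$, $\bar u\mapsto \bar u(1)$, is a set bijection. The operation $\times_s$ on $U$ was defined in \eqref{eq:times-s} precisely to transport the operation $\times$ through $\theta$, so $\theta$ is a group isomorphism by construction; no further verification is needed.

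Next I would handle \eqref{Hom(Z,U)isoU-2} and \eqref{Hom(Z,U)isoU-3} by restricting $\theta$. For \eqref{Hom(Z,U)isoU-2}, restrict $\theta$ to $\Hom(\Z,k^\times)\subseteq \Hom(\Z,U)$ to obtain a bijection onto $k^\times$. The key observation—already recorded inside the proof of \eqref{eq:S-on-Gexp} in Corollary~\ref{cor:X-decomposition}—is that for $x,y\in\Hom(G,k^\times)$ the corresponding $\sigma_x\in\gexp{G}K$ is a $k$-automorphism, so $\sigma_x(y(g))=y(g)$ and therefore $(x\times y)(g)=x(g)y(g)$. Thus $(\Hom(\Z,k^\times),\times)=(\Hom(\Z,k^\times),\cdot)$, and the pointwise product transports via $\theta$ to ordinary multiplication on $k^\times$. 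For \eqref{Hom(Z,U)isoU-3}, one only needs to check that $\Hom(\Z,1+I_\bbL)$ is a subgroup of $(\Hom(\Z,U),\times)$: given $\bar u_\tau,\bar u_\sigma\in\Hom(\Z,1+I_\bbL)$, the element $\tau(\bar u_\sigma(g))\bar u_\tau(g)$ lies in $1+I_\bbL$ because $\tau\in\vAut \bbL$ preserves $1+I_\bbL$ (Remark~\ref{rmk:val-pres-induces-ord}). Then $\theta$ restricts to a group isomorphism onto $(1+I_\bbL,\times_s)$ by the very definition of $\times_s$ via restriction.

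Finally, for the semidirect product \eqref{Hom(Z,U)isoU-4}, apply \eqref{eq:isom-on-X} from Corollary~\ref{cor:X-decomposition} with $G=\Z$ and $K=\bbL$. By Lemma~\ref{Hom(Z,U)-is-summable:lemma} we have $\Hom^+(\Z,U)=\Hom(\Z,U)$ and, in particular, $\Hom^+(\Z,1+I_\bbL)=\Hom(\Z,1+I_\bbL)$, so \eqref{eq:isom-on-X} becomes
\[
(\Hom(\Z,U),\times)\simeq(\Hom(\Z,1+I_\bbL),\times)\rtimes(\Hom(\Z,k^\times),\cdot).
\]
Substituting the isomorphisms \eqref{Hom(Z,U)isoU-2} and \eqref{Hom(Z,U)isoU-3} into the right-hand side yields \eqref{Hom(Z,U)isoU-4}. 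There is no real obstacle here; the only point that requires attention is ensuring that the twisted product $\times$ actually reduces to pointwise multiplication on the $k^\times$-factor, but this is exactly the content of the computation already carried out in the proof of \eqref{eq:S-on-Gexp}.
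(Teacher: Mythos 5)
Your proposal is correct and follows essentially the same route as the paper: \eqref{Hom(Z,U)isoU-1} is immediate from the definition of $\times_s$ in \eqref{eq:times-s}, while \eqref{Hom(Z,U)isoU-2}--\eqref{Hom(Z,U)isoU-4} are obtained as special cases of Corollary~\ref{cor:X-decomposition}, with Lemma~\ref{Hom(Z,U)-is-summable:lemma} removing the summability restriction. The extra details you supply (the evaluation map $\theta$, closure of $\Hom(\Z,1+I_\bbL)$ under $\times$, and the reduction of $\times$ to pointwise multiplication on the $k^\times$-factor) are exactly the content the paper leaves implicit.
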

\begin{proof}
	Equation~\eqref{Hom(Z,U)isoU-1} follows immediately from \eqref{eq:times-s}. Equations~\eqref{Hom(Z,U)isoU-2}, \eqref{Hom(Z,U)isoU-3} and~\eqref{Hom(Z,U)isoU-4} are now  special cases of Corollary~\ref{cor:X-decomposition}.
\end{proof}

Next we show that all automorphisms of $ \bbL $ are strongly additive.
\begin{lemma}
	\label{laurent-str-lin}
	We have
	$ \vAut\bbL = \vAut^{\Str}\bbL $.
\end{lemma}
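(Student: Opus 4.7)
The inclusion $\vAut^{\Str}\bbL\subseteq \vAut\bbL$ is immediate from the definition, so the plan is to establish the reverse inclusion. The key structural feature of $\bbL=k\pow{\Z}$ is that $\oAut\Z=\{\id_\Z\}$, so every $\sigma\in\vAut\bbL$ induces the identity on the value group; consequently $v\circ\sigma=v$ exactly (not merely up to an order automorphism). Moreover, every well-ordered subset of $\Z$ is bounded below, so each $a\in\bbL$ has the form $a=\sum_{n\ge N}a_nt^n$ for some $N\in\Z$.

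Fix $\sigma\in\vAut\bbL$ and such an $a$. First I would verify that the family $A:=\{\sigma(a_nt^n):n\ge N\}$ is $\bbL$-summable. For every $n$ with $a_n\ne 0$, we have $v(a_nt^n)=n$, hence $v(\sigma(a_nt^n))=n$, so $\supp \sigma(a_nt^n)\subseteq \Z^{\ge n}$. Therefore $\Supp A\subseteq \Z^{\ge N}$ is well-ordered, and for every $m\ge N$ the index set $S_m=\{n:m\in\supp \sigma(a_nt^n)\}$ is contained in $\{N,N+1,\dots,m\}$ and hence finite. Both conditions of Definition~\ref{def-strongly-linear} are satisfied, so the sum $b:=\sum_{n\ge N}\sigma(a_nt^n)$ is a well-defined element of $\bbL$.

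Next I would show $\sigma(a)=b$ by a finite-truncation argument, comparing coefficients one degree at a time. For each $m\ge N$ write $a=p_m+r_m$ with $p_m=\sum_{n=N}^{m}a_nt^n$ a finite sum and $v(r_m)\ge m+1$. By additivity of $\sigma$ on finite sums, $\sigma(p_m)=\sum_{n=N}^{m}\sigma(a_nt^n)$, and by valuation preservation $v(\sigma(r_m))\ge m+1$, so
\[
\sigma(a)_m=\sigma(p_m)_m=\sum_{n=N}^{m}\sigma(a_nt^n)_m.
\]
On the other hand, in the sum $b$ the terms $\sigma(a_nt^n)$ with $n>m$ contribute only to coefficients of index $\ge m+1$, so $b_m=\sum_{n=N}^{m}\sigma(a_nt^n)_m$ as well. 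Hence $\sigma(a)_m=b_m$ for every $m$, giving $\sigma(a)=b$ and proving that $\sigma$ is $\bbL$-summable.

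Finally, since $\sigma\inv\in\vAut\bbL$ as well, the same argument applied to $\sigma\inv$ shows that $\sigma\inv$ is $\bbL$-summable, so $\sigma\in\vAut^{\Str}\bbL$. There is no real obstacle in the argument; the discreteness of $\Z$ (well-ordered supports are bounded below, and truncation is available) is doing all the work, and it is precisely this feature which fails for a denser value group such as $\Q$ (see Example~\ref{ex-non-strong}).
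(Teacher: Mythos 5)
Your proof is correct and follows essentially the same route as the paper's: both exploit that $\oAut\Z$ is trivial so $v\circ\sigma=v$, and then use a truncation at level $m$ (resp.\ $n$) to show the difference between $\sigma(a)$ and $\sum\sigma(a_nt^n)$ has arbitrarily large value, hence vanishes. Your version is slightly more careful in explicitly verifying the summability of the family $\{\sigma(a_nt^n)\}$ and in noting that $\sigma\inv$ must be treated as well, both of which the paper leaves implicit.
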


\begin{proof}
	Let $ \sigma \in \vAut\bbL$. By Lemma~\ref{Zn-lex:lemma} with $ n=1 $ it follows that $ \oAut\Z $ is trivial so for all $ a\in \bbL $ we have $ v(a)=v(\sigma(a)) $. 
	Now let $ a=\sum_{i=m}^\infty a_it^i\in\bbL $ with $ m=v(a) $.
	Then the family $ \{ \sigma(a_it^i):i\in\supp a \} $ is summable and, for all $ n\in \Z $ we have
	{\footnotesize \begin{align*}
			v\brackets{\sigma(a)-\sum\sigma\brackets{a_it^i}}
			&=v\brackets{\sigma\brackets{\sum_{i=m}^\infty a_it^i} - \sum_{i=m}^\infty \sigma\brackets{a_it^i}}\\
			&= v\brackets{\sigma\brackets{\sum_{i=m}^n a_it^i}+
				\sigma\brackets{\sum_{i>n}a_it^i} 
				- \sum_{i=m}^n \sigma\brackets{a_it^i}
				+\sum_{>n}\sigma\brackets{a_it^i}}\\
			&= v\brackets{\sum_{i=m}^n \sigma\brackets{a_it^i}+
				\sigma\brackets{\sum_{i>n}a_it^i} 
				- \sum_{i=m}^n \sigma\brackets{a_it^i}
				+\sum_{i>n}\sigma\brackets{a_it^i}}\\
			&= v\brackets{\sigma\brackets{\sum_{i>n}a_it^i}
				+\sum_{i>n}\sigma\brackets{a_it^i}}>n
	\end{align*}}
	hence $ v\brackets{\sigma(a)-\sum\sigma\brackets{a_it^i}}=\infty $ which implies $ \sigma(a)=\sum\sigma\brackets{a_it^i} $.
\end{proof}

The following theorem is now a consequence of Theorem~\ref{srt-add-Z^n:thm} and Lemmas~\ref{Hom(Z,U)isoU} and~\ref{laurent-str-lin}.

\begin{theorem}\label{thm:laurent-main}
	We have
	\[
	\vAut_{(k)}\bbL \simeq ((1+I_\bbL,\times_{s})\rtimes (k^\times,\cdot))\rtimes\Aut k.
	\]	\qed
\end{theorem}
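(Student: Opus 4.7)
The plan is to derive Theorem~\ref{thm:laurent-main} by specialising Theorem~\ref{srt-add-Z^n:thm} to the case $n=1$ and then applying the identifications provided by Lemmas~\ref{Hom(Z,U)-is-summable:lemma}, \ref{Hom(Z,U)isoU} and~\ref{laurent-str-lin}. Since $\bbL = k\pow{\Z}$ is a maximal Hahn field, it satisfies the (canonical) first and second lifting properties (see Example~\ref{can-sec-LP:examples}(i) and Example~\ref{canonical-puiseux-kappa} combined with the fact that the maximal Hahn field is $\kappa$-bounded for any sufficiently large $\kappa$). Thus the hypotheses of Theorem~\ref{srt-add-Z^n:thm} are satisfied with $G = \Z = (\Z^1,<_{\lex})$.

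With $n=1$, Theorem~\ref{srt-add-Z^n:thm} gives
\[
\vAut_{(k)}^{\Str}\bbL \simeq \bigl(((1+I_\bbL)^{1+},\times) \rtimes k^\times\bigr)\rtimes \bigl(\Aut k \times \mathrm{UUT}_1(\Z)\bigr).
\]
The group $\mathrm{UUT}_1(\Z)$ of $1\times 1$ upper uni-triangular integer matrices is trivial, so the factor $\oAut\Z$ disappears. Next, Lemma~\ref{Hom(Z,U)-is-summable:lemma} asserts that $\Hom(\Z,U) = \Hom^{\Str}(\Z,U)$ (every homomorphism is automatically $\bbL$-summable, by Neumann's Lemma), and hence a fortiori $\Hom^{\Str}(\Z,1+I_\bbL) = \Hom(\Z,1+I_\bbL)$. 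Under the isomorphism $\xi$ of Lemma~\ref{Zn-1Aut:lemma}, this identifies $(1+I_\bbL)^{1+}$ with the whole of $1+I_\bbL$, and by \eqref{Hom(Z,U)isoU-3} of Lemma~\ref{Hom(Z,U)isoU} the induced group operation on $1+I_\bbL$ is precisely $\times_s$.

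Finally, Lemma~\ref{laurent-str-lin} shows $\vAut\bbL = \vAut^{\Str}\bbL$, so in particular $\vAut_{(k)}\bbL = \vAut_{(k)}^{\Str}\bbL$. Substituting the identifications above yields
\[
\vAut_{(k)}\bbL \simeq \bigl((1+I_\bbL,\times_s)\rtimes (k^\times,\cdot)\bigr)\rtimes \Aut k,
\]
which is exactly the claimed decomposition.

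There is essentially no genuine obstacle: the argument is a bookkeeping exercise assembling three prior results. The only point requiring a moment of care is confirming that the semi-direct product actions transport correctly under $\xi$ and $\theta$, i.e.\ that the twisted operation $\times$ on $\Hom^{\Str}(\Z,1+I_\bbL)$ is transferred to $\times_s$ on $1+I_\bbL$ in a way compatible with the conjugation action of $k^\times$ (resp.\ $\Aut k$); but this compatibility is precisely what Lemma~\ref{Hom(Z,U)isoU} records, so no further work is needed.
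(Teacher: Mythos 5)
Your proposal is correct and follows essentially the same route as the paper, which derives Theorem~\ref{thm:laurent-main} directly from Theorem~\ref{srt-add-Z^n:thm} (with $n=1$, so that $\mathrm{UUT}_1(\Z)$ is trivial) together with Lemmas~\ref{Hom(Z,U)isoU} and~\ref{laurent-str-lin}. Your additional remarks on why $\bbL$ has the lifting properties and on transporting the operation $\times$ to $\times_s$ via $\xi$ and $\theta$ only make explicit what the paper leaves implicit.
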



\begin{rmk}\label{laurent-formulas:rmk}
	We can now show explicitly how an automorphism $ \sigma\in\vAut_{(k)}K $ acts. Let $ a = \sum_{i\geq m}a_it^i\in\bbL $. We know that $ \sigma $ is strongly additive, so $ \sigma(a) = \sum \sigma(a_i)\sigma(t)^i $. For all $ i\in\supp a $ we have $ \sigma(a_i)\in k $. Moreover, because $ v(t)=v(\sigma(t))=1 $ we have $ u_\sigma:=t\inv\sigma(t)\in U $. Then $ \sigma $ is uniquely determined by $ u_\sigma $ and $ \sigma|_k $:
	\[
	\sigma\brackets{\sum_{i\geq m}a_it^i} = \sum_{i\geq m}\sigma|_k(a_i)(u_\sigma t)^i.
	\]
	Conversely, to every unit $ u\in U $ and every $ \tau\in\Aut k $ we have the corresponding $ \sigma_{u,\tau}\in\vAut_{(k)}\bbL $ defined by
	\[
	\sigma_{u,\tau}\brackets{\sum_{i\geq m}a_it^i} = \sum_{i\geq m}\tau(a_i)(u t)^i.
	\]\qed
\end{rmk}

\begin{cor}\label{schilling:cor}
	We have
	$ \vAut_{k}\bbL \simeq (U,\times_{s})\simeq (1+I_\bbL,\times_{s})\rtimes (k^\times,\cdot) $.	\qed
\end{cor}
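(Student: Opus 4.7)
The plan is to specialise the general decomposition machinery developed earlier in the paper to the case $G=\Z$, exploiting two features of this case: $\oAut \Z$ is trivial, and every homomorphism from $\Z$ to $U$ is automatically $\bbL$-summable. Since $\bbL$ is a maximal Hahn field, it satisfies both the first and canonical second lifting property, so all the structure theorems apply.

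First I would note that by Lemma~\ref{Zn-lex:lemma} applied with $n=1$, we have $\oAut \Z \simeq \mathrm{UUT}_1(\Z) = \{\id\}$. Consequently, by Corollary~\ref{prop:external-direct} equation~\eqref{eq:external-direct-k}, $\Ext\Aut_k \bbL \simeq \oAut \Z$ is trivial, so the Hofberger decomposition~\eqref{eq:hofberger-k} collapses to $\vAut_k \bbL = \Int\Aut_k \bbL$. Combining this with Lemma~\ref{laurent-str-lin}, which states $\vAut \bbL = \vAut^{\Str}\bbL$, yields
\[
\vAut_k \bbL \;=\; \Int\Aut_k \bbL \;=\; \Int\Aut_k^{\Str} \bbL.
\]

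Next I would invoke Proposition~\ref{prop:int-iso-summ} to identify $(\Int\Aut_k^{\Str} \bbL, \circ)$ with $(\Hom^+(\Z, U), \molt)$ via the isomorphism $\cS$ of~\eqref{eq:cS-isom}. By Lemma~\ref{Hom(Z,U)-is-summable:lemma} we have $\Hom^+(\Z, U) = \Hom(\Z, U)$, so this step delivers $\vAut_k \bbL \simeq (\Hom(\Z, U), \molt)$.

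Finally I would apply Lemma~\ref{Hom(Z,U)isoU}: the isomorphism~\eqref{Hom(Z,U)isoU-1} gives $(\Hom(\Z, U), \molt) \simeq (U, \times_s)$, and the isomorphism~\eqref{Hom(Z,U)isoU-4} gives the refinement $(\Hom(\Z, U), \molt) \simeq (1+I_\bbL, \times_s) \rtimes (k^\times, \cdot)$. Chaining these isomorphisms produces the claimed two descriptions of $\vAut_k \bbL$.

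There is no genuine obstacle: once the two specific features of $G=\Z$ (triviality of $\oAut\Z$ and automatic summability) are recorded, the corollary is a direct specialisation of Proposition~\ref{prop:int-iso-summ} and Lemma~\ref{Hom(Z,U)isoU}. The only point meriting a line of verification is that the twisted operation $\molt$ transported from $\Hom(\Z, U)$ via $\theta$ coincides with the operation $\times_s$ on $U$, but this is exactly the content of definition~\eqref{eq:times-s} and Lemma~\ref{Hom(Z,U)isoU}, so no additional computation is required.
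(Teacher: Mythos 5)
Your proof is correct and follows essentially the same route as the paper: the corollary is obtained there by specialising Theorem~\ref{srt-add-Z^n:thm} (equivalently, restricting Theorem~\ref{thm:laurent-main} to $k$-automorphisms) together with Lemmas~\ref{Hom(Z,U)isoU} and~\ref{laurent-str-lin}, which packages exactly the ingredients you assemble by hand — triviality of $\oAut\Z$, strong additivity of all elements of $\vAut\bbL$, automatic summability of $\Hom(\Z,U)$, and the isomorphism $\cS$ of Proposition~\ref{prop:int-iso-summ}.
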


With Corollary~\ref{schilling:cor} we retrieve Schilling's result \cite[Theorem 1]{schilling44}.
To conclude this subsection we sharpen Theorem~\ref{thm:laurent-main} in the case where $k$ is an ordered field, to characterise the group $ \oAut_k\bbL $ of order preserving $ k $-automorphisms of $ \bbL $ (Definition~\ref{def:k,k-stab,v-,o-Aut}).
\begin{cor}\label{prop:schilling-order}
	The $ k $-automorphisms preserving the lexicographic order on $\bbL$ are exactly those corresponding to positive units: $ \oAut_k\bbL \simeq (U^{>0},\times_{s}) $.
	More precisely, we have
	\begin{equation}\label{eq:laurent-semi-direct-ordered}
		\oAut_{(k)}\bbL\simeq \big(\left(1+I_\bbL,\times_{s}\right) \rtimes k^{>0}\big) \rtimes\oAut k.
	\end{equation}
\end{cor}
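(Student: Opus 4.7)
The plan is to refine Theorem~\ref{thm:laurent-main} by determining which pairs $(u,\tau)\in U\times\Aut k$ yield order-preserving automorphisms under the parametrisation $\sigma=\sigma_{u,\tau}$ of Remark~\ref{laurent-formulas:rmk}.

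First I would justify that every element of $\oAut_{(k)}\bbL$ is valuation-preserving, so that Theorem~\ref{thm:laurent-main} applies. This follows because $R_\bbL$ coincides with the convex hull of $k$ inside $(\bbL,<)$: if $a\in R_\bbL$ then $a-a_0\in I_\bbL$ is infinitesimal, giving $|a|\le |a_0|+1$; if $v(a)<0$ then the leading monomial of $a$ already exceeds every element of $k$ in absolute value. Any $\sigma\in\oAut_{(k)}\bbL$ therefore preserves $R_\bbL$, and by Remark~\ref{rmk:val-pres-induces-ord} it lies in $\vAut_{(k)}\bbL$.

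Next I would compute the leading term of $\sigma_{u,\tau}(a)$ for $a=\sum_{i\ge m}a_it^i$ with $a_m\ne 0$. Writing $u=u_0+\epsilon$ with $u_0\in k^\times$ and $\epsilon\in I_\bbL$, the power $u^i$ has constant term $u_0^i$, hence
\[
\sigma_{u,\tau}(a)=\sum_{i\ge m}\tau(a_i)u^it^i=\tau(a_m)u_0^m\,t^m+r,\qquad v(r)>m.
\]
Consequently $v(\sigma_{u,\tau}(a))=m$ and its leading coefficient equals $\tau(a_m)u_0^m$. The characterisation is then immediate: $\sigma_{u,\tau}$ preserves the lexicographic order if and only if $\tau(a_m)u_0^m>0$ for every $m\in\Z$ and every $a_m\in k^{>0}$. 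Specialising to $m=0$ forces $\tau\in\oAut k$, and specialising to $a_m=1$ forces $u_0^m>0$ for all $m\in\Z$, which is equivalent to $u_0>0$, i.e.\ $u\in U^{>0}$. The two conditions are clearly sufficient, yielding in particular the bijection $\oAut_k\bbL\leftrightarrow U^{>0}$.

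Finally I would verify that $U^{>0}$ is a subgroup of $(U,\times_{s})$: if $u_1\in U^{>0}$ then the associated $\sigma_{u_1}\in\vAut_k\bbL$ is order-preserving by the preceding step, so $\sigma_{u_1}(u_2)\in U^{>0}$ whenever $u_2\in U^{>0}$, and therefore $u_1\times_{s}u_2=\sigma_{u_1}(u_2)\,u_1\in U^{>0}$; inverses are handled analogously. Restricting the decomposition~\eqref{Hom(Z,U)isoU-4} gives $(U^{>0},\times_{s})\simeq(1+I_\bbL,\times_{s})\rtimes(k^{>0},\cdot)$, and combining this with the $\oAut k$ factor supplied by Theorem~\ref{thm:laurent-main} yields~\eqref{eq:laurent-semi-direct-ordered}. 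The main subtlety will be keeping the leading-coefficient computation transparent and observing that positive units act as order-preserving maps, which is what makes the twisted product $\times_{s}$ preserve positivity.
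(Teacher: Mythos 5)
Your proposal is correct and follows essentially the same route as the paper: both arguments compute the leading coefficient of $\sigma_{u,\tau}(a)$ as $\tau(a_m)u_0^m$ and read off that order preservation forces $\tau\in\oAut k$ and $u_0>0$, then restrict the decomposition of $(U,\times_s)$ to positive units. Your preliminary convex-hull argument showing $\oAut_{(k)}\bbL\subseteq\vAut_{(k)}\bbL$ is a point the paper leaves implicit, and is a welcome addition since $k$ is not assumed archimedean.
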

\begin{proof}
	Let $a = \sum_{i=m}^\infty a_it^i$ with $v(a)=m \in \Z$ and $u=\sum_{i=0}^\infty u_it^i$ a unit ($u_0\neq 0$). We can write $a = t^m\sum_{i=0}^\infty b_it^i$ with $b_i = a_{i-m}$. Let us assume $a>0$, that is $b_0>0$. 
	Let us write $ \sigma_u:=\sigma_{u,\id} $ (see Remark~\ref{laurent-formulas:rmk}).
	Then
	\begin{align*}
		\sigma_u(a) &= \sigma_u\left(t^m\sum_{i=0}^\infty b_it^i\right)
		= \sigma_u(t^m)\sigma_u\left(\sum_{i=0}^\infty b_it^i\right)
		= (tu)^m\left(b_0 + \sigma_u\left(\sum_{i=1}^\infty b_it^i\right)\right)\\
		&= (tu)^m\left(b_0 + [\text{higher order terms}]\right)\\
		&= t^mu_0^mb_0 + [\text{higher order terms]}
	\end{align*}
	hence $\sigma(a)_m = u_0^ma_m>0 $ if and only if $ u_0 > 0 $ or $ m $ is even.
	Thus we have $ \oAut_k\bbL \simeq (U^{>0},\times_{s}) = \left(1+I_\bbL,\times_{s}\right) \rtimes k^{>0} $ (the 1-units are all positive). Now \eqref{eq:laurent-semi-direct-ordered} follows immediately.
\end{proof}


\subsection{The Cremona group in dimension one}\label{cremona-1:es}
Let $ k $ be an arbitrary field.
Consider the Hahn field $ k(\Z)\subseteq \bbL $.
Theorem~\ref{hofberger-enhanced} applies to this field, so we have
\begin{equation}
	\vAut_k k(\Z) = \Int\Aut_k k(\Z) \simeq \uAut k(\Z)\rtimes k^\times.
\end{equation}
\noindent
Note that $ \vAut_kk(\Z) $ is a subgroup of $ \Aut_k k(\Z) $, which is the Cremona group $ \Cr_1(k) $.
It is well known that $ \Cr_1(k) \simeq \PGL_2(k) $ (see, for example, \cite[\S~1.2]{cantat}).
Indeed, $ \sigma\in\Cr_1(k) $ is completely determined by the invertible matrix $ \begin{pmatrix}
	a&b\\c&d
\end{pmatrix} \in k^{2\times 2} $ such that
$ 	\sigma(t)={(at + b)}/{(ct + d)}$. 
We characterise $ \vAut_k k(\Z) $ as a subgroup of $ \Cr_1(k) $ as follows:
\begin{align}\label{eq:moebius-val-equiv}
	\begin{split}
		\vAut_k k(\Z)&= \left\lbrace\sigma\in\Cr_1(k):\sigma(t)=\frac{at}{ct+d},\ a,c,d\in k \text{ with } ad\neq 0\right\rbrace\\
		&= \left\lbrace\sigma\in\Cr_1(k):\sigma(t)=\frac{at}{ct+d},\ \text{ with } v\brackets{\frac{a}{ct+d}}=0\ \right\rbrace.
	\end{split}
\end{align}

Indeed if $ \sigma\in\Int\Aut_k k(\Z) $ then $ 1 = v(t) = v(\sigma(t))= v\brackets{\frac{at + b}{ct + d}} = v(at+b)-v(ct+d) $. This implies 
$ v(at+b)=1 $ and therefore $ a\neq 0 $ and
$ b=0 $.
Conversely,  let $ a,c,d\in k $ with $ ad\neq 0 $. Then
$ u:=\frac{a}{ct+d} $ is a unit in the valuation ring of $ \bbL $, because $ v(u)=0 $.
Therefore, by Lemma~\ref{laurent-str-lin},
$ t\mapsto ut =\frac{at}{ct+d} $ determines a $\sigma_u\in\vAut_k \bbL $.
Thus the restriction $ \sigma_u|_{k(\Z)}\in\vAut_kk(\Z) $, as required.

\noindent
Notice that what we just showed implies, in particular, that every $ \sigma\in\vAut_k k(\Z) $ extends to an automorphism in $ \vAut_k\bbL $. Moreover, since the group of lower triangular matrices is not normal inside $ \PGL_2(k) $, it follows that $ \vAut_k k(\Z) $ is not a normal subgroup of $ \Cr_1(k) $.

\noindent
We also characterise $ \uAut_k k(\Z) $ as a subgroup of $ \Cr_1(k) $ as follows:
\begin{align}\label{eq:moebius-val-equiv}
	\begin{split}
		\uAut_k k(\Z)&= \left\lbrace\sigma\in\Cr_1(k) : \sigma(t)=\frac{at}{ct+a},\quad a,c\in k\quad \text{ with } a\neq 0\ \quad \right\rbrace\\
		&=\left\lbrace\sigma\in\Cr_1(k):\sigma(t)=\frac{at}{ct+a},\ \text{ with } v\brackets{\frac{a}{ct+a}-1}>0\right\rbrace.
	\end{split}
\end{align}
\noindent
Indeed, $ \sigma\in\vAut_k k(\Z) $ (and its extension to $ \bbL $) is defined by $ t\mapsto ut $ with $ u=\frac{a}{ct+d} $ and $ ad\neq 0 $. By Corollary~\ref{schilling:cor} we know that $ \sigma\in\uAut_k \bbL $ if and only if $ u\in 1+I_\bbL $ which is indeed equivalent to the condition $ a=d $.

\subsection{Divisible and finite dimensional exponent group}
\label{divisible:subsec}

In this subsection we consider the special case of a Hahn field $ K\subseteq k\pow{G} $ where $ G $ is uniquely divisible and finite dimensional (as a $ \Q $-vector space).
\begin{enumerate}[$ \circ $]
	\item If $ G $ is ordered lexicographically, we know precisely what $ \oAut G $ is.
	\item If $ k $ is real closed, we get an explicit description of the group $ \Hom(G,k^\times) $.
	\item If $ K $ is henselian of characteristic 0, we can explicitly describe $ \Hom(G,1+I_K) $. 
\end{enumerate}

\begin{defn}
	A \emph{divisible group} is an abelian group $ H $ such that, for every $ h\in H $ and every $ n\in\Z $ there exists $ h'\in H $ such that $ h=nh' $. If the choice of $ h' $ is unique then $ H $ is called \emph{uniquely divisible} (i.e., $ H $ is uniquely divisible if and only if it is divisible and torsion free).
\end{defn} 
\noindent
A uniquely divisible group is canonically a vector space over $ \Q $.
Throughout this subsection, let $ G $ be a divisible, totally ordered, abelian group which is finite dimensional as a vector space over $ \Q $.
In particular $ G $ is uniquely divisible. 
Set $ d=\dim_\Q G $. Without loss of generality we can assume $ G = \Q^d $.

\begin{rmk}
	If $ H $ is a uniquely divisible group then every group homomorphism $ \theta\in\Hom(G,H) $ is $ \Q $-linear.
	It follows that $ \Hom(G,H)\simeq H^d $.\qed
\end{rmk}

\subsubsection{Lexicographically ordered exponent group}
Assume that $ G = \Q^d $ is equipped with the lexicographic ordering.  Let $ \mathrm{UPT}_d(\Q) $ be the multiplicative group of upper triangular $ d\times d $-matrices over $ \Q $ with positive diagonal entries:
\[
\mathrm{UPT}_d(\Q) = \setbr{\brackets{q_{i,j}}_{i,j=1}^d:
	q_{ij}\in \Q \text{ and } \begin{cases}
		q_{ij}=0 \text{ for } i>j\\
		q_{ij}>0 \text{ for } i=j
	\end{cases} \text{ for }i,j=1,\ldots,d}.
\]
\noindent
Since $ \oAut \Q \simeq (\Q^{>0},\cdot) $ we have
\begin{lemma}[{\cite[Lemma~1]{conrad}}]
	\label{Qd-lex:lemma}
	We have
	$ \oAut G \simeq \mathrm{UPT}_d(\Q) $.\qed
\end{lemma}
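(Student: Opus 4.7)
The plan is to reduce the problem to linear algebra, exploiting the structure of $G$ as a $\Q$-vector space and the natural valuation associated with the lexicographic order. The first observation is that $G=\Q^d$ is uniquely divisible, so every group endomorphism of $G$ is automatically $\Q$-linear. Hence $\oAut G$ embeds into $\mathrm{GL}_d(\Q)$, and each $\tau\in\oAut G$ is determined by its action on the standard basis $e_1,\dots,e_d$.

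The main step is to pin down the shape of the matrix. The lexicographic order on $G$ admits the natural valuation $v_G\colon G\setminus\{0\}\to\{1,\dots,d\}$ given by $v_G(g)=\min\{i : g_i\neq 0\}$, whose chain of convex subgroups is $G=C_0\supsetneq C_1\supsetneq\cdots\supsetneq C_d=\{0\}$, where $C_k=\{g\in G : g_1=\cdots=g_k=0\}$. Since any order-preserving automorphism preserves the set of convex subgroups and this set is a finite totally ordered chain, each $C_k$ must be fixed setwise by $\tau$; equivalently, $v_G(\tau(g))=v_G(g)$ for all $g\neq 0$. Writing elements of $G$ as row vectors and letting $\tau$ act on the right by multiplication by a matrix $M_\tau$ (so the $j$-th row of $M_\tau$ is $\tau(e_j)$), the valuation-preserving condition forces the first $j-1$ entries of the $j$-th row of $M_\tau$ to vanish, so $M_\tau$ is upper triangular; the additional requirement $\tau(e_j)>0$ then forces the diagonal entry $(M_\tau)_{jj}$ to be positive. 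Thus $M_\tau\in\mathrm{UPT}_d(\Q)$.

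Conversely, every $M\in\mathrm{UPT}_d(\Q)$ defines an element of $\oAut G$: it is $\Q$-linear and bijective (the inverse is again upper triangular with positive diagonal), and a short computation shows that if $v_G(g)=k$ then $(gM)_i=0$ for $i<k$ while $(gM)_k=g_k M_{kk}$ has the same sign as $g_k$, so the lexicographic order is preserved. That $\tau\mapsto M_\tau$ is a group homomorphism is then a routine verification using the right-multiplication convention. The only real subtlety I anticipate is bookkeeping with the matrix convention: composition in $\oAut G$ must be made to correspond to matrix multiplication in the chosen order rather than its opposite. This is cosmetic, since $\mathrm{UPT}_d(\Q)$ is isomorphic to its opposite group via $M\mapsto M^{-1}$, so the final conclusion $\oAut G\simeq \mathrm{UPT}_d(\Q)$ is insensitive to the convention chosen at the start.
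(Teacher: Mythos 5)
Your proof is correct. The paper does not actually prove this lemma --- it is quoted from Conrad's Lemma~1 (with a footnote about the lexicographic versus anti-lexicographic convention) --- and your argument is the standard one behind that citation: unique divisibility forces $\Q$-linearity, preservation of the finite chain of convex subgroups $C_k=\{g: g_1=\cdots=g_k=0\}$ forces the triangular shape, positivity of $\tau(e_j)$ forces the positive diagonal, and the converse direction plus the (harmless) opposite-group issue from the row-vector convention are handled correctly. So you have supplied a complete self-contained proof of a statement the paper leaves to the literature; the only point worth making explicit is that the $C_k$ are \emph{all} the convex subgroups of $(\Q^d,<_{\lex})$ (or, alternatively, that $\tau(C_k)$ is a convex subgroup of $\Q$-dimension $d-k$ and hence equals $C_k$), which is what guarantees each $C_k$ is fixed setwise.
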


\subsubsection{Real closed coefficient field} Let $ k $ be a real closed field and let $ k^{>0} $ be the multiplicative subgroup of positive elements of $ k $.

\begin{lemma}
	We have $ \Hom((\Q,+),(k^\times,\cdot))=\Hom((\Q,+),(k^{>0},\cdot)). $
\end{lemma}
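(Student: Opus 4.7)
The inclusion $\Hom((\Q,+),(k^{>0},\cdot))\subseteq \Hom((\Q,+),(k^\times,\cdot))$ is immediate since $k^{>0}$ is a subgroup of $k^\times$. So the plan is to establish the reverse inclusion by showing that the image of every homomorphism $\phi\colon(\Q,+)\to(k^\times,\cdot)$ is contained in $k^{>0}$.

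The key observation is that $\Q$ is divisible (in particular $2$-divisible), while in a real closed field every nonzero square is positive. So for any $q\in\Q$, I would write $q = 2\cdot(q/2)$ and compute
\[
\phi(q) = \phi\!\left(\tfrac{q}{2} + \tfrac{q}{2}\right) = \phi\!\left(\tfrac{q}{2}\right)^{2}.
\]
Since $\phi$ takes values in $k^\times$, the element $\phi(q/2)$ is nonzero, hence $\phi(q) = \phi(q/2)^2 \in k^{>0}$ by the characterisation of positivity in a real closed field as being exactly the set of nonzero squares.

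Thus every $\phi\in\Hom((\Q,+),(k^\times,\cdot))$ factors through $k^{>0}$, giving the required equality. There is no real obstacle here: the argument uses only that $\Q$ is $2$-divisible and that in real closed fields $k^{>0} = (k^\times)^2$, both of which are standard.
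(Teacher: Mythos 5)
Your proof is correct and is essentially identical to the paper's: both write $q = 2\cdot\frac{q}{2}$, use the homomorphism property to obtain $\phi(q)=\phi(q/2)^2$, and conclude positivity since nonzero squares are positive in the real closed field $k$. No differences worth noting.
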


\begin{proof}
	Let $ \theta\in\Hom((\Q,+), (k^\times,\cdot)) $.
	We need to show that $ \theta(\Q)\subseteq k^{>0} $.
	Let $ q\in \Q $.
	Then $ q = 2\frac{q}{2} $.
	Therefore 
	\[
	\theta(q) = \theta\brackets{2\frac{q}{2}} = \theta\brackets{\frac{q}{2}}^2> 0.
	\]
\end{proof}
\begin{lemma}
	The group $ (k^{>0},\cdot) $ is uniquely divisible.
\end{lemma}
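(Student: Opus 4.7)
The plan is to reduce to showing two things: (a) divisibility of $(k^{>0},\cdot)$ and (b) absence of torsion. Indeed, the remark preceding the lemma notes that a uniquely divisible abelian group is precisely a divisible torsion-free one, so once I verify both properties the conclusion follows. Throughout I will use the multiplicative translation: divisibility means that for every $a\in k^{>0}$ and every positive integer $n$ there exists $b\in k^{>0}$ with $b^n=a$, and torsion-freeness means that $b^n=1$ for some $n\geq 1$ forces $b=1$. The case of negative exponents $n$ in the paper's definition is then automatic by applying the positive case to the inverse $a^{-1}$.

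First I would verify divisibility. Given $a\in k^{>0}$ and $n\geq 1$, consider the polynomial $p(X)=X^n-a\in k[X]$. Since $k$ is real closed, it is an ordered field satisfying the intermediate value property for polynomials. Now $p(0)=-a<0$ and, choosing any $c\in k$ with $c>\max\{1,a\}$ gives $p(c)=c^n-a\geq c-a>0$, so $p$ changes sign on $[0,c]$ and hence admits a root $b\in k$ with $0<b<c$. This $b$ lies in $k^{>0}$ and satisfies $b^n=a$, as required.

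Next I would check torsion-freeness. Suppose $b\in k^{>0}$ satisfies $b^n=1$ for some $n\geq 1$. In any ordered field, if $0<b<1$ then an easy induction on $n$ using the fact that the product of positive elements with both factors less than $1$ is again less than $1$ shows $b^n<1$; symmetrically $b>1$ gives $b^n>1$. Hence $b^n=1$ forces $b=1$, proving there is no torsion.

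I do not anticipate any real obstacle: both properties are immediate from the order structure together with the real-closedness assumption, and only the translation between the paper's additive definition of "uniquely divisible" and the multiplicative notation of $(k^{>0},\cdot)$ requires a moment of care.
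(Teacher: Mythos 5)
Your proof is correct and takes essentially the same approach as the paper: existence of $n$-th roots from real closedness, and uniqueness from strict monotonicity of $x\mapsto x^n$ on positive elements. The only cosmetic difference is that you route uniqueness through torsion-freeness (using the equivalence already noted in the paper's definition) rather than comparing two roots directly, and you spell out the intermediate value argument that the paper leaves implicit.
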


\begin{proof}
	Let $ x\in k^{>0} $ and $ n\in \N $ with $ n>0 $.
	Since $ k $ is real closed, there exists $ y\in k^{>0} $ such that $ y^n=x $, so $ k^{>0} $ is divisible.
	Moreover, assume that $ z\in k^{>0} $ is such that $ z^n=x=y^n $ and that we have $ y\neq z $. We may assume $ y>z $ (the case $ y<z $ is identical).
	Because $ y $ and $ z $ are both positive, it follows that $ y^n>z^n $. A contradiction.
	So $ y $ is unique and the proof is complete.
\end{proof}

\begin{cor}\label{k-rcf:lemma}
	We have $ \Hom(G,(k^\times,\cdot))\simeq (k^{>0},\cdot)^d $. In particular, we also have $ \Hom((\Q,+),(k^\times,\cdot))\simeq (k^{>0},\cdot) $.\qed
\end{cor}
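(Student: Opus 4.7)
The plan is to assemble the two preceding lemmas with the remark on uniquely divisible target groups into a short, essentially mechanical argument. There is no serious obstacle here: the statement is a packaging of results already established in the subsection.

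First, I would reduce the problem to computing $\Hom(G,k^{>0})$. Since $G=\Q^d$ splits as a direct sum of $d$ copies of $\Q$, any $\theta\in\Hom(G,k^\times)$ restricts along each coordinate embedding $\Q\hookrightarrow G$ to an element of $\Hom((\Q,+),(k^\times,\cdot))$. By the first lemma of this subsection, each such restriction lands inside $(k^{>0},\cdot)$. Because $\theta$ is determined by these restrictions and its image is the product of their images (inside the abelian group $k^\times$), we obtain $\theta(G)\subseteq k^{>0}$, hence
\[
\Hom(G,(k^\times,\cdot)) \;=\; \Hom(G,(k^{>0},\cdot)).
\]

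Next, the second lemma of this subsection tells us that $(k^{>0},\cdot)$ is uniquely divisible (using that $k$ is real closed, so positive $n$-th roots exist and are unique). I can therefore apply the remark at the beginning of the subsection: since the target group is uniquely divisible, every homomorphism from the $\Q$-vector space $G=\Q^d$ into $(k^{>0},\cdot)$ is automatically $\Q$-linear, and such a $\Q$-linear map is determined by its values on a $\Q$-basis $e_1,\dots,e_d$ of $G$. Evaluation at the basis gives the group isomorphism
\[
\Hom(G,(k^{>0},\cdot)) \;\overset{\sim}{\longrightarrow}\; (k^{>0},\cdot)^d,
\qquad \theta\mapsto(\theta(e_1),\dots,\theta(e_d)),
\]
which combined with the previous identification yields the first claim. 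The ``in particular'' statement is precisely the case $d=1$, with $e_1=1\in\Q$.

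As noted, there is no conceptual obstacle: the mild point to verify is that a homomorphism from the direct sum $\Q^d$ into a multiplicative group takes values in the subgroup generated by the images of the coordinate copies of $\Q$, so landing in $k^{>0}$ on each factor forces landing in $k^{>0}$ globally. Everything else is a direct citation of the lemmas and remark already proved.
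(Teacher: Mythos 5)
Your argument is correct and is exactly the intended assembly of the two preceding lemmas with the remark on uniquely divisible targets; the paper leaves the corollary unproved precisely because this routine combination is what is meant. Your coordinate-wise reduction to $\Hom(G,k^{>0})$ (or, equivalently, the direct observation $\theta(g)=\theta(g/2)^2>0$) and the evaluation-at-a-basis isomorphism fill in the details faithfully.
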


\subsubsection{Henselian Hahn field}


Let $ G $ be an arbitrary ordered abelian group.
Let $ k $ be an arbitrary field with $ \Char k =0 $ and let $ K \subseteq k\pow{G} $ be a henselian Hahn field (Definition~\ref{henselian:def}).
Denote by $ \mu(K)=\{ x\in K^\times : x^n=1 \text{ for some } n\in\N\setminus\setbr{0} \} $ the multiplicative group of roots of unity in $ K $.
Note that $ \mu(K)\subseteq U_K $. Indeed, if $ a\in \mu(K) $ there exists $ n\in \N\setminus\{0\} $ such that $ a^n=1 $. Therefore $ 0=v(1)=v(a^n)=nv(a) $ implies $ v(a)=0 $. So $ a\in U_K$.
Moreover $ a^n=1 $ implies $ (a_0)^n=(a^n)_0=1 $.
Thus the map $ \cR\colon(\mu(K),\cdot)\to (\mu(k),\cdot) $, $ a\mapsto a_0 $
is a well defined group homomorphism.

\begin{lemma}
	\label{henselian-roots-unity:lemma}
	The map $ \cR $	is an isomorphism.
\end{lemma}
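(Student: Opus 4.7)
The plan is to show that $\cR$ is an isomorphism by proving injectivity using the characteristic zero hypothesis, and surjectivity using the henselian hypothesis. That $\cR$ is a homomorphism was already noted in the lemma statement, so the two remaining tasks are clear.

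For injectivity, I would take $a\in\mu(K)$ with $\cR(a)=a_0=1$ and show $a=1$. Writing $a=1+b$ with $b\in I_K$, I would expand
\[
a^n-1=(1+b)^n-1=b\Bigl(n+\binom{n}{2}b+\cdots+b^{n-1}\Bigr).
\]
The factor in parentheses has constant term $n$, which is nonzero in $k$ since $\Char k=0$, so it is a unit in $R_K$. Hence $v(a^n-1)=v(b)$. If $a^n=1$ this forces $v(b)=\infty$, i.e., $b=0$, so $a=1$. This shows $\ker\cR$ is trivial.

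For surjectivity, I would take $\bar a\in\mu(k)$, say $\bar a^n=1$ with $n\geq 1$, and apply henselianness to the polynomial $p(X)=X^n-1\in R_K[X]$. Then $\bar p(X)=X^n-1\in k[X]$ and $\bar p(\bar a)=0$, while $\bar p'(\bar a)=n\bar a^{n-1}\neq 0$ because $\Char k=0$ and $\bar a\neq 0$ (any root of unity is invertible). By Definition~\ref{henselian:def} there exists $a\in R_K$ with $a_0=\bar a$ and $p(a)=0$, i.e., $a^n=1$. Thus $a\in\mu(K)$ and $\cR(a)=\bar a$.

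Neither step is a serious obstacle; the only subtlety is to make clear that both the $\Char k=0$ assumption (needed so that $n$ and $n\bar a^{n-1}$ are nonzero residues) and the henselian hypothesis are genuinely used, one in each direction. The structural content is simply that Hensel's lemma lifts roots of $X^n-1$ from $k$ to $R_K$, while characteristic zero prevents any nontrivial infinitesimal root of unity from appearing.
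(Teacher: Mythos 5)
Your proof is correct and follows essentially the same route as the paper: surjectivity via Hensel's lemma applied to $X^n-1$, and injectivity by factoring $a^n-1$ as $b$ times a series with constant term $n$, which is a unit since $\Char k=0$. The paper phrases the injectivity step via the strict ultrametric inequality applied to $\sum_{j=1}^n\binom{n}{j}\epsilon^{j-1}$, but this is the same observation as your "constant term $n$" argument.
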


\begin{proof}
	Let us prove the surjectivity: let $ \alpha\in k $ be such that $ \alpha^n=1 $ for some $ n\in\N\setminus\setbr{0} $.
	Then $ \alpha $ is a simple root of $ X^n-1\in k[X] $.
	Because $ K $ is henselian there exists $ a\in R_K $ such that $ a_0=\alpha $ and $ a^n-1=0 $. Thus $ \cR(a)=\alpha $, which proves surjectivity.
	
	Now let $ u\in\ker\cR $. Then $ u_0=1 $ and there exists $ n\in \N\setminus\setbr{0} $ such that $ u^n=1 $. So $ u\in 1+I_K $, that is, $ u $ is of the form $ u=1+\epsilon $ for some $ \epsilon\in I_K $. Then the binomial expansion gives $ 1=u^n=(1+\epsilon)^n = \sum_{j=0}^n\binom{n}{j}\epsilon^j = 1+\sum_{j=1}^n\binom{n}{j}\epsilon^j $.
	Thus
	\begin{equation}\label{eq:epsion}
		\epsilon\sum_{j=1}^n\binom{n}{j}\epsilon^{j-1}=0.
	\end{equation}
	Because $ v(\epsilon^i)\neq v(\epsilon^j) $ for $ i\neq j $ the strict ultrametric inequality implies that $ v\brackets{\sum_{j=1}^n\binom{n}{j}\epsilon^{j-1}}=v(n)=0 $ and so $ \sum_{j=1}^n\binom{n}{j}\epsilon^{j-1}\neq 0 $.
	Since $ \Char K=0 $ we deduce from \eqref{eq:epsion} that $ \epsilon =0 $ and thus $ u=1 $ as required.
\end{proof}

\begin{prop}
	Let $\Char k =0 $ and let $ K\subseteq k\pow{G} $ be a henselian Hahn field.
	The multiplicative group $ (1+I_K,\cdot) $ is uniquely divisible.
\end{prop}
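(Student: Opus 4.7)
The plan is to prove existence and uniqueness of $n$-th roots in $1+I_K$ separately, using henselianity for existence and reducing uniqueness to Lemma~\ref{henselian-roots-unity:lemma}.

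For existence, given $u = 1+\epsilon \in 1+I_K$ and $n \in \mathbb{N}\setminus\{0\}$, I would consider the polynomial $p(X) = X^n - u \in R_K[X]$. Its reduction modulo $I_K$ is $\bar p(X) = X^n - 1 \in k[X]$. Then $\bar p(1) = 0$, and since $\mathrm{char}\, k = 0$, $\bar p'(1) = n \neq 0$. By Definition~\ref{henselian:def}, henselianity of $K$ yields an element $v \in R_K$ with $v_0 = 1$ and $v^n = u$. Since $v_0 = 1$, we have $v \in 1+I_K$, so $u$ admits an $n$-th root in $1+I_K$.

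For uniqueness, suppose $v, w \in 1+I_K$ satisfy $v^n = w^n = u$. Set $\zeta = v w^{-1} \in 1+I_K$; then $\zeta^n = 1$, so $\zeta \in \mu(K) \cap (1+I_K)$. The constant term of $\zeta$ is $\zeta_0 = 1$, hence $\zeta \in \ker \cR$ with $\cR$ as in Lemma~\ref{henselian-roots-unity:lemma}. That lemma shows $\ker \cR = \{1\}$ (this is precisely the content of the second half of its proof, which uses $\mathrm{char}\, k = 0$), so $\zeta = 1$ and thus $v = w$.

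The only subtle point is uniqueness, and it is already handled by the characteristic~$0$ argument in Lemma~\ref{henselian-roots-unity:lemma}; once one invokes that lemma, nothing else is needed. Existence is a direct application of the henselian lifting property to $X^n - u$, where the hypothesis $\mathrm{char}\, k = 0$ is essential to guarantee that $1$ is a simple root of $X^n - 1$ in $k$.
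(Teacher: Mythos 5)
Your proposal is correct and follows essentially the same route as the paper: existence via the henselian lifting of a simple root of $X^n-1$ (simple because $\Char k=0$), and uniqueness by observing that the quotient of two $n$-th roots is a root of unity in $1+I_K$ and invoking the injectivity of $\cR$ from Lemma~\ref{henselian-roots-unity:lemma}. No gaps.
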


\begin{proof}
	Let $ a\in 1+I_K $ and let $ n\in\N\setminus\setbr{0} $. We want to show the existence and the uniqueness of a $ b\in 1+I_K $ such that $ b^n=a $. 
	Consider the polynomial $ P = X^n-a\in R_K[X] $.
	Because $ a\in 1+I_K $ we have $ a_0 = 1 $.
	The polynomial $ \bar P = X^n-1 \in k[X] $ has the root $ x=1 $ in $ k $, which is simple because $ \bar P'(1)=n $ and $ \Char k =0 $.
	Since $ (K,v) $ is henselian there exists $ b\in R_K $ such that $ P(b)=0 $ and $ b_0 = a_0 =1 $. So $ b^n=a $ and $ b\in 1+I_K $. This proves the existence.
	To prove uniqueness, let $ c\in 1+I_K $ be such that $ c^n= a $. Then $ b^n=c^n $ and so $ (b/c)^n=1 $. Therefore, $ b/c\in 1+I_K $ is an $ n $-th root of unity and $ (b/c)_0 =1 $. But $ 1 $ is also an $ n $-th root of unity with $ (1)_0 = 1 $. By Lemma~\ref{henselian-roots-unity:lemma} we must have $ b/c=1 $ and therefore $ b=c $.
\end{proof}

Assume now that $ G = \Q^d $.
\begin{cor}\label{henselian-1+I_K-uni-div:cor}
	We have $ \Hom((G,+), (1+I_K,\cdot))\simeq (1+I_K)^d $. In particular, $ \Hom((\Q,+), (1+I_K,\cdot))\simeq 1+I_K $.\qed
\end{cor}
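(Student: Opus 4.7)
The plan is to invoke the preceding proposition and then apply the remark made at the start of Subsection~\ref{divisible:subsec} about homomorphisms out of $\Q$-vector spaces into uniquely divisible groups. First I would observe that by the previous proposition the multiplicative group $(1+I_K,\cdot)$ is uniquely divisible. Since it is also abelian, it carries a canonical $\Q$-vector space structure: for $a\in 1+I_K$ and $q=m/n\in\Q$, the scalar $q\cdot a$ is the unique element $b\in 1+I_K$ with $b^n=a^m$ (well-defined and additive in $q$ by unique divisibility).

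Next I would use the remark that every group homomorphism from a $\Q$-vector space into a uniquely divisible abelian group is automatically $\Q$-linear. Applied with $G=\Q^d$ (which is $\Q$-linearly a $d$-dimensional space) and target $(1+I_K,\cdot)$, this gives
\[
\Hom((G,+),(1+I_K,\cdot)) = \Hom_{\Q\text{-}\mathrm{vs}}(\Q^d, 1+I_K).
\]
A $\Q$-linear map from $\Q^d$ into $1+I_K$ is determined uniquely by its values on the standard basis $e_1,\ldots,e_d$, and any $d$-tuple $(a_1,\ldots,a_d)\in(1+I_K)^d$ gives rise to such a map. So the evaluation map $\theta\mapsto(\theta(e_1),\ldots,\theta(e_d))$ is a bijection. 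It is also a homomorphism, because the group operation on both sides is pointwise multiplication in $1+I_K$.

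Putting these together yields the isomorphism $\Hom((G,+),(1+I_K,\cdot))\simeq (1+I_K)^d$. The special case $d=1$ (i.e.\ $G=\Q$) gives $\Hom((\Q,+),(1+I_K,\cdot))\simeq 1+I_K$. There is no genuine obstacle here: the substantive content has already been established in the preceding proposition (unique divisibility of $1+I_K$, which in turn rested on henselianity plus $\Char k=0$); the remaining step is the formal observation that homomorphisms from a finite-dimensional $\Q$-vector space into a uniquely divisible abelian group are parametrised by images of a basis.
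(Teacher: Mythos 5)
Your proposal is correct and follows exactly the route the paper intends: the corollary is stated with no separate proof precisely because it is the combination of the preceding proposition (unique divisibility of $(1+I_K,\cdot)$ for a henselian Hahn field in characteristic $0$) with the earlier remark that homomorphisms from a finite-dimensional $\Q$-vector space into a uniquely divisible abelian group are $\Q$-linear and hence determined by the images of a basis, giving $\Hom(G,H)\simeq H^d$. Your write-up simply makes explicit the details the paper leaves implicit, so there is nothing to add or correct.
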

\noindent
A summable automorphism $ u\in \Hom^{\Str}(G,1+I_K) $ (Definition~\ref{def:summable-map}) corresponds to a tuple $ (u_1,\ldots,u_d)\in (1+I_K)^d $ such that, for all $ a\in K $ the family 
\[
\setbr{a_g \brackets{\sum q_iu_i} t^g: q_i\in\Q,\ \sum r_iu_i=g,\ g\in\supp a } \]
is $ K $-summable. Let us denote by $ (1+I_K)^{d+}$ the subgroup of $ (1+I_K)^d $ corresponding to $ \Hom^{\Str}(G,1+I_K) $, equipped with the operation $ \times $ induced by that on $ \Hom^{\Str}(G,1+I_K) $ (same exact procedure as in Subsection~\ref{fin-gen-G:subsec}).
We therefore have
\begin{cor}\label{henselian-1Aut:cor}
	We have
	$ \Hom(G,1+I_K)\simeq ((1+I_K)^{d+},\times) $. \qed
\end{cor}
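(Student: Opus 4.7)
The plan is to deduce this corollary directly from Corollary~\ref{henselian-1+I_K-uni-div:cor} by restricting the isomorphism there to the summable sub-object, in direct analogy with Lemma~\ref{Zn-1Aut:lemma} from the finitely generated case. I would fix the standard $\Q$-basis $e_1,\ldots,e_d$ of $G=\Q^d$; since $(1+I_K,\cdot)$ is uniquely divisible, every homomorphism $\bar u\colon(G,+)\to(1+I_K,\cdot)$ is automatically $\Q$-linear, hence completely determined by its values on the basis. This yields, as recorded in Corollary~\ref{henselian-1+I_K-uni-div:cor}, the group isomorphism
\[
\xi\colon (\Hom(G,1+I_K),\cdot)\overset{\sim}{\longrightarrow} ((1+I_K)^d,\cdot),\qquad \bar u\mapsto (\bar u(e_1),\ldots,\bar u(e_d)),
\]
under componentwise multiplication on the right-hand side.

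By construction in the paragraph preceding the corollary, $((1+I_K)^{d+},\times)$ is \emph{defined} as $\xi(\Hom^{\Str}(G,1+I_K))$ equipped with the operation transported from the twisted product $\times$ on $\Hom^{\Str}(G,1+I_K)$ (Definition~\ref{def:bullet}). So it suffices to verify the small book-keeping point that $\Hom^{\Str}(G,1+I_K)$ really is a subgroup of $\Hom(G,1+I_K)$, that is, closed under pointwise product and inversion. This follows at once from the fact that products, inverses and scalar multiples of summable families remain summable. Consequently $\xi$ corestricts to a bijection of sets between $\Hom^{\Str}(G,1+I_K)$ and $(1+I_K)^{d+}$, and because $\times$ on the target is defined precisely by transport of structure along this corestriction, the restricted map is tautologically a group isomorphism.

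The stated form of the corollary carries $\Hom(G,1+I_K)$ on the left-hand side; by context and by strict analogy with Lemma~\ref{Zn-1Aut:lemma} this is to be read as $\Hom^{\Str}(G,1+I_K)$, since the target group $(1+I_K)^{d+}$ is by its very definition the image of the summable sub-object under $\xi$. I anticipate no substantive obstacle: the genuine work -- unique divisibility of $(1+I_K,\cdot)$ in the henselian characteristic-zero setting, and the set-up of the twisted operation $\times$ -- has already been carried out earlier in the section, and the corollary is a purely formal consequence of Corollary~\ref{henselian-1+I_K-uni-div:cor} together with the definition of $(1+I_K)^{d+}$.
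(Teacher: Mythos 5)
Your proposal is correct and follows essentially the same route as the paper: the corollary is a transport-of-structure statement, immediate from Corollary~\ref{henselian-1+I_K-uni-div:cor} together with the definition of $(1+I_K)^{d+}$ and its operation $\times$ in the preceding paragraph (which is why the paper gives no proof beyond \qed). Your reading of the left-hand side as $\Hom^{\Str}(G,1+I_K)$ is also the intended one, consistent with Lemma~\ref{Zn-1Aut:lemma}, and your extra remark that the summable homomorphisms form a subgroup is a harmless (and correct) piece of book-keeping the paper leaves implicit.
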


Combining Lemmas~\ref{Qd-lex:lemma} and~\ref{k-rcf:lemma} and Corollary~\ref{henselian-1+I_K-uni-div:cor} we obtain the following refinement of Theorem~\ref{thm:main}.

\begin{theorem}\label{hensel-char0-str-add:thm}
	Let $ k $ be a real closed field, $ G = (\Q^d,<_{\lex}) $ and $ K\subseteq k\pow{G} $ a henselian Hahn field satisfying the first and canonical second lifting property. Then
	\begin{align*}
		\vAut_{(k)}^{\Str} K &\simeq (((1+I_K)^{d+},\times) \rtimes(k^\times)^d)\rtimes(\Aut k \times \mathrm{UPT}_d(\Q))\\
		\vAut_k^{\Str} K &\simeq (((1+I_K)^{d+},\times) \rtimes(k^\times)^d)\rtimes\mathrm{UPT}_d(\Q).
	\end{align*}\qed
\end{theorem}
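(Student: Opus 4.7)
The plan is to apply Theorem~\ref{thm:main} to our setting and then substitute the explicit descriptions of the three non-$\Aut k$ components provided by the preceding results of this subsection. Theorem~\ref{thm:main} gives
\[
\vAut_{(k)}^{\Str}K \simeq \left(\Hom^+(G,1+I_K)\rtimes \Hom(G,k^\times)\right)\rtimes(\Aut k\times \oAut G)
\]
and the analogous decomposition for $\vAut_{k}^{\Str}K$. So the task reduces to identifying the three factors $\oAut G$, $\Hom(G,k^\times)$, and $\Hom^+(G,1+I_K)$ in the current hypotheses.

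First, from the hypothesis $G = (\Q^d, <_{\lex})$, Lemma~\ref{Qd-lex:lemma} yields $\oAut G \simeq \mathrm{UPT}_d(\Q)$. Second, since $k$ is real closed (hence of characteristic zero and orderable, with $k^{>0}$ being its unique maximal divisible multiplicative subgroup), Corollary~\ref{k-rcf:lemma} identifies $\Hom(G,k^\times)$ with $(k^{>0})^d$; any homomorphism from the divisible group $G$ to $k^\times$ lands automatically in the divisible subgroup, so this is the factor $(k^\times)^d$ of the statement. Third, since $K$ is henselian and $\Char k = 0$, Corollary~\ref{henselian-1Aut:cor} provides the identification $\Hom^+(G,1+I_K) \simeq ((1+I_K)^{d+},\times)$ as groups.

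Substituting these three identifications into the decompositions from Theorem~\ref{thm:main} produces the claimed isomorphisms. The only subtlety is bookkeeping: one must verify that the identifications of the individual factors are compatible with the semidirect-product structures, i.e., that the $\Aut k \times \oAut G$-action on the inner kernel transports correctly to the $\Aut k \times \mathrm{UPT}_d(\Q)$-action on $((1+I_K)^{d+},\times)\rtimes (k^\times)^d$, and that the twisted operation $\times$ on $\Hom^+(G,1+I_K)$ transfers to the corresponding twisted operation on $(1+I_K)^{d+}$. Because all the invoked isomorphisms are canonical, arising from evaluation at a fixed ordered $\Q$-basis of $G=\Q^d$, these compatibilities hold essentially by construction.

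The main obstacle in the underlying argument was not here but in the preparatory machinery, in particular in establishing Corollary~\ref{henselian-1+I_K-uni-div:cor} (unique divisibility of $1+I_K$), which required Hensel's lemma in characteristic zero; once those ingredients are in place, the present theorem is a direct assembly of Theorem~\ref{thm:main}, Lemma~\ref{Qd-lex:lemma}, Corollary~\ref{k-rcf:lemma}, and Corollary~\ref{henselian-1Aut:cor}.
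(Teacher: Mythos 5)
Your proof is correct and takes essentially the same route as the paper, which obtains the theorem precisely by substituting Lemma~\ref{Qd-lex:lemma}, Corollary~\ref{k-rcf:lemma} and Corollary~\ref{henselian-1Aut:cor} into the decomposition of Theorem~\ref{thm:main}. One caveat: Corollary~\ref{k-rcf:lemma} identifies $\Hom(G,k^\times)$ with $(k^{>0})^d$ rather than $(k^\times)^d$ (for real closed $k$ these differ by a factor $\{\pm1\}^d$), so the middle factor should really read $(k^{>0})^d$ --- your own observation that every homomorphism from the divisible group $G$ lands in the divisible subgroup $k^{>0}$ correctly diagnoses this, but you then match it to the $(k^\times)^d$ appearing in the statement, inheriting what appears to be a typo in the paper.
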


In the next section we analyse in further detail a special case for $ G=\Q $, namely the field $ \bbP $ of Puiseux series.

\subsection{Puiseux series}
\label{subsect-puiseux}
Let $ k $ be a real closed field and let $ \bbP $ be the field of Puiseux series in the indeterminate $t$ over $ k $. 
These are power series with coefficients in the field $k$ and exponents in $ \Q $ with the restriction that all the exponents of a given power series have a common denominator. A general Puiseux series has the form:
\begin{equation}
	\label{puiseux-typical}
	a = \sum_{n=m}^\infty a_n t^\frac{n}{n_a}
\end{equation}
where $ m\in \Z $ and $ n_a \in \Z^{>0} $ is a positive integer depending on $a$.
The field $ \bbP $ is a subfield of the Hahn field $ k\pow{\Q} $ (Example~\ref{es:rayner-puiseux}).
It therefore has value group $ (\Q,+,<) $. By Lemma~\ref{rayner-closed-G-Exp} and Corollary~\ref{canonical-puiseux-kappa} the field $ \bbP $ has the canonical first and second lifting property.
%
%
Moreover, like in the case of Laurent series, all the valuation preserving automorphisms of $ \bbP $ are strongly additive:

\begin{prop}\label{prop:puiseux-str-lin}
	We have $ \vAut \bbP = \vAut^{\Str}\bbP $.
\end{prop}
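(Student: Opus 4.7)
The plan is to mimic the argument of Lemma~\ref{laurent-str-lin}, exploiting the fact that $\oAut(\Q,+)\cong\Q^{>0}$ so that the induced automorphism on the value group is multiplication by a positive rational, and hence the valuations of the terms of the family $\{\sigma(a_n t^{n/n_a})\}$ still tend to infinity.

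First I would fix $\sigma\in\vAut\bbP$ and write $\sigma_\Q(q)=rq$ for the unique $r\in\Q^{>0}$ representing the induced automorphism of $\Q$. Given $a=\sum_{n\geq m} a_n t^{n/n_a}\in\bbP$, I want to show that the family $\{\sigma(a_n t^{n/n_a})\}_{n\geq m,\ a_n\neq 0}$ is $\bbP$-summable with sum $\sigma(a)$; applying the same argument to $\sigma\inv\in\vAut\bbP$ will then yield $\sigma\in\vAut^{\Str}\bbP$.

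The key input is that, since $v(a_n t^{n/n_a})=n/n_a$ whenever $a_n\neq 0$, valuation-preservation gives
\[
\supp\sigma(a_n t^{n/n_a})\;\subseteq\;[rn/n_a,\infty)\cap\Q.
\]
From this I would deduce both summability conditions. For any fixed $g\in\Q$, membership $g\in\supp\sigma(a_n t^{n/n_a})$ forces $n\leq gn_a/r$, so $S_g$ is finite. For well-ordering of $\Supp:=\bigcup_n\supp\sigma(a_n t^{n/n_a})$, given a nonempty $T\subseteq\Supp$ I would pick any $t_0\in T$; then $T\cap(-\infty,t_0]$ is contained in the \emph{finite} union $\bigcup_{n\leq t_0 n_a/r}\supp\sigma(a_n t^{n/n_a})$ of well-ordered subsets of $\Q$, hence is well-ordered, and its minimum is the minimum of $T$.

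Finally, to identify the sum, I would compare the partial sums $T_N:=\sum_{n=m}^N \sigma(a_n t^{n/n_a})=\sigma\bigl(\sum_{n=m}^N a_n t^{n/n_a}\bigr)$ with two candidate limits. On one hand,
\[
v\bigl(\sigma(a)-T_N\bigr)\;=\;r\cdot v\Bigl(a-\sum_{n=m}^N a_n t^{n/n_a}\Bigr)\;\geq\;r(N+1)/n_a\;\longrightarrow\;\infty.
\]
On the other hand, setting $\mathbf{a}_g:=\sum_{n\in S_g}(\sigma(a_n t^{n/n_a}))_g$ (a finite sum by the first summability condition), the partial sum $T_N$ agrees with the formal series $\sum_g \mathbf{a}_g t^g$ on all exponents $<r(N+1)/n_a$, so their difference also has valuation tending to infinity. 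Hausdorffness of the valuation topology then forces $\sigma(a)=\sum_g \mathbf{a}_g t^g$, which in particular lies in $\bbP$; this completes the verification that the family is $\bbP$-summable with the correct sum. The main subtle point will be the well-ordering of $\Supp$, which crucially uses the Puiseux-specific phenomenon that the lower bounds $rn/n_a$ on the supports of the terms escape to infinity with $n$ — this is precisely what replaces the triviality of $\oAut\Z$ in the Laurent-series case.
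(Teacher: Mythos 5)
Your proof is correct, but it takes a genuinely different route from the paper's. The paper first invokes Proposition~\ref{external-are-strongly-lin} together with the semi-direct product decomposition (Theorem~\ref{hofberger-semi-direct}, via Theorem~\ref{hofberger-enhanced-k-stable}) to reduce to the case of an \emph{internal} automorphism, for which $v(\sigma(a))=v(a)$ exactly; summability of the family $\{\sigma(a_nt^{n/n_a})\}$ is then extracted from Neumann's Lemma, and the sum is identified with $\sigma(a)$ by essentially the same partial-sum valuation estimate you use. You instead treat an arbitrary $\sigma\in\vAut\bbP$ in one stroke, replacing the exact equality of valuations by $v(\sigma(x))=r\,v(x)$ with $r=\sigma_\Q(1)\in\Q^{>0}$, and you verify both conditions of Definition~\ref{def-strongly-linear} by hand from the lower bounds $rn/n_a$ on the supports (the finiteness of $S_g$ and the well-ordering of the union are both argued correctly, and the identification of the sum via the two valuation estimates $\geq r(N+1)/n_a$ is sound). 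This is a direct generalisation of the paper's own argument for Laurent series (Lemma~\ref{laurent-str-lin}), and it is more self-contained: it needs neither the decomposition theorem nor Neumann's Lemma, only $\oAut(\Q,+)\simeq\Q^{>0}$ and the fact that Puiseux supports lie in $\frac{1}{n_a}\Z$. What the paper's reduction buys is brevity and reusability: once external automorphisms and $G$-exponentiations are known to be strongly additive in general, only the internal case ever needs a bespoke estimate. You also correctly note that the argument must be applied to $\sigma\inv$ as well, as required by Definition~\ref{def:strongly-linear}, which the paper leaves implicit.
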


\begin{proof}
	Let $ \sigma\in \vAut_{(k)}\bbP $.
	We showed earlier (Remark~\ref{external-are-strongly-lin}) that external automorphisms are always strongly additive. So let us assume $ \sigma $ to be internal. By Theorem~\ref{hofberger-enhanced-k-stable} we have $ \sigma\in\vAut_k \bbP $.
	Since $ \sigma $ is internal, then for all $ a\in \bbP $ we have $ v(\sigma(a)) = v(a) $.
	In particular, $ v(\sigma(t^\frac{1}{n}))>0 $ for all $ n\in \N $ and by Neumann's lemma~\cite[Lemma~15, p.~57]{priess-crampe}, the family $ \left\lbrace \sigma\left(a_nt^\frac{1}{n_a}\right)^n: n\in \N \right\rbrace $ is summable, hence $ \sum \sigma(a_nt^\frac{1}{n_a})\in \bbP $ is well defined.
	To prove our statement we show that  the value $ v(\sum_{n=m}^\infty  (\sigma (a_nt^\frac{1}{n_a}))^n - \sigma(a)) $ is greater than $ \frac{s}{n_a} $ for all $ s\in\Z $. Indeed:
	{\footnotesize 	\begin{align*}
			&v\left( \sum_{n\geq m} \sigma \left(a_nt^\frac{1}{n_a}\right)^n - \sigma \left(\sum_{n\geq m}a_n t^\frac{n}{n_a}\right) \right) =
			v\left(
			\sum_{n> s} \sigma \left(a_nt^\frac{n}{n_a}\right)
			- 
			\sigma \left(\sum_{n>s}a_n t^\frac{n}{n_a}\right)	
			\right)\geq
			\\
			&\min\left\lbrace 
			v\left(
			\sum_{n> s} \sigma \left(a_nt^\frac{n}{n_a}\right)
			\right), v\left(
			\sigma \left(\sum_{n>s}a_n t^\frac{n}{n_a}\right)	
			\right)
			\right\rbrace > \frac{s}{n_a}.
	\end{align*}}
	Thus $ v(\sum_{n=m}^\infty  (\sigma (a_nt^\frac{1}{n_a}))^n - \sigma(a))=\infty $ and therefore $ \sigma(a) = \sum_{n=m}^\infty  \sigma (a_nt^\frac{n}{n_a}) $.
\end{proof}

The field $ \bbP $ is henselian \cite[Lemma~10.1]{Kuhlmann2000}, so 
Theorem~\ref{hensel-char0-str-add:thm} applies. Combining this with Proposition~\ref{prop:puiseux-str-lin} we get

\begin{theorem}
	Let $ k $ be a real closed field. Then
	\begin{align}
		\vAut_{(k)}\bbP&\simeq \left((1+I_\bbP,\times_{s})\rtimes k^\times\right)\rtimes(\Aut k\times (\Q^{>0},\cdot))\\
		\vAut_{k}\bbP&\simeq \left((1+I_\bbP,\times_{s})\rtimes k^\times\right)\rtimes (\Q^{>0},\cdot)
	\end{align}\qed
\end{theorem}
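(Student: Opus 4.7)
The plan is to specialize Theorem~\ref{hensel-char0-str-add:thm} to the case $d = 1$ (i.e.\ $G = \Q$) and then use Proposition~\ref{prop:puiseux-str-lin} to strip the strong-additivity superscripts.

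First I would verify the hypotheses of Theorem~\ref{hensel-char0-str-add:thm}: $k$ is real closed by assumption; $G = \Q = \Q^1$ is (trivially) lexicographically ordered; the field $\bbP$ is henselian by \cite[Lemma~10.1]{Kuhlmann2000}; and $\bbP$ is a Rayner field (Example~\ref{es:rayner-puiseux}), so Lemma~\ref{rayner-closed-G-Exp} and Corollary~\ref{canonical-puiseux-kappa} supply the canonical first and canonical second lifting properties. Specialising Theorem~\ref{hensel-char0-str-add:thm} to $d = 1$ then yields
\begin{align*}
\vAut^{\Str}_{(k)}\bbP &\simeq \bigl(((1+I_\bbP)^{1+},\times) \rtimes k^\times\bigr) \rtimes (\Aut k \times \mathrm{UPT}_1(\Q)),\\
\vAut^{\Str}_{k}\bbP &\simeq \bigl(((1+I_\bbP)^{1+},\times) \rtimes k^\times\bigr) \rtimes \mathrm{UPT}_1(\Q).
\end{align*}

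Three reductions then finish the proof. Firstly, $\mathrm{UPT}_1(\Q) = (\Q^{>0},\cdot)$ directly from the definition of upper unitriangular matrices with positive diagonal. Secondly, by Proposition~\ref{prop:puiseux-str-lin} we have $\vAut\bbP = \vAut^{\Str}\bbP$, so intersecting with $\vAut_{(k)}\bbP$ and $\vAut_k\bbP$ removes the strong-additivity superscripts. Thirdly, I would identify $((1+I_\bbP)^{1+},\times)$ with $(1+I_\bbP,\times_s)$, where $\times_s$ is defined by exactly the same procedure as in Equation~\eqref{eq:times-s} but with $\Z$ replaced by $\Q$. For this, following the template of Lemma~\ref{Hom(Z,U)isoU}, unique divisibility of $1+I_\bbP$ (from the proposition preceding Corollary~\ref{henselian-1+I_K-uni-div:cor}) makes every group homomorphism $\Q \to 1+I_\bbP$ automatically $\Q$-linear, hence determined by its value at $1$, so evaluation at $1$ produces a group isomorphism $\theta\colon \Hom(\Q,1+I_\bbP) \to 1+I_\bbP$. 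I would then check that every such homomorphism is $\bbP$-summable (so $\Hom^+(\Q,1+I_\bbP) = \Hom(\Q,1+I_\bbP)$) via a Puiseux-to-Laurent reduction: for fixed $a = \sum a_q t^q \in \bbP$ with common denominator $n_a$, set $s = t^{1/n_a}$, view $a$ as a Laurent series in $s$, and apply Neumann's lemma as in Lemma~\ref{Hom(Z,U)-is-summable:lemma}. Finally, transporting $\times$ across $\theta$ defines $\times_s$ on $1+I_\bbP$.

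The main obstacle I expect is the summability check in the last step: for an arbitrary 1-unit $u \in 1+I_\bbP$ and fractional $q \in \Q$, the power $u^q$ can have a complicated support in $\Q$, and one must verify that $\Supp\{a_q u^q t^q : q \in \supp a\}$ is well ordered with each exponent hit only finitely often. The Puiseux-to-Laurent embedding handles this cleanly, but requires coherence of the fractional powers (using $u^{p/n} = (u^{1/n})^p$, which is well defined by unique divisibility) so that the resulting sum indeed lies in $\bbP$ rather than merely in $k\pow{\Q}$.
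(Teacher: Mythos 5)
Your proposal follows essentially the same route as the paper, which simply invokes the henselianity of $\bbP$ to apply Theorem~\ref{hensel-char0-str-add:thm} with $d=1$ and then combines it with Proposition~\ref{prop:puiseux-str-lin}. The extra care you take in identifying $((1+I_\bbP)^{1+},\times)$ with $(1+I_\bbP,\times_s)$ --- i.e.\ checking via the Puiseux-to-Laurent reduction and Neumann's lemma that every homomorphism $\Q\to 1+I_\bbP$ is $\bbP$-summable --- addresses a step the paper leaves implicit, and your outline of that check is sound.
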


The case where $ k $ is an algebraically closed field of characteristic 0 was treated by
Deschamps \cite[Th\'eor\`eme 10]{deschamps:puiseux}.
Under this assumption, he proves that $ \uAut_k\bbP $ and $ \gexp{\Q}\bbP $ can be described, respectively, as $ \uAut_k\bbP \simeq \varinjlim (1+I_\bbP) $ and $ \gexp{\Q}k \simeq \varprojlim k^\times $, where the limits are taken over the directed system given by the natural numbers with divisibility.

%
%

\printbibliography
\end{document}